\newcommand{\matjaz}[1]{\todo[size=\tiny,inline,color=yellow!30]{#1 \\ \hfill --- M.}}
\newcommand{\sara}[1]{\todo[size=\tiny,inline,color=blue!30]{#1 \\
\hfill --- S.}}
\newcommand{\N}{{\mathbb N}}
\newcommand{\R}{{\mathbb R}}
\numberwithin{equation}{section}
\newtheorem{theorem}{Theorem}[section]
\newtheorem{prop}[theorem]{Proposition}
\newtheorem{cor}[theorem]{Corollary}
\newtheorem{lemma}[theorem]{Lemma}
\newtheorem{open}[theorem]{Open problem}
\theoremstyle{definition}
\newtheorem{defin}[theorem]{Definition}
\newtheorem{remark}[theorem]{Remark}
\newtheorem{example}[theorem]{Example}
\newcommand{\floor}[1]{\lfloor {#1} \rfloor}
\newcommand{\ceiling}[1]{\lceil {#1} \rceil}
\newcommand{\longto}{\longrightarrow}
\newcommand{\ASM}{\operatorname{ASM}}
\newcommand{\CSM}{\operatorname{CSM}}
\newcommand{\MT}{\operatorname{MT}}
\newcommand{\id}{\operatorname{id}}
\newcommand{\quilt}{\operatorname{Quilts}}
\DeclareMathOperator{\rk}{quiltrank}
\newcommand{\rank}{\operatorname{rank}}
\newcommand{\Fullrk}{\mathcal{F}_{k \times n}(\mathbb{C})}
\newcommand{\given}{\, | \,}
\newcommand{\ber}{b}
\title[Generalized Rank Functions
and Quilts of Alternating Sign
Matrices]{Generalized Rank Functions \\
and Quilts of Alternating Sign Matrices}
\date{\today}
\author{Sara C. Billey$^{1}$}
\address{1.Department of Mathematics, University of Washington, Seattle, WA, US}
\email{billey@math.washington.edu}
\author{Matja\v z Konvalinka$^{2}$}
\address{2.Faculty of Mathematics and Physics, University of Ljubljana \& Institute for Mathematics, Physics and Mechanics, Ljubljana, Slovenia}
\email{matjaz.konvalinka@fmf.uni-lj.si}
\begin{document}

\begin{abstract}
In this paper, we present new objects, \emph{quilts of alternating
sign matrices} with respect to two given posets.
Quilts generalize several commonly used concepts in mathematics.  For
example, the rank function on submatrices of a matrix gives rise to a
quilt with respect to two Boolean lattices.  
When the two posets are chains, a quilt is equivalent to an
alternating sign matrix and its corresponding corner sum matrix.  Quilts
also generalize the monotone Boolean functions counted by the Dedekind
numbers.  Quilts form a distributive lattice with many beautiful
properties and contain many classical and well-known sublattices, such
as the lattice of matroids of a given rank and ground set.  While
enumerating quilts is hard in general, we prove two major enumerative
results, when one of the posets is an antichain and when one of them
is a chain. We also give some bounds for the number of quilts when one
poset is the Boolean lattice.
\end{abstract}

\maketitle

\section{Introduction} \label{sec:intro}

The \textit{rank} of a matrix is fundamental in mathematics, science,
and engineering.  The notion of rank can also be associated to graphs,
matroids, and partial orders.  One can refine the rank function to
submatrices, subgraphs, etc.\ as well to get a family of ranks to
associate to each object.  We observe that such families always follow
certain Boolean growth rules leading to the concept of a generalized
rank function, which we call a quilt.  The goal of this paper is
to consider families of generalized rank functions and their
connection with the well-studied alternating sign matrices (ASMs). We
present some applications and some related enumerative questions.

One motivating example of generalized rank functions comes from the
Bruhat decomposition of the general linear group, permutations, and
the geometry of flag manifolds.  Given a matrix $M \in GL_{n}$, let
$\rank_{M}(i,j)$ be the rank of the submatrix of $M$ on rows
$[i,n]=\{i,i+1,\dotsc ,n \} \subseteq [n]$ and columns
$[j]=\{1,2,\dotsc ,j \} \subseteq [n]$.  We call the $n \times n$
matrix of values $\rank_{M}(i,j)$ for $i,j\in [n]$ the
\textit{southwest rank table} of $M$.  The southwest rank tables of
all $n\times n$ invertible matrices can be classified by permutations
in the symmetric group $S_{n}$ and their associated permutation
matrices.  To find the permutation matrix to associate to $M$ with the
same southwest rank table, simply apply all possible elementary column
reduction moves from left to right and elementary row reductions from
bottom to top.  These moves preserve the southwest rank of the matrix
and, since $M$ is invertible, will eventually terminate with a single
nonzero entry in each row and column. Finally, rescale each entry to
be $1$ to obtain the associated permutation matrix.

Bruhat order on the symmetric group $S_{n}$ is the partial order given
by $v\leq w$ if and only if $\rank_{M(v)}(i,j)\leq \rank_{M(w)}(i,j)$ for all
$1\leq i,j\leq n$, where $M(v),M(w)$ are the permutation matrices corresponding
to $v,w$.  Bruhat order also determines the partial order on
permutations given by containment of Schubert varieties in the flag
manifold \cite[Ch.~9]{Fulton-book}.  One observes from the Hasse
diagram of $S_{3}$ that Bruhat order is not a lattice, as a partial
order.  In general, the \textit{Dedekind--MacNeille} completion of a
finite poset is a lattice which contains the poset and is isomorphic
to some subposet of any other lattice containing it.  Lascoux and
Sch\"utzenberger proved that the Dedekind--MacNeille completion of
Bruhat order to a lattice is given by a natural partial order on
square alternating sign matrices (ASMs)
\cite{LS-MacNeille,lascoux2008chern}.  The (square) alternating sign
matrices were defined by Mills, Robbins and Rumsey to be the $n \times
n$ matrices with entries from the set $\{-1,0,1 \}$ such that the
nonzero entries in each row and each column alternate between $1$ and
$-1$ and must both start and end with $1$
\cite{Mills-Robbins-Rumsey.ASMS,Robbins-Rumsey}. They arise in the
process of computing a determinant using Dodgson condensation.  By
definition, permutation matrices are examples of ASMs.

The partial order on $n \times n$ ASMs generalizing Bruhat order is
given by $A=(a_{ij})\leq B=(b_{ij})$ if and only if the corresponding
entries in the \textit{southwest corner sum matrices} (CSMs) are increasing,
\begin{equation}\label{eq:nw.sum.test}
\sum_{i\leq  i'\leq n, 1\leq j'\leq j}a_{i'j'} \leq \sum_{i\leq
i'\leq n, 1\leq j'\leq j}b_{i'j'} \quad \mbox{for all } 1 \leq i,j \leq n.
\end{equation}
One can show that the ASM poset is a lattice with meet and join given by
taking the entry-wise min and max in the corresponding corner sum
matrices, and every such  matrix does indeed correspond to an
ASM. See \Cref{sec:background} for more details. Since the corner sum
matrix of a permutation matrix is exactly its southwest rank table, Bruhat order
on $S_{n}$ embeds into the ASM lattice.

There are exactly $\prod_{j=0}^{n-1}(3j+1)!/(n+j)! $ alternating sign
matrices of size $n \times n$.  This result was conjectured by Mills,
Robbins and Rumsey and proved first by Zeilberger
\cite{zeilberger-ASM}, and further established independently by
Kuperberg and Fischer \cite{kuperberg-ASM,Fischer.2005}.  It was
notoriously difficult to prove the enumeration formula for square
ASMs, and no simple formula for the number of rectangular ASMs is
known. 

In this paper, we define new objects, \emph{quilts of alternating sign
matrices}, corresponding to two ranked partially ordered sets with greatest and least elements. They are a
generalization of rectangular alternating sign matrices and their
corner sum matrices.  For example, the southwest rank table of a
matrix in $GL_{n}$ corresponds to a quilt on two copies of the chain
$C_{n}$.  Also, if $B_{n}$ is the Boolean lattice of subsets of $[n]$
ordered by inclusion and $M$ is a $k \times n$ matrix of full rank (i.e., the rank is $\min\{k,n\}$), the function
$f_{M}:B_{k} \times B_{n} \longrightarrow \mathbb{N}$ given by setting
$f_{M}(I,J)$ to be the rank of the submatrix of $M$ in rows $I$ and
columns $J$ is a quilt of type $(B_{k}, B_{n})$.  See
\Cref{ex:all.minors.quil}.

Given the complexity of enumerating rectangular ASMs, it is surprising
that we are still able to say quite a bit about the enumeration of
quilts, especially when one of the two posets is an antichain (see \Cref{thm:antichain}) or a
chain. One of our main results is the following theorem, more precisely
stated as \Cref{thm:chainenumeration}. Here $S(P)$ is the set of standard quilts,
defined in Section~\ref{sec:chain}.

\begin{theorem}
 The number of quilts of type $(P,C_n)$ for $n \geq \rank P$ is a polynomial of degree $b(P) = \sum_{x \in P} \rank x$. Furthermore, the leading coefficient is $\frac{|S(P)|}{b(P)!}$.
\end{theorem}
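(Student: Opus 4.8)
The plan is to translate a quilt of type $(P,C_n)$ into lattice-point data and then apply order-polynomial (equivalently, Ehrhart) theory. Fixing $P$, I would think of the chain $C_n$ as indexing the steps $1,2,\dots,n$, and restrict the quilt to each element $x \in P$: this gives a weakly increasing, unit-step integer function along the chain. The growth rules should force each such function to rise by exactly $\rank x$ in total, so recording for each $x$ the $\rank x$ positions in $\{1,\dots,n\}$ at which its value jumps produces a tuple of $\sum_{x \in P}\rank x = b(P)$ weakly ordered coordinates. The monotonicity of the quilt across $P$, together with the compatibility (alternating-sign) conditions between neighbours in $P$, should translate precisely into order relations among these jump coordinates. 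I would package these relations as a single poset $\widehat P$ on $b(P)$ elements, one for each unit of rank, so that quilts of type $(P,C_n)$ are in bijection with the order-preserving maps $\widehat P \to \{1,\dots,n\}$, equivalently with the lattice points of the $n$-th dilate of the order polytope $\mathcal O(\widehat P)$. The hypothesis $n \ge \rank P$ is what I expect to guarantee that every legal placement of jumps is actually realizable, so that the correspondence is a genuine bijection and not merely a surjection.

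Granting this encoding, the two claims follow from standard facts. The number of order-preserving maps of a $d$-element poset into an $n$-chain is its order polynomial, a polynomial in $n$ of degree $d$; here $d = |\widehat P| = b(P)$, which gives polynomiality and the asserted degree. Equivalently, by Ehrhart's theorem the count is the Ehrhart polynomial of $\mathcal O(\widehat P)$, whose degree equals $\dim \mathcal O(\widehat P) = b(P)$. For the leading coefficient I would use that the top Ehrhart coefficient is the volume of the polytope, and that $\mathcal O(\widehat P)$ admits the canonical unimodular triangulation into simplices indexed by the linear extensions of $\widehat P$; hence its volume is $e(\widehat P)/b(P)!$, where $e(\widehat P)$ is the number of linear extensions. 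The final step is to identify $e(\widehat P)$ with $|S(P)|$: a linear extension of $\widehat P$ is exactly a prescription for performing all $b(P)$ jumps one at a time in a legal order, which should be precisely the combinatorial content of a standard quilt, so $e(\widehat P) = |S(P)|$ and the leading coefficient is $|S(P)|/b(P)!$.

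The main obstacle I anticipate is the first step: proving that the quilt axioms translate \emph{exactly} into the order relations defining $\widehat P$, with no missing or spurious inequalities. Concretely, I must check that the neighbour compatibility conditions impose only inequalities \emph{between} jump coordinates, so that the feasible region is genuinely an order polytope rather than a more general polytope that might have smaller dimension or non-unimodular facets, and I must verify that the resulting poset has exactly $b(P)$ elements with linear extensions enumerated by $S(P)$. Ensuring that no constraint forces the dimension below $b(P)$ — that is, that all $b(P)$ jump coordinates can vary independently once the chain is long enough — is what pins down both the degree and the leading term, and is exactly where the hypothesis $n \ge \rank P$ must be invoked with care.
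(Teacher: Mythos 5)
Your route is genuinely different from the paper's. The paper argues by a direct compression: given $f \in \quilt(P,C_n)$ with $n \ge \rank P$, it records the union of all jump sets $\{i_1 < \dots < i_m\} \subseteq [n]$ and replaces each $i_j$ by $j$, producing an $m$-fundamental quilt; each $m$-fundamental quilt has exactly $\binom{n}{m}$ preimages, giving the exact identity $|\quilt(P,C_n)| = \sum_{m=\rank P}^{b(P)} |F_m(P)|\binom{n}{m}$, from which the degree and leading coefficient are read off (using the separately proved fact that $S(P)=F_{b(P)}(P)\neq\emptyset$). Your Ehrhart/order-polynomial argument reaches the same asymptotics with heavier machinery but yields less: the paper's proof produces the whole polynomial in the binomial basis, which is what \Cref{thm:mt} and the Appendix computations actually exploit.

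There is, however, a concrete inaccuracy in your central claim that quilts biject with order-preserving maps $\widehat P \to [n]$. The constraints on the jump coordinates are mixed strict/weak: within a single $x\in P$ the jump positions satisfy $s^x_1 < s^x_2 < \dots < s^x_{\rank x}$ \emph{strictly} (equality would violate Boolean growth), while the interlacing constraints across a cover $x\lessdot y$ are weak, $t^y_i \le s^x_i \le t^y_{i+1}$. Already for $P=C_2$ this matters: $|\quilt(C_2,C_n)| = \binom{n+2}{3}-n$, whereas the order polynomial of the underlying $3$-chain is $\binom{n+2}{3}$. The repair is standard — work with Stanley's $(P,\omega)$-partitions for a labeled poset with some strict edges; the counting function is still a polynomial of degree $b(P)$ with leading coefficient $e(\widehat P)/b(P)!$, and linear extensions still match standard quilts because on distinct values the weak and strict versions of each relation coincide. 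You also still owe the two verifications you flagged yourself: that a tuple satisfying exactly these pairwise inequalities reassembles into a quilt (the converse direction of \Cref{prop:interlacing}), and that no cycle of weak inequalities forces two coordinates to coincide — the latter is precisely what the paper's explicit construction of a standard quilt supplies, and without it the degree could a priori drop below $b(P)$.
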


The following is an easy consequence of the theorem and the
hook-length formula for shifted standard tableaux.  To the best of our
knowledge, this is a new observation for rectangular ASMs.

\begin{cor}
 The number of rectangular ASMs of size $k \times n$, for $n \geq k$,
 is a polynomial in $n$ of degree ${\binom{k+1}2}$ with leading
 coefficient $\prod_{i=0}^{k-1}\frac{ (2i)!}{(k + i)!}$.
 \end{cor}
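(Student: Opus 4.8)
The plan is to specialize \Cref{thm:chainenumeration} to the case $P = C_k$. As noted in the introduction and developed in \Cref{sec:background}, a quilt of type $(C_k,C_n)$ is precisely a $k \times n$ alternating sign matrix together with its corner sum matrix, so counting $k \times n$ ASMs is the same as counting quilts of type $(C_k,C_n)$. Taking $C_k$ to be the chain with $\rank C_k = k$ (ranks $0,1,\dots,k$), the hypothesis $n \ge \rank P$ becomes $n \ge k$, and
\[
 b(C_k) = \sum_{x \in C_k} \rank x = 0 + 1 + \dots + k = \binom{k+1}{2}.
\]
\Cref{thm:chainenumeration} then immediately yields that the number of $k \times n$ ASMs is a polynomial in $n$ of degree $\binom{k+1}{2}$ whose leading coefficient is $|S(C_k)|/\binom{k+1}{2}!$\,.

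It remains only to evaluate $|S(C_k)|$ and to simplify this leading coefficient. The key step is to recognize the set $S(C_k)$ of standard quilts of the chain $C_k$ as a classical object: I expect $S(C_k)$ to be in bijection with the standard Young tableaux of shifted staircase shape $\delta_k = (k,k-1,\dots,1)$, which has exactly $\binom{k+1}{2}$ cells. The growth conditions defining a standard quilt over a chain should translate directly into the row- and column-increasing conditions on a shifted tableau of the staircase, with the diagonal cells of the shifted diagram accounting for the successive rank jumps.

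Granting this identification, I would apply the hook-length formula for shifted standard tableaux to obtain
\[
 |S(C_k)| = \frac{\binom{k+1}{2}!}{\prod_{c \in \delta_k} h^*(c)},
\]
where $h^*(c)$ denotes the shifted hook length of the cell $c$. Evaluating the product of shifted hook lengths of $\delta_k$ and dividing by $\binom{k+1}{2}!$ reduces the leading coefficient to $\prod_{i=0}^{k-1} (2i)!/(k+i)!$; this last reduction is a routine factorial manipulation (equivalently, one may use Schur's product formula $|S(C_k)| = \binom{k+1}{2}!\,\prod_i \frac{1}{\lambda_i!}\prod_{i<j}\frac{\lambda_i-\lambda_j}{\lambda_i+\lambda_j}$ with $\lambda = \delta_k$ and simplify). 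As a sanity check, for $k=1,2,3$ one finds $|S(C_k)| = 1,1,2$ and leading coefficients $1,\ \tfrac16,\ \tfrac1{360}$, matching the known counts of $1 \times n$, $2 \times n$, and $3 \times n$ ASMs.

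The main obstacle is the combinatorial identification of $S(C_k)$ with shifted staircase tableaux; once this bijection is established, the degree and the leading coefficient follow from \Cref{thm:chainenumeration} together with a direct hook-length computation.
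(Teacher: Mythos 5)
Your proposal is correct and follows essentially the same route as the paper: the paper's Example~\ref{ex:rectangularASMs} likewise specializes Theorem~\ref{thm:chainenumeration} to $P=C_k$, identifies standard quilts for $C_k$ (viewed as monotone triangles containing all of $1,\dots,\binom{k+1}2$) with shifted standard Young tableaux of staircase shape $(k,k-1,\dots,1)$ via a reflection and $45^\circ$ rotation, and applies the shifted hook-length formula to obtain the leading coefficient $\prod_{i=0}^{k-1}(2i)!/(k+i)!$. The bijection you flag as the remaining obstacle is exactly the (easy) identification the paper uses, and your sanity checks for $k=1,2,3$ agree with the paper's data.
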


In general, it is rare to find exact formulas for quilt enumeration.
We show the problem of counting quilts on two arbitrary posets is
\#P-complete by a reduction to the enumeration of antichains, see
Theorem~\ref{thm:complexity}.  The quilts form a distributive lattice
with a number of beautiful properties, see Sections~ \ref{sec:def}
and~\ref{sec:lattice}.  They generalize the notions of matroids and
flag matroids, as we will show in \Cref{sec:lattice}.  As a precursor
to defining quilts, we also define Dedekind maps of posets
generalizing the monotone Boolean functions on $B_{n}$ counted by the
Dedekind numbers.  These numbers also count the number of antichains
in $B_{n}$.
 
Our main application of quilts is to an embedding of a partial order
on Fubini words (equivalently Cayley permutations or ordered set
partitions) into the lattice of quilts of type $(C_{k}, B_{n})$.  This
partial order on Fubini words arose in the context of a generalization
of southwest rank tables for decomposing rectangular matrices based on
the geometry of spanning line configurations due to Pawlowski and
Rhoades \cite{PR}.

The literature on alternating sign matrices and matroids is vast, and
we are certain that quilts hide many riches way beyond the scope of
this paper.  The paper is structured as follows. In
Section~\ref{sec:background}, we give some standard definitions about
partially ordered sets, alternating sign matrices, and matroids. In
Section~\ref{sec:def}, we define our main objects, \emph{Dedekind
maps} and   \emph{quilts} of
alternating sign matrices of type $(P,Q)$ for $P$ and $Q$ finite
ranked posets with least and greatest elements, and we develop some of
their basic properties. When $Q$ is a chain, we can view a quilt as a
filling of the poset $P$ with interlacing sets generalizing the notion
of a monotone triangle, see \Cref{prop:interlacing}.  In
Section~\ref{sec:motivation}, we explain how our definition of quilts
was motivated via the medium roast order on Fubini words. In
Section~\ref{sec:lattice} we prove some interesting properties of the
quilt lattice.  For example, there is a natural bijection between
quilts of type $(P,Q)$ and $(Q,P)$. In Sections~\ref{sec:antichain}
and~\ref{sec:chain}, we present our main enumerative results. We show
that the number of quilts when $Q$ is an antichain with $j$ elements
(and added least and greatest elements) is exponential in $j$, and
that it is a polynomial in $n$ when $Q$ is a chain of rank $n$. The
results also give asymptotic formulas in both cases. In
Section~\ref{sec:boolean}, we give some bounds for the number of
quilts when $Q$ is a Boolean lattice. In Section~\ref{sec:final}, we
point out some possible future research directions. In Appendix A, we
give some of the more unwieldy and computationally intensive
results. In Appendix B, we give some terms of the newly identified
integer sequences related to quilts.

\section{Background} \label{sec:background}

In this section, we lay out some of the standard notation for
permutations, posets, and alternating sign matrices.  See \cite{ec1}
for more information.

\subsection{Posets and lattices}

Let us write $\N$ for the set of non-negative integers. For $n \in
\N$, let $[n]$ be the set $\{1,\ldots,n\}$; in particular, $[0]=\emptyset$. Recall that a partially
ordered set (poset) is a set $P$ with a reflexive, antisymmetric and
transitive relation $\leq$. We denote by $[x,y]$ the \emph{interval}
between $x$ and $y$ for $x \leq y$. We say that $y$ \emph{covers} $x$,
denoted $x \lessdot y$, if $x < y$ and there is no $z$ satisfying $x <
z < y$. A \emph{chain} is a set of comparable elements in $P$, and an
\emph{antichain} is a set of incomparable elements. A chain is
\emph{maximal} if it is not contained in a larger chain. A poset is
\emph{ranked} if all maximal chains have the same size.   A ranked poset $P$
with least element $\hat 0_P$ and greatest element $\hat 1_P$ has a
\emph{rank function}, which is a map $\rank \colon P \to \N$ satisfying $\rank
\hat 0_P = 0$ and $x \lessdot y \Rightarrow \rank y = \rank x + 1$. We
define $\rank P = \rank \hat 1_P$, and we write
$b(P) = \sum_{x \in P} \rank x$ for the \textit{sum of ranks} of $P$.
We omit the subscript in $\hat 0_P$ and $\hat 1_P$ if the poset is clear from the context.

\textbf{Every poset in this paper is finite, ranked, and has the least
element $\hat 0$ and the greatest element $\hat 1$.}  The elements
covering $\hat 0$ are called \emph{atoms}, and the elements covered by
$\hat 1$ are \emph{coatoms}.  We assume that $P$ comes
with a fixed total ordering of the elements that respects rank: the element
$\hat 0=\hat 0_P$ comes first, then the atoms in some order, then the
elements of rank $2$, etc.

A poset is a \emph{lattice} if every two elements $x,y$ have a unique
greatest common lower bound (meet) $x \wedge y$ and a unique least common upper
bound (join) $x \vee y$. A lattice is \emph{distributive} if $(x \vee y)
\wedge z = (x \wedge z) \vee (y \wedge z)$ and $(x \wedge y) \vee z =
(x \vee z) \wedge (y \vee z)$ for all $x,y,z$. Some important examples
of  lattices that we will consider are the following:
\begin{itemize}
  \item $C_n$ for $n \geq 0$ is the poset $\{0,1,\ldots,n\}$ with the
  usual order $\leq$ (the \emph{chain} of rank $n$);
 \item $A_2(j)$ for $j \geq 1$ is the poset with $\hat 0$, $\hat 1$, and $j$ other elements that are incomparable with each other (the \emph{antichain} of $j$ elements, with $\hat 0$ and $\hat 1$ added);
 \item $B_n$ for $n \geq 1$ is the poset of subsets of $[n]$ ordered
 by inclusion (the \emph{Boolean lattice} of rank $n$).
\end{itemize}
Both $B_{n}$ and $C_{n}$ are distributive lattices, but $A_{2}(j)$ is
not distributive for $j>2$.  Note that the subscript always denotes
the rank of the poset. See \Cref{fig:c.a.b} for examples.

\begin{figure}[h]
\begin{center}
 \begin{tikzpicture}

\node (Z0) at (-8,0) {$0$};
\draw (-8,0.25) -- (-8,0.75);
\node (Z1) at (-8,1) {$1$};
\draw (-8,1.25) -- (-8,1.75);
\node (Z2) at (-8,2) {$2$};
\draw (-8,2.25) -- (-8,2.75);
\node (Z3) at (-8,3) {$3$};
\node (Y0) at (-4,0.5) {$\hat 0$};
\draw (-4.25,0.75) -- (-4.75,1.25);
\draw (-4,0.75) -- (-4,1.25);
\draw (-3.75,0.75) -- (-3.25,1.25);
\node (Y1) at (-5,1.5) {$x_1$};
\node (Y2) at (-4,1.5) {$x_2$};
\node (Y3) at (-3,1.5) {$x_3$};
\draw (-4.75,1.75) -- (-4.25,2.25);
\draw (-4,1.75) -- (-4,2.25);
\draw (-3.25,1.75) -- (-3.75,2.25);
\node (Y4) at (-4,2.5) {$\hat 1$};
\node (X0) at (0,0) {$\emptyset$};
\draw (-0.25,0.25) -- (-0.75,0.75);\draw (0,0.25) -- (0,0.75);\draw (0.25,0.25) -- (0.75,0.75);
\node (X11) at (-1,1) {$1$}; \node (X12) at (0,1) {$2$}; \node (X13) at (1,1) {$3$};
\draw (-1,1.25) -- (-1,1.75);\draw (-0.75,1.25) -- (-0.25,1.75);\draw (-0.25,1.25) -- (-0.75,1.75);\draw (0.25,1.25) -- (0.75,1.75);\draw (0.75,1.25) -- (0.25,1.75);\draw (1,1.25) -- (1,1.75);
\node (X21) at (-1,2) {$12$}; \node (X12) at (0,2) {$13$}; \node (X13) at (1,2) {$23$};
\draw (-0.25,2.75) -- (-0.75,2.25);\draw (0,2.75) -- (0,2.25);\draw (0.25,2.75) -- (0.75,2.25);
\node (X3) at (0,3) {$123$};
\end{tikzpicture}
\end{center}
\caption{Hasse diagrams for $C_3$, $A_2(3)$ and $B_3$.}
\label{fig:c.a.b}
\end{figure}
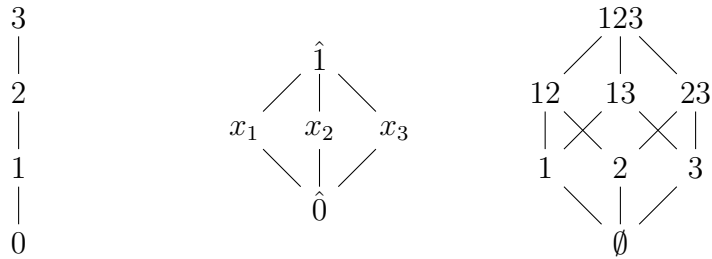

For posets $P$ and $Q$, the \textit{Cartesian product} $P \times Q$
has cover relations $(x,y) \lessdot (x',y)$ for $x \lessdot x'$ and
$(x,y) \lessdot (x,y')$ for $y \lessdot y'$.  For example, $B_{2}$ is
isomorphic to $C_{1}\times C_{1}$, and $B_{n}$ to $C_{1}^{n}$.  See
\cite[\S 3]{ec1} for more details on products and sums of posets.

\subsection{Alternating sign matrices and Fubini
words}\label{sub:asm.fubini}

An \emph{alternating sign matrix} (or ASM for short) is a matrix of
size $k \times n$ with entries in $\{-1,0,1\}$ such that:
\begin{itemize}
 \item in each row and each column the non-zero entries alternate,
 \item the leftmost non-zero entry in every row and the bottommost non-zero entry in every column is $1$,
 \item if $k \leq n$, there are no zero rows, and the rightmost non-zero entry in every row is $1$, and
 \item if $k \geq n$, there are no zero columns, and the topmost non-zero entry in every column is $1$.
\end{itemize}
 In particular, if $n = k$, the non-zero entries of every row and
 every column alternate and begin and end with $1$. Note that what we
 call an ASM is typically called a \emph{rectangular} or
 \emph{truncated} ASM in the literature; we will instead emphasize
 that we have a \emph{square} ASM when $n = k$. Denote the set of all
 ASMs of size $k \times n$ by $\ASM_{k,n}$. As mentioned in the
 introduction, a very famous result tells us that
 $$|\ASM_{n,n}| = \prod_{j=0}^{n-1} \frac{(3j+1)!}{(n+j)!}.$$
 See \cite{ASMs} for more background.

For example, every permutation matrix is a square ASM.  To spell out
some notation, a permutation in $S_{n}$ is a bijection $w:[n]
\longrightarrow [n]$, which can be denoted simply by its one-line
notation $w=w(1)w(2)\cdots w(n).$ The permutation matrix corresponding
with $w$ has a 1 in position $(w_{j},j)$ for each $j \in [n]$ and 0's
elsewhere.  More generally, a \textit{Fubini word} $w$ is a surjective
map $w:[n]\longrightarrow [k]$, denoted by its one-line notation
$w=w(1)w(2)\cdots w(n)$.  Let $W_{n,k}$ denote all such Fubini words
for a fixed pair $1\leq k\leq n$.  The Fubini words in $W_{n,k}$ are
in natural bijection with ordered set partitions on $[n]$ into $k$
nonempty parts given by $w^{-1}(1)|w^{-1}(2)|\dots | w^{-1}(k)$, but
in this context it is helpful to think of them as a $k \times n$
generalization of a permutation matrix where again the matrix for $w$
has a 1 in position $(w_{j},j)$ for each $j \in [n]$ and 0's elsewhere.
Such matrices are \emph{not} examples of rectangular ASMs of size $k
\times n$ when $k<n$ because some row must have two 1's with no $-1$
between them.

\begin{remark}
We note that the nomenclature ``Fubini words'' comes from
\cite[A000670]{oeis}.  Others refer to the same words as ``Cayley
permutations'', ``packed words'', ``surjective words'', ``normal
patterns'', and ``initial words.'' We thank Anders Cleasson for this
list.  Some history of this terminology is given in
\cite{claesson2024patternavoidingcayleypermutationscombinatorial}.
\end{remark}

The following is an example of an ASM of size $5 \times 6$:
\begin{equation}\label{eq:anotherASM}
\begin{bmatrix}
0 & 1 & 0 & -1 & 1 & 0 \\
1 & -1 & 0 & 1 & -1 & 1 \\
0 & 1  & 0 & -1 & 1 & 0 \\
0 & 0 & 0 & 1 & 0 & 0 \\
0 & 0 & 1 & 0 & 0 & 0
\end{bmatrix}.
\end{equation}
All ASMs of size $3 \times 3$ are
$$\begin{array}{ccccccc}
\begin{bmatrix} 1 & 0 & 0 \\ 0 & 1 & 0 \\ 0 & 0 & 1 \end{bmatrix} & \begin{bmatrix} 1 & 0 & 0 \\ 0 & 0 & 1 \\ 0 & 1 & 0 \end{bmatrix} & \begin{bmatrix} 0 & 1 & 0 \\ 1 & 0 & 0 \\ 0 & 0 & 1 \end{bmatrix} & \begin{bmatrix} 0 & 1 & 0 \\ 1 & \!-1\! & 1 \\ 0 & 1 & 0 \end{bmatrix} & \begin{bmatrix} 0 & 1 & 0 \\ 0 & 0 & 1 \\ 1 & 0 & 0 \end{bmatrix} & \begin{bmatrix} 0 & 0 & 1 \\ 1 & 0 & 0 \\ 0 & 1 & 0 \end{bmatrix} & \begin{bmatrix} 0 & 0 & 1 \\ 0 & 1 & 0 \\ 1 & 0 & 0 \end{bmatrix},
\end{array}$$
and we can get all ASMs of size $3 \times 2$ (resp., $2 \times 3$) by deleting the rightmost column (resp., the topmost row).

Note that reflecting across the vertical axis gives an involution on the set of ASMs of size $k \times n$ for $k \leq n$ (but not for $k > n$). When $n = k$, we have involutions coming from reflections across the horizontal and vertical axes and from the two diagonal transpositions of a matrix, as well as rotations by $90^\circ$, $180^\circ$ and $270^\circ$. This gives a faithful action of the dihedral group $D_4$ on the set of ASMs of size $n \times n$ for $n \geq 2$.

Given $A \in \ASM_{k,n}$, we can define a new matrix $C(A)$, called its
\emph{corner sum matrix}, or CSM for short, of size $(k+1) \times
(n+1)$ by adding a row and column of $0$'s below and to the left of
the matrix, and taking the sum of all entries weakly to the left and
weakly below a given entry. For example, the $5 \times 6$ ASM in
\eqref{eq:anotherASM} gives rise to the $6 \times 7$ CSM
\begin{equation}\label{eq:anotherCSM}
\begin{bmatrix}
0 & 1 & 2 & 3 & 3 & 4 & 5 \\
0 & 1 & 1 & 2 & 3 & 3 & 4 \\
0 & 0 & 1  & 2 & 2 & 3 & 3 \\
0 & 0 & 0 & 1 & 2 & 2 & 2 \\
0 & 0 & 0 & 1 & 1 & 1 & 1 \\
0 & 0 & 0 & 0 & 0 & 0 & 0
\end{bmatrix}.
\end{equation}

In the resulting matrix $C(A)$, the entries change by $0$ or $1$ when
moving to the right or up, and the bottom row and the leftmost column
always consist of $0$'s. Furthermore, if $k \geq n$, the top row
consists of $0,1,\ldots,n$, and if $k \leq n$, the rightmost column
consists of $0,1,\ldots,k$. Conversely, given a matrix $B$, with rows
numbered $0,1,\ldots,k$ and starting at the bottom, and columns
numbered $0,1,\ldots,n$ and starting on the left, satisfying these
properties, the $k \times n$ matrix $A=(a_{ij})$ given by
\[
a_{i,j} =b_{i,j}-b_{i-1,j}-b_{i,j-1}+b_{i-1,j-1}
\]
for all $i\in [k]$ and $j\in [n]$ is an ASM.  Therefore, we can
equivalently define CSMs directly as follows.

\begin{defin}\label{def:CSM}
A \emph{corner sum matrix (CSM)} of size $(k+1) \times (n+1)$ is a map
$$f \colon C_k \times C_n \longto \N$$
satisfying:
\begin{itemize}
 \item $f(i,j) = 0$ whenever $i =0$ or $j = 0 $,
 \item $f(k,n) = \min\{k,n\}$, and
 \item if $(i,j) \lessdot (i',j')$ in $C_k \times C_n$, then $f(i',j') \in \{f(i,j),f(i,j)+1\}$.
\end{itemize}
Let $\CSM_{k,n}$ denote the set of all CSMs of size $(k+1) \times (n+1)$.  We
refer to the third condition in the definition of a CSM as a
\textit{Boolean growth rule}, which is a central concept in this
paper.
\end{defin}

One more way to think of ASMs/CSMs is as monotone
triangles (MTs). Given a CSM $f \colon C_k \times C_n \longto \N$,
record the positions of \emph{jumps} in each row, denoted by
\begin{equation}\label{eq:jumps}
J_f(i) = \{j \in [n] \colon f(i,j) = f(i,j-1) + 1\}.
\end{equation}
Then, the \emph{monotone triangle} corresponding to $f$ is the
``triangular array'' of jump sequences with rows $J_{f}(k), \dotsc,
J_{f}(2), J_{f}(1)$ (from top to bottom).  The rows of the monotone triangle are
\emph{interlacing} in the sense that if $J_f(i)=\{s_{1}<s_{2}<\cdots <
s_{p} \}$ and $ J_f(i+1)= \{t_{1}<t_{2}<\cdots < t_{q}\}$ then either
$p= q - 1$ and
\begin{equation}\label{eq:interlacing.1}
t_1 \leq s_1 \leq t_2 \leq s_2 \leq \dots \leq s_{q-1}
\leq t_{q},
\end{equation}
or $p=q$ and
\begin{equation}\label{eq:interlacing.2}
t_1 \leq s_1 \leq t_2 \leq s_2 \leq \dots
\leq t_{q} \leq s_{q}.
\end{equation}
Clearly, the original CSM can be recovered
from its interlacing jump sets so there are easy bijections between
the ASMs, CSMs, and MTs for given $k,n$.

For the CSM in \eqref{eq:anotherCSM}, we get jump sequences $J_f(0) =
\emptyset$, $J_f(1) = \{3\}$, $J_f(2) = \{3,4\}$, $J_f(3) =
\{2,3,5\}$, $J_f(4) = \{1,3,4,6\}$, $J_f(5) = \{1,2,3,5,6\}$. This can
be presented in the monotone triangle
(MT) form as $$\begin{array}{ccccccccc} 1&&2&&3&&5&&6 \\ &1&&3&&4&&6& \\
&&2&&3&&5&& \\ &&&3&&4&&& \\ &&&&3.&&&& \end{array}
$$
Let $g$ be the CSM obtained by transposing \eqref{eq:anotherCSM} along the antidiagonal.
The jump sets are $J_g(0) = \emptyset$, $J_g(1) = \{4\}$, $J_g(2) =
\{3,5\}$, $J_g(3) = \{1,3,5\}$, $J_g(4) = \{1,2,4\}$, $J_g(5) =
\{1,2,3,5\}$, $J_g(6) = \{1,2,3,4,5\}$,
and the MT form
is $$\begin{array}{ccccccccc} 1&&2&&3&&4&&5 \\ &1&&2&&3&&5& \\
&&1&&2&&4&& \\ &&1&&3&&5&& \\ &&&3&&5&&& \\ &&&&4.&&&& \end{array}$$
Observe that $J_g(3)$ and $J_g(4)$ have the same size, so the MT form
is not a classical triangle of numbers.

\begin{remark}\label{rem:TerwilligerPoset}
Terwilliger introduced a poset $\Phi_{n}$ on the subsets of $[n]$ with
covering relations given by $S \lessdot T$ whenever $S$ and $T$ are
interlacing in the sense of \eqref{eq:interlacing.1}.  The poset
$\Phi_{n}$ contains the Boolean lattice $B_{n}$ as a subposet.  He
showed maximal chains in $\Phi_{n}$ are in bijection with $\ASM_{n,n}$,
just as the maximal chains of $B_{n}$ are in bijection with $S_{n}$
\cite[Thm. 3.4]{Terwilliger.2018}.  Building on this work, Hamaker and
Reiner \cite{hamakerreiner} showed that $\Phi_{n}$ is a shellable
poset, introduced a notion of descents for monotone triangles, and connected them to a
generalization of the Malvenuto--Reutenauer Hopf algebra of
permutations.  See \Cref{sub:generalizingASMs} for some follow up
questions in this direction.
\end{remark}

\subsection{Matroids and flag matroids}\label{sub:matroids}

A \emph{matroid} $M$ on ground set $[n]$ is determined by a rank
function $r:2^{[n]} \longrightarrow \mathbb{N}$ such that the following three conditions
hold:
\begin{enumerate}
\item (bounded by size) $0\leq r(X) \leq |X|$ for all $X \subseteq [n]$,
\item (monotonicity) $r(X)\leq r(Y)$ for all $X \subseteq Y$,
\item (submodularity) $r(X \cup Y) + r(X \cap Y) \leq r(X) + r(Y)$.
\end{enumerate}
The \emph{rank of $M$} is $r(M)=r([n])$.  For example, given a $k
\times n$ matrix with real entries, the rank function on the subsets
of columns of the matrix satisfies the three conditions above.

\begin{remark}\label{rem:matroid.dedekind.map}
Observe that if $i \in [n] \setminus X$, then $0\leq r(X \cup
\{i\})-r(X)$ by monotonicity.  By the bounded size property,
$r(\emptyset)=0$ and $r(\{i \})\leq 1$.  Hence, $r(X \cup
\{i\})-r(X)\leq r(\{i \})\leq 1$ by submodularity.  Hence, the rank
function of a matroid is surjective on $[0,r(M)]$ and also respects
the Boolean growth property: $r(X \cup \{i\}) \in \{r(X), r(X)+1 \}$.
\end{remark}

Let $M,N$ be matroids on $[n]$.  We say $N$ is a \emph{quotient} of
$M$ provided
\begin{equation}\label{eq:flag.matroid.rank.inequality}
r_M(Y)-r_M(X)\geq r_N(Y)-r_N(X)
\end{equation}
for all $X\subseteq Y \subseteq [n]$.  In particular, $r_M(Y)\geq r_N(Y)$ since
$r_M(\emptyset) = r_N(\emptyset)=0$.

A \emph{flag matroid} of type $(1\leq k_{1}<\cdots <
k_{s}\leq n)$ on $[n]$ is a collection of matroids
$\mathcal{M}=(M_{1}, M_{2},\dots , M_{s})$ on the ground set $[n]$
where the rank of $M_{j}$ is $k_{j}$ and $M_{j}$ is a quotient of
$M_{j+1}$ for each $1\leq j<s$.  For example, given a $k \times n$
matrix with complex entries, one can construct a flag matroid $\mathcal{M}=(M_{1}, M_{2},\dots , M_{s})$ where
$M_{i}$ for $1\leq i\leq k$ is the matroid with rank function defined by
the submatrix using only the top $i$ rows.

For more information on matroids and flag matroids, the standard
reference is the book by Oxley \cite{Oxley}.  A nice survey can be
found in \cite{CAMERON.DINU.MICHALEK.SEYNNAEVE}.

\section{Dedekind maps and quilts} \label{sec:def}

In this section, we introduce a generalization of the Dedekind
numbers, which count the number of monotone increasing Boolean
functions \cite[A000372]{oeis}.  Such functions are closely related to
the CSMs defined in \Cref{def:CSM} and are natural precursors to the
notion of a quilt defined later in this section.

\begin{defin}\label{def:Dedekin.map}
A \emph{Dedekind map of rank $k$} on $P$ is a surjective map $f \colon
P \to C_k$ satisfying $x \lessdot y \Rightarrow f(y) \in
\{f(x),f(x)+1\}$.  The set of all Dedekind maps of rank $k$ on $P$ is
denoted by $D_k(P)$, their union by $D(P)$, and we write $d_k(P) =
|D_k(P)|$ and $d(P) = |D(P)| = \sum_k d_k(P)$ for the \emph{$k$-Dedekind number
of $P$} and \emph{Dedekind number of $P$}, respectively.
\end{defin}

\begin{example}\label{ex:matroid.dedekind}
By \Cref{rem:matroid.dedekind.map}, the rank function of a matroid on
ground set $[n]$ of rank $k$ is a Dedekind map of rank $k$ on the
Boolean lattice $B_{n}$.  However, not every Dedekind map of rank $k$
is the rank function of a matroid of rank $k$.  In fact, any Dedekind
map on $B_{n}$ with $f(\{i\})=0$ for all $i\in [n]$ could not be the
rank function for a rank $k>0$ matroid on $[n]$ because submodularity
would be violated.  In general, there are far more Dedekind maps than
matroids. For example, there are only $7$ matroids on $[3]$ of rank
$2$, but there are $18$ Dedekind maps on $B_{3}$ of rank 2.
\end{example}

Observe that for a Dedekind map of rank $k$, $f \colon P \to C_k=[0,k]$, the following
three conditions are satisfied:
\begin{itemize}
\item $f(\hat 0) = 0$,
\item $f(\hat 1) = k$, and
\item if $x \lessdot y$, then $f(y) \in \{f(x),f(x)+1\}$ (Boolean growth).
\end{itemize}
Consequently, $f(x) \leq \rank x$ for all $x \in P$. The $k$-Dedekind
number of a chain is $d_k(C_n) = \binom n k$.  For the antichain poset
$A_{2}(j)$, we have
$$d_k(A_2(j)) = \left\{ \begin{array}{ccl} 1 & : & k = 0,2 \\ 2^j & : & k = 1 \\ 0 & : & k \geq 3. \end{array} \right. $$
Some values of $d_k(B_n)$ are given in the following table:

$$\begin{array}{c|cccccl}
   n \backslash k & 0 & 1 & 2 & 3 & 4 & 5  \\ \hline
   0 & 1 & 0 & 0 & 0 & 0 & 0 \\
   1 & 1 & 1 & 0 & 0 & 0 & 0 \\
   2 & 1 & 4 & 1 & 0 & 0 & 0 \\
   3 & 1 & 18 & 18 & 1 & 0 & 0 \\
   4 & 1 & 166 & 656 & 166 & 1 & 0 \\
   5 & 1 & 7579 & 189967 & 189967 & 7579 & 1.
  \end{array}$$
The first column determined by $d_1(B_n)$ for $n\geq 0$ is given by \cite[A007153]{oeis}.

\begin{remark}\label{rem:sharp.P}
Given $f \in D_1(P)$, the set of minimal elements satisfying $f(x) =
1$ is a non-empty antichain in $P \setminus \{\hat 0\}$; so $d_1(P)$
counts the number of antichains in $P$ (except for $\emptyset$ and
$\{\hat 0\}$), which is a \#P-complete problem \cite{antichains}.  See
also \cite{Sapozhenko} on antichain enumeration in ranked posets.

In particular, $d_1(B_n)+2$ is the classical Dedekind number and is
notoriously difficult to compute. The exact value for $n = 9$ was
first computed in 2023, thirty years after the value for $n=8$
\cite{JAKEL.2023}.  These numbers grow very quickly, $d_1(B_9) \approx
2.86 \cdot 10^{41}$.  See also \cite[A000372]{oeis} and
\cite{VanHirtum.dedekind9} for more history and an independent
computation of the $9$th Dedekind number.
\end{remark}

\begin{lemma} \label{lemma:dedekindupper}
 For any poset $P$ and $k \geq 1$, we have $d_k(P) \leq d_1(P)^k$.
\end{lemma}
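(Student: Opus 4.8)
The plan is to build an explicit injection $\Phi \colon D_k(P) \to D_1(P)^k$ by \emph{slicing} a rank-$k$ Dedekind map into $k$ rank-$1$ maps according to its level sets; the bound $d_k(P) \le d_1(P)^k$ then follows immediately from $|D_k(P)| \le |D_1(P)|^k$. Concretely, given $f \in D_k(P)$ and an index $i \in [k]$, I would define $g_i \colon P \to C_1$ by setting $g_i(x) = 1$ when $f(x) \ge i$ and $g_i(x) = 0$ otherwise, and then put $\Phi(f) = (g_1, \dots, g_k)$.

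The first step is to check that each slice $g_i$ genuinely lies in $D_1(P)$. Using the observed consequences of the definition, $f(\hat 0) = 0 < i \le k = f(\hat 1)$, so $g_i(\hat 0) = 0$ and $g_i(\hat 1) = 1$; in particular $g_i$ is surjective onto $C_1$. For the Boolean growth rule, suppose $x \lessdot y$. The growth rule for $f$ gives $f(y) \in \{f(x), f(x)+1\}$, hence $f(y) \ge f(x)$, so $f(x) \ge i$ forces $f(y) \ge i$; thus $g_i(y) \ge g_i(x)$. Since $g_i(x), g_i(y) \in \{0,1\}$, this monotonicity automatically yields $g_i(y) \in \{g_i(x), g_i(x)+1\}$, so $g_i \in D_1(P)$.

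The second step is injectivity of $\Phi$, which I would establish by reconstructing $f$ from its slices. For each $x \in P$ the set $\{i \in [k] : f(x) \ge i\}$ is exactly $\{1, \dots, f(x)\}$, so $f(x) = \sum_{i=1}^{k} g_i(x)$. Hence the tuple $(g_1, \dots, g_k)$ determines $f$ pointwise, and $\Phi$ is injective. Combining the two steps gives $d_k(P) = |D_k(P)| \le |D_1(P)|^k = d_1(P)^k$, as claimed.

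There is no serious obstacle in this argument; it is a clean level-set decomposition. The only point worth flagging is \emph{why} the result is an inequality rather than an equality: the map $\Phi$ is in general far from surjective, because an arbitrary tuple $(g_1, \dots, g_k) \in D_1(P)^k$ arises from some $f \in D_k(P)$ only when the slices are \emph{nested}, i.e.\ when the supports $\{x : g_i(x) = 1\}$ are weakly decreasing in $i$. A non-nested tuple does not assemble into a map satisfying the Boolean growth rule, so equality $d_k(P) = d_1(P)^k$ would require every tuple to be nested, which fails already for simple posets.
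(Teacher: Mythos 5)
Your proof is correct and is essentially the paper's argument: both decompose $f$ into its $k$ level sets and recover $f$ from the resulting tuple. The paper records each slice as the antichain of minimal elements of $\{x : f(x) = i\}$ (using that $d_1(P)$ counts such antichains), which under the bijection of Remark~\ref{rem:sharp.P} is exactly your rank-one map $g_i$, so the two injections coincide.
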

\begin{proof}
 For $f \in d_k(P)$ and $1 \leq i \leq k$, take $A_i$ to be the set of minimal elements of the set $\{ x \in P \colon f(x) = i\}$. Clearly, $A_i$ is an antichain, $A_i \neq \emptyset$, $A_i \neq \{\hat 0\}$; there are $d_1(P)$ such antichains. The map $f \mapsto (A_1,\ldots,A_k)$ is an injection, which proves the statement.
\end{proof}

Every column of a CSM of size $(k+1) \times (n+1)$ can be seen as a Dedekind
map on $C_k$ and every row as a Dedekind map on $C_n$.  As one reads
left to right in columns or bottom to top in rows, another Boolean
growth rule must hold.  This second type of Boolean growth rule gives rise to the
following graphs.

\begin{defin}\label{def:Dedekind.graphs}
Let $G_D(P)$ denote the \emph{Dedekind graph of $P$}, defined as the
directed graph with vertex set given by the Dedekind maps in $D(P)$
and an edge from $f$ to $g$ if $g(x) \in \{f(x),f(x)+1\}$ for all $x
\in P$. The \emph{restricted Dedekind graph of $P$}, $G'_D(P)$, is the
directed graph with vertex set $D(P)$ and an edge from $f$ to $g$ if
$g(\hat 1_P) = f(\hat 1_P) + 1$ and $g(x) \in \{f(x),f(x)+1\}$ for all
$x \in P$.
\end{defin}

If we order the vertices $D(P) = \cup_{k=0}^{\rank P} D_k(P)$ by rank
$k$, and the vertices within $D_k(P)$ lexicographically (according to
our fixed linear order on $P$), the adjacency matrix $A_D(P)$ of
$G_D(P)$ is upper triangular with $1$'s on the diagonal. The adjacency
matrix $A'_D(P)$ of $G'_D(P)$ is strictly upper triangular.  In
particular, both $G_D(P)$ and $G'_D(P)$ are acyclic and have a unique
source and sink.  One can naturally identify the walks in the Dedekind
graph of a chain with CSMs, which leads to the next proposition.
Furthermore, we will use the (restricted) Dedekind graph of a poset to
prove the enumerative results in \Cref{thm:graph}.

\begin{figure}[!ht]
\centering
\includegraphics[height=3cm]{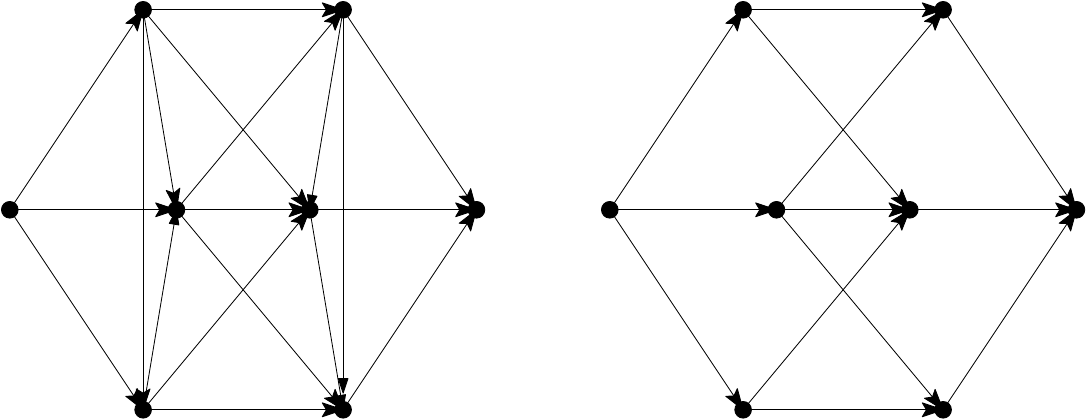}
\caption{The Dedekind graph (the loops are not shown) and the restricted Dedekind graph of $C_3$.}
\label{fig:dedekindgraph}
\end{figure}

\begin{prop}\label{prop:Dedekind.walks}
For any $1\leq k\leq n $, the map between the set $CSM_{k,n}$ and the
set of walks in the Dedekind graph $G_{D}(C_{n})$ from its unique sink
to a vertex in $D_{k}(C_{n})$ determined by the consecutive list of
columns is a bijection.
\end{prop}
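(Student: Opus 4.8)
The plan is to recognize the correspondence as an essentially tautological reindexing: a corner sum matrix and the ordered list of its columns carry the very same data, and each defining condition of a CSM translates term by term into a condition on this list of Dedekind maps. Concretely, I would send $f \in \CSM_{k,n}$ to the sequence $(c_0, c_1, \ldots, c_k)$ of columns, where $c_i \colon C_n \to \N$ is defined by $c_i(j) = f(i,j)$, and define the inverse by $f(i,j) = c_i(j)$. Once both maps are shown to be well defined, mutual inverseness is immediate, since each is literally a relabeling of the same array; so the entire content lies in checking well-definedness in the two directions.

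First I would check that every column $c_i$ is a Dedekind map on $C_n$. We have $c_i(0) = f(i,0) = 0$, and for a cover $j \lessdot j+1$ of $C_n$ the cover $(i,j) \lessdot (i,j+1)$ in $C_k \times C_n$ together with the Boolean growth rule gives $c_i(j+1) \in \{c_i(j), c_i(j)+1\}$; since $c_i$ starts at $0$ and grows by $0$ or $1$, it is automatically surjective onto $C_{c_i(n)}$, so $c_i \in D_{c_i(n)}(C_n)$. Next I would show that consecutive columns are joined by an edge of $G_D(C_n)$: the cover $(i,j) \lessdot (i+1,j)$ and the Boolean growth rule give exactly $c_{i+1}(j) \in \{c_i(j), c_i(j)+1\}$ for all $j$, which is the definition of an edge $c_i \to c_{i+1}$. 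Thus every CSM yields a walk in $G_D(C_n)$.

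The endpoints are pinned down by the remaining two CSM axioms. The $i = 0$ boundary condition gives $c_0(j) = f(0,j) = 0$, so $c_0$ is the all-zero map, the unique source of $G_D(C_n)$; and the hypothesis $k \le n$ forces the rightmost column of $f$ to be $0,1,\ldots,k$, so that $c_i(n) = f(i,n) = i$ and in particular $c_k \in D_k(C_n)$. Since $\rank c_i = c_i(n) = i$, the walk climbs exactly one rank at each step, so it is loop-free of length $k$ and terminates in $D_k(C_n)$. Conversely, given a walk from the source to a vertex of $D_k(C_n)$ of length $k$ (equivalently, one whose rank strictly increases at each step), I would set $f(i,j) = c_i(j)$ and verify the CSM axioms directly: $f(i,0) = 0$ because each $c_i$ is a Dedekind map, $f(0,j) = 0$ because $c_0$ is the source, $f(k,n) = c_k(n) = k = \min\{k,n\}$ because $c_k \in D_k(C_n)$ and $k \le n$, while the vertical and horizontal growth rules are precisely the Dedekind-map and edge conditions.

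The main obstacle---really the only point requiring care---is the endpoint-and-length bookkeeping. One must confirm that membership of the terminal vertex in $D_k(C_n)$ is equivalent to the normalization $f(k,n) = \min\{k,n\}$, which is exactly where the hypothesis $k \le n$ enters, and that the number of columns is forced to be $k+1$, i.e.\ that the walks in the correspondence are exactly those of length $k$ (equivalently, the rank-strict walks, which use no loops). Everything else is a direct unwinding of the Boolean growth rule into the edge relation of $G_D(C_n)$.
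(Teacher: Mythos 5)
Your proof is correct and takes essentially the same route as the paper, whose entire argument is the one-sentence remark that the claim follows by comparing the definition of a CSM with that of a Dedekind map; you simply carry out that term-by-term comparison explicitly (columns are Dedekind maps, vertical covers give edges, the boundary conditions pin down the endpoints). Two minor points in your favour: you have rightly read ``unique sink'' in the statement as the unique \emph{source} (the all-zero map, given that edges in $G_D$ go weakly upward in rank), and your observation that the relevant walks are precisely the rank-strict ones of length $k$ makes precise a detail the paper leaves implicit.
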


\begin{proof}
The proof follows by comparing \Cref{def:CSM} and
\Cref{def:Dedekin.map}.
\end{proof}

Recall the definition of interlacing sets and monotone triangles from
\Cref{sec:background}.  The Dedekind graph of a chain $C_{n}$ also
respects the interlacing conditions.  The next statement follows from
\Cref{prop:Dedekind.walks}. See \Cref{prop:interlacing} for more
connections with the interlacing conditions.

\begin{cor}\label{rem:interlacing}
The restricted Dedekind graph $G'_{D}(C_{n})$ is isomorphic to the
directed graph on $B_n$, with an edge from $S$ to $T$ if $|S| = |T| -
1$ and the sets $S$ and $T$ are interlacing: $t_1 \leq s_1 \leq t_2
\leq s_2 \leq \ldots \leq s_{|T|-1} \leq t_{|T|}$. Similarly, the
Dedekind graph of $C_n$ is isomorphic to the directed graph on $B_n$,
edges as above plus an edge from $S$ to $T$ whenever $|S| = |T|$ and
$t_1 \leq s_1 \leq t_2 \leq s_2 \leq \ldots \leq t_{|T|} \leq
s_{|T|}$.
\end{cor}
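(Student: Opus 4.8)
The plan is to exhibit the isomorphism explicitly through jump sets and then translate the two Boolean growth conditions defining the edges into the two interlacing patterns \eqref{eq:interlacing.1} and \eqref{eq:interlacing.2}.

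First I would fix the vertex bijection. For a Dedekind map $f \colon C_n \to C_k$, the jump set $J_f = \{j \in [n] \colon f(j) = f(j-1)+1\}$ is a subset of $[n]$; since every step of $f$ increases by $0$ or $1$ from $f(0)=0$ to $f(n)=k$, we have $f(i) = |J_f \cap [i]|$ for all $i$ and $|J_f| = k$. Thus $f \mapsto J_f$ is a bijection from $D_k(C_n)$ onto the rank-$k$ elements of $B_n$, hence from $D(C_n)$ onto $B_n$, with inverse sending $S$ to the map $i \mapsto |S \cap [i]|$. This is the candidate vertex map, and it identifies rank in $D(C_n)$ with cardinality in $B_n$.

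Next I would record what the edges say. By \Cref{def:Dedekind.graphs}, writing $S = J_f$ and $T = J_g$, there is an edge $f \to g$ in $G_D(C_n)$ exactly when $0 \le g(i) - f(i) \le 1$ for all $i \in [0,n]$, and an edge in $G'_D(C_n)$ exactly when in addition $g(n) = f(n)+1$. Since $g(n) - f(n) = |T| - |S|$, the extra condition is precisely $|T| = |S|+1$, while in $G_D(C_n)$ the pointwise bound already forces $|T| \in \{|S|, |S|+1\}$. Everything therefore reduces to the following claim, the heart of the argument: the pointwise condition $0 \le |T \cap [i]| - |S \cap [i]| \le 1$ for all $i$ is equivalent to the interlacing \eqref{eq:interlacing.1} when $|T| = |S|+1$, and to the interlacing \eqref{eq:interlacing.2} when $|T| = |S|$.

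I would prove this claim by analyzing the two inequalities $g(i) \ge f(i)$ and $g(i) \le f(i)+1$ separately. Writing $S = \{s_1 < \dots < s_p\}$ and $T = \{t_1 < \dots < t_q\}$, evaluating $g(s_m) \ge f(s_m) = m$ gives $|T \cap [s_m]| \ge m$, i.e.\ $t_m \le s_m$, and evaluating $g(t_m) \le f(t_m)+1$ gives $|S \cap [t_m]| \ge m-1$, i.e.\ $s_{m-1} \le t_m$; chaining these yields exactly \eqref{eq:interlacing.1} or \eqref{eq:interlacing.2} according to whether $q = p+1$ or $q = p$. For the converse I would fix $i$, set $a = |S \cap [i]|$ and $b = |T \cap [i]|$, and use the interlacing relations $s_{a+1} \le t_{a+2}$ and $t_a \le s_a$ to exclude $b \ge a+2$ and $b \le a-1$, leaving $b-a \in \{0,1\}$. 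Assembling, the restricted edges recover the first directed graph on $B_n$, and adjoining the $|T| = |S|$ edges (which include the loops $S = T$, matching the $1$'s on the diagonal of $A_D$ in \Cref{def:Dedekind.graphs} and the unshown loops in the figure) recovers the second. The main obstacle is not deep but demands care: the boundary indices $m = 1$ and $m = q$, the degenerate cases $S = \emptyset$ or $T = \emptyset$, and the conventions $s_0 = 0$, $s_{p+1} = n+1$ needed when $a$ or $b$ is extremal must be handled so that the two inequalities glue into a single interlacing chain. Alternatively, one could read the edge description off \Cref{prop:Dedekind.walks}, since consecutive columns of a CSM are consecutive vertices of a walk in $G_D(C_n)$ whose jump sets interlace by the monotone triangle correspondence; but the direct count above is the cleanest self-contained route.
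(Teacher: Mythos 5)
Your proof is correct and follows essentially the same route as the paper: the paper simply cites \Cref{prop:Dedekind.walks} together with the jump-set/monotone-triangle correspondence of \Cref{sub:asm.fubini}, which is exactly the bijection $f \mapsto J_f$ and the translation of Boolean growth into the two interlacing patterns that you carry out. You merely supply the details (the two inequality families $t_m \leq s_m$ and $s_{m-1} \leq t_m$ and their converse) that the paper leaves implicit, and these check out.
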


\begin{remark}\label{rem:hasse.interlacing}
\Cref{rem:interlacing} implies that the Dedekind graph $G_{D}(C_{n})$
is the Hasse diagram of the Terwilliger poset $\Phi_{n}$ mentioned in
\Cref{rem:TerwilligerPoset}.  The restricted Dedekind graph
$G'_{D}(C_{n})$ is the Hasse diagram of the interlacing poset on the
subsets of $[n]$ with covering relations given by both types of
interlacing conditions.
\end{remark}

The following is the main definition of this paper.  It generalizes
the definition of a CSM in \Cref{def:CSM}.

\begin{defin} \label{def:quilt}
 Let $P$ and $Q$ be finite ranked posets with least and greatest
elements. A \emph{quilt of alternating sign matrices of type $(P,Q)$}
is a map $$f \colon P \times Q \longto \N$$ satisfying:
\begin{itemize}
 \item $f(x,y) = 0$ whenever $x = \hat 0_P$ or $y = \hat 0_Q$,
 \item $f(\hat 1_P, \hat 1_Q) = \min\{\rank P,\ \rank Q\}$, and
 \item if $(x,y) \lessdot (x',y')$ in $P \times Q$, then $f(x',y') \in
 \{f(x,y),f(x,y)+1\}$ (Boolean growth).
\end{itemize}
We will also call such a map an \emph{ASM quilt} or just a
\emph{quilt} for short. The set of all quilts of type $(P,Q)$ will be denoted by $\quilt(P,Q)$.
\end{defin}

\begin{remark}\label{rem:ASMquiltname}
A quilt of type $(C_k, C_n)$ is a CSM of size $(k+1) \times (n+1)$, so there
is also a corresponding ASM and MT. Similarly, for any $f \in
\quilt(P,Q)$ and any pair of maximal chains
$$\hat 0_P = x_0 \lessdot x_1 \lessdot \dots \lessdot x_{k-1} \lessdot
x_k = \hat 1_P, \quad \hat 0_Q = y_0 \lessdot y_1 \lessdot \dots
\lessdot y_{n-1} \lessdot y_n = \hat 1_Q$$ in $P$ and $Q$, the map
$(i,j) \mapsto f(x_i,y_j)$ is a CSM of size $(k+1) \times (n+1)$, which again
has a corresponding ASM and MT.  So we can think of quilts as encoding
collections of alternating sign matrices, one for each pair of maximal
chains in the two posets, appropriately ``pieced'' together like the
fabric of a quilt.
\end{remark}

\begin{example}\label{ex:all.minors.quil}
Let $M$ be a $k \times n$ matrix of full rank.  Take the function
$f_{M}:B_{k} \times B_{n} \longrightarrow \mathbb{N}$ given by setting
$f_{M}(I,J)$ to be the rank of the submatrix of $M$ in rows $I$ and
columns $J$; here $f_{M}(I,J)=0$ if either $I$ or $J$ is the empty
set.  Since $M$ is full rank, we know $f_{M}([k],[n])=\min\{k,n\}$.
Since the rank of any matrix increases by at most 1 when we add in one
more row or column, it is clear that the Boolean growth rule in
\Cref{def:quilt} is satisfied as well.  Hence, $f_{M}:B_{k} \times
B_{n} \longrightarrow \mathbb{N}$ is a quilt of type $(B_{k}, B_{n})$.
This example also proves rank functions of graphs and their subgraphs
are encoded by quilts since the rank of a graph can be defined as the
rank of its adjacency matrix.
\end{example}

\begin{example} \label{ex:dedekindtoquilt}
Take posets $P$ and $Q$ with $\rank P \leq \rank Q$, and an $l$-Dedekind map $g$ on $Q$, $l \geq \rank P$. Then the map
$$f \colon P \times Q \longrightarrow \N, \qquad f(x,y) = \min\{\rank
x,\ g(y)\}$$ is a quilt of type $(P,Q)$. In particular, if $g_1$ and
$g_2$ map to the same quilt $f$, then $g_1(y) = f(\hat 1_P,y) =
g_2(y)$ for all $y \in Q$, so $D_{\rank P}(Q) \longrightarrow
\quilt(P,Q)$ is injective.

\end{example}

\begin{lemma}\label{lem:chains.bounds}
Let $f \in \quilt(P,Q)$.  If $\rank P \geq \rank Q$, then $f(\hat 1_P,y) = \rank_{Q}
y$ for all $y \in Q$.  If $\rank P \leq \rank Q$, then $f(x, \hat 1_{Q}) = \rank_{P} x$ for all $x \in P$.
\end{lemma}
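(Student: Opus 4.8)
The plan is to prove only the first case, $\rank P \geq \rank Q$, and then deduce the second from the symmetry of \Cref{def:quilt} in its two arguments: the map $\tilde f(y,x) = f(x,y)$ is a quilt of type $(Q,P)$, so the statement $f(x,\hat 1_Q) = \rank_P x$ (for $\rank P \leq \rank Q$) is exactly the first statement applied to $\tilde f$. So I would fix $x = \hat 1_P$ and study the single-variable function $g \colon Q \to \N$ defined by $g(y) = f(\hat 1_P, y)$. From the quilt axioms, $g$ inherits three properties: $g(\hat 0_Q) = 0$; Boolean growth along covers of $Q$, since $(\hat 1_P, y) \lessdot (\hat 1_P, y')$ whenever $y \lessdot y'$ in $Q$; and the boundary value $g(\hat 1_Q) = f(\hat 1_P, \hat 1_Q) = \min\{\rank P, \rank Q\} = \rank Q$, where the hypothesis $\rank P \geq \rank Q$ is exactly what identifies this minimum with $\rank Q$.

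The goal is then the two-sided estimate $\rank_Q y \leq g(y) \leq \rank_Q y$. The upper bound $g(y) \leq \rank_Q y$ is the same observation already made for Dedekind maps: walking up a maximal chain $\hat 0_Q \lessdot \cdots \lessdot y$ of length $\rank_Q y$ and applying Boolean growth at each cover, $g$ increases by at most $1$ per step, starting from $g(\hat 0_Q) = 0$. For the matching lower bound I would instead walk up a maximal chain from $y$ to the top, say $y = y_0 \lessdot y_1 \lessdot \cdots \lessdot y_m = \hat 1_Q$ with $m = \rank Q - \rank_Q y$. Boolean growth gives $g(y_{i+1}) \leq g(y_i) + 1$ for each $i$, so telescoping yields $g(\hat 1_Q) - g(y) \leq m$. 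Substituting $g(\hat 1_Q) = \rank Q$ rearranges to $g(y) \geq \rank_Q y$, and the two inequalities combine to the desired equality.

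The only genuinely load-bearing step is the lower bound, and it is precisely there that the hypothesis $\rank P \geq \rank Q$ does its work: it forces the corner value $f(\hat 1_P, \hat 1_Q)$ to equal $\rank Q$ rather than the possibly smaller $\rank P$, which is what makes the chain-from-the-top estimate tight. Everything else is bookkeeping with maximal chains, and both bounds reduce to the elementary fact that a function satisfying the Boolean growth rule changes by at most $1$ across each cover relation.
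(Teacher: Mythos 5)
Your proof is correct and follows essentially the same route as the paper: the paper also walks along a maximal chain of $Q$, using $f(\hat 1_P,\hat 0_Q)=0$, $f(\hat 1_P,\hat 1_Q)=\rank Q$, and Boolean growth to force the value to increase by exactly $1$ at each cover. The only cosmetic differences are that you split the argument into separate upper and lower bounds (rather than observing that all $\rank Q$ steps of the full chain must be jumps) and that you invoke the switch map explicitly where the paper says ``similarly.''
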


\begin{proof}
Assume $k = \rank P \geq n = \rank Q$.  Fix any maximal chain $\hat 0_Q = y_0 \lessdot
y_1 \lessdot \dots \lessdot y_{n-1} \lessdot y_n = \hat 1_Q$ in
$Q$. Then by definition of a quilt, we have $0 = f(\hat 1_P,y_0) \leq
f(\hat 1_P,y_1) \leq \dots \leq f(\hat 1_P,y_n) = \min\{k,n\} = n$ and $f(\hat
1_P,y_i) - f(\hat 1_P,y_{i-1}) \in \{0,1\}$ for $i = 1,\ldots,n$, so
the first claim follows.  The second statement follows similarly. 
\end{proof}

The lemma also shows that the entire rank function of the smaller ranked
poset is encoded in each quilt.  This justifies our claim that quilts
generalize rank functions of posets. 

\bigskip

 There is a natural partial order on $\quilt(P,Q)$: we say that $f \leq g$ if $f(x,y) \leq g(x,y)$ for all $x \in P$, $y \in Q$. For $P = C_k$, $Q = C_n$, this is the well-known partial order on the set of CSMs or ASMs. We will call $\quilt(P,Q)$ the \emph{quilt lattice}, as justified by the following.

\begin{theorem} \label{thm:lattice}
Let $P,Q$ be finite ranked posets with least and greatest elements.
The poset $\quilt(P,Q)$ is a distributive lattice ranked by $$\rk f =
\sum_{x \in P,\, y \in Q} f(x,y) - \sum_{x \in P,\, y \in Q} f_{\hat
0}(x,y),$$ where $f_{\hat 0}(x,y) = \max\{0,\rank x + \rank y -
\max\{\rank P,\rank Q\}\}$ is the least element of $\quilt(P,Q)$. The greatest
element of $\quilt(P,Q)$ is $f_{\hat 1}(x,y) = \min\{\rank x,\ \rank
y\}$.
\end{theorem}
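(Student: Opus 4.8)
The plan is to realize $\quilt(P,Q)$ as a sublattice of $\N^{P\times Q}$ under the coordinatewise order, with meet and join given by the pointwise minimum and maximum, and then to extract the rank function from the structure of covers. So first I would verify that if $f,g \in \quilt(P,Q)$, then $f\wedge g := \min\{f,g\}$ and $f \vee g := \max\{f,g\}$ (taken pointwise) are again quilts. The boundary conditions at $\hat 0_P$, $\hat 0_Q$, and $(\hat 1_P,\hat 1_Q)$ are clearly preserved, since $f$ and $g$ already agree with the required values there. The only point needing care is the Boolean growth rule, and it follows from the elementary observation that if $a'-a \in\{0,1\}$ and $b'-b\in\{0,1\}$, then both $\min\{a',b'\}-\min\{a,b\}$ and $\max\{a',b'\}-\max\{a,b\}$ lie in $\{0,1\}$: the lower bound is monotonicity, and the upper bound comes from $\min\{a',b'\}\le \min\{a,b\}+1$ and the analogous inequality for $\max$. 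Applying this to each cover relation in $P\times Q$ shows $f\wedge g, f \vee g \in \quilt(P,Q)$; since pointwise min (resp.\ max) is the greatest lower (resp.\ least upper) bound in $\N^{P\times Q}$ and both lie in $\quilt(P,Q)$, they are the lattice operations. Distributivity is then inherited from the identity $\min\{a,\max\{b,c\}\}=\max\{\min\{a,b\},\min\{a,c\}\}$ and its dual, applied coordinatewise.

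\textbf{Extremal elements.} To see that $f_{\hat 1}(x,y)=\min\{\rank x, \rank y\}$ is the greatest element, I would check it is a quilt (boundary and growth conditions follow as above) and then bound an arbitrary quilt $f$ from above: fixing $y$ and climbing a maximal chain $\hat 0_P = x_0 \lessdot \dots \lessdot x_m = x$ with $m = \rank x$ gives $f(x,y) \le f(\hat 0_P, y) + m = \rank x$, and symmetrically $f(x,y)\le \rank y$. For the least element $f_{\hat 0}(x,y)=\max\{0, \rank x + \rank y - \max\{k,n\}\}$, with $k=\rank P$ and $n=\rank Q$, I would again verify the quilt axioms and then bound $f$ from below. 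Assuming without loss of generality $k\ge n$ (the other case being symmetric), Lemma~\ref{lem:chains.bounds} pins $f(\hat 1_P, y)=\rank y$; descending from $(\hat 1_P,y)$ to $(x,y)$ along a chain of length $k-\rank x$ yields $f(x,y) \ge \rank y - (k-\rank x) = \rank x + \rank y - k$, and combined with $f \ge 0$ this gives $f \ge f_{\hat 0}$.

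\textbf{Rank function.} It remains to prove that $\sigma(f) = \sum_{x,y} f(x,y)$ increases by exactly $1$ on every cover; gradedness with $\rk f = \sigma(f) - \sigma(f_{\hat 0})$ then follows, and $\rk f_{\hat 0}=0$ by construction. If $\sigma(g) = \sigma(f)+1$ for $f<g$, then $g$ is obtained from $f$ by adding $1$ at a single point, so $f \lessdot g$; it therefore suffices to show that for any quilt pair $f < g$ there is a quilt $h$ with $f < h \le g$ and $\sigma(h) = \sigma(f)+1$. This construction is the technical heart of the argument. Let $S = \{(x,y): f(x,y) < g(x,y)\}$ and $c = \min_{S} f$. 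I would first show that any $(x_0,y_0)\in S$ with $f(x_0,y_0)=c$ has every lower cover at $f$-value $c$ (points of $S$ have $f$-value $\ge c$, and a non-$S$ lower cover inherits value $c$ from the growth rule on $g$ together with $g(x_0,y_0)\ge c+1$). If in addition every upper cover of $(x_0,y_0)$ has $f$-value $c+1$, then incrementing $f$ by $1$ at $(x_0,y_0)$ produces a quilt $h$ with $f < h \le g$. If instead some upper cover has $f$-value $c$, one checks that it again lies in $S$, and I would move to it and repeat; each step goes strictly upward in the finite poset $P\times Q$, so the process terminates at a valid increment point.

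The main obstacle is precisely this last step. A naive choice of a minimal or maximal element of $S$ satisfies only one of the two local compatibility conditions required to keep $h$ a quilt (minimality controls the lower covers, maximality the upper covers), and the walk-up argument along points of $S$ at the fixed level $c$ is what reconciles both conditions simultaneously. Everything else---closure under min and max, distributivity, and the identification of the two extremal elements---is routine given Lemma~\ref{lem:chains.bounds} and the elementary min/max inequalities.
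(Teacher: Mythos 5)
Your proof is correct and follows essentially the same route as the paper: pointwise min/max for the lattice structure, chain-climbing/descending for the two extremal elements, and locating a point of the difference set $\Delta(f,g)=\{(x,y): f(x,y)<g(x,y)\}$ at which $f$ can be incremented by one while remaining a quilt bounded above by $g$. The only cosmetic difference is in how that point is found: the paper takes a sink of an acyclic directed graph on $\Delta(f,g)$ whose edges record both kinds of local obstructions, whereas you dispatch the lower-cover obstruction by choosing the minimum $f$-level of $\Delta(f,g)$ and then walk upward along equal-value covers until the upper-cover obstruction disappears --- the same local search expressed two ways.
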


Before we prove the theorem, let us introduce some notation. Given
$f,g \in \quilt(P,Q)$ such that $f \leq g$, define the \emph{difference set}
\begin{equation}\label{eq:difference.set}
\Delta(f,g) = \{(x,y) \in P \times Q \colon f(x,y) < g(x,y) \}.
\end{equation}
Given $(x,y) \in \Delta(f,g)$ and $(x',y') \in P \times Q$, write
$(x,y) \to (x',y')$ if either $(x,y) \lessdot (x',y')$ and $f(x,y) =
f(x',y')$ or $(x',y') \lessdot (x,y)$ and $f(x',y') = f(x,y) - 1$. In
the first case, $f(x',y') = f(x,y) < g(x,y) \leq g(x',y')$, which
means that $(x',y') \in \Delta(f,g)$. In the second case, $f(x',y') =
f(x,y) - 1 < g(x,y) - 1 \leq g(x',y')$, which also implies $(x',y')
\in \Delta(f,g)$. In other words, we have constructed a directed graph
$G_\Delta(f,g)$ on the difference set $\Delta(f,g)$. An edge from $(x,y)$ to
$(x',y')$ means that the value of $f$ on $(x',y')$ prevents us from
increasing the value of $f$ in $(x,y)$ to get another valid quilt of
type $(P,Q)$.

\begin{proof}[Proof of Theorem~\ref{thm:lattice}] To prove that $\quilt(P,Q)$ is a lattice, observe that for every $f,g \in
\quilt(P,Q)$, the pointwise minimum and maximum of $f$ and $g$ is
again in $\quilt(P,Q)$ by definition.  These quilts are $f\wedge g$ and $f \vee g$
respectively, hence $\quilt(P,Q)$ is a lattice. Distributivity follows
from the fact that $$\max\{a,\min\{b,c\}\} =
\min\{\max\{a,b\},\max\{a,c\}\}$$ and $$\min\{a,\max\{b,c\}\} =
\max\{\min\{a,b\},\min\{a,c\}\}$$ hold for $a,b,c \in \N$.

It is easy to check that $f_{\hat 0}$ and $f_{\hat 1}$ are in
$\quilt(P,Q)$ by checking the properties of the definition.  To see
$f_{\hat 1}$ is the unique maximal element in $\quilt(P,Q)$, consider
any $(x,y) \in P \times Q$ with $\rank x = r$, $\rank y = s$. Since
$P,Q$ are ranked posets, there exist maximal chains
\[
\hat 0_P = x_0 \lessdot x_1 \lessdot \dots
\lessdot x_r = x \lessdot \dots \lessdot x_k = \hat 1_P
\]
and
\[
\hat 0_Q = y_0 \lessdot y_1 \lessdot \dots \lessdot y_s = y \lessdot
\dots \lessdot y_n = \hat 1_Q.
\]
Then, for any $f \in \quilt(P,Q)$, we know
$f(x,y) \leq f(x_{r-1},y) + 1 \leq \ldots \leq f(\hat 0_P,y) + r = r$
and $f(x,y) \leq f(x,y_{s-1}) + 1 \leq \ldots \leq f(x,\hat 0_Q) + s =
s$, so $f(x,y) \leq \min\{r,s\} = f_{\hat 1}(x,y)$. Hence, $f_{\hat 1}$ is maximal.
On the other hand, $f(x,y) \geq f(x,y_{s+1}) - 1 \geq \dots \geq
f(x,\hat 1_Q) - (\rank Q-s) \geq f(x_{r+1},\hat 1_Q) - 1 - (\rank Q-s) \geq \dots
\geq f(\hat 1_P,\hat 1_Q) - (\rank P-r) - (\rank Q-s) = \min\{\rank P,\rank Q\} - \rank P - \rank Q + r + s = r + s - \max\{\rank P,\rank Q\}$ and so $f(x,y) \geq f_{\hat 0}(x,y)$.
Therefore, $f_{\hat 0}$ is minimal in $\quilt(P,Q)$.

To see $\quilt(P,Q)$ is ranked by the given function, choose $f,g \in
\quilt(P,Q)$ with $f < g$. Our goal is to prove that $f \lessdot g$ if and
only if $f$ and $g$ differ in exactly one $(x,y) \in P \times  Q$, and $g(x,y) =
f(x,y) + 1$. The condition is clearly sufficient, let us prove that it is also necessary.

Since $f<g$, the directed graph $G_\Delta(f,g)$ constructed above is
non-empty. We claim that it has no directed cycles, and that it
therefore has at least one sink.  To observe the claim, note that if
$(x,y) \to (x',y')$, then $f(x,y) \geq f(x',y')$, and if $f(x,y) =
f(x',y')$, then $\rank(x, y) < \rank(x', y')$ in $P \times Q$. If $(x,y)
\to (x',y') \to (x'',y'') \to \dots \to (x,y)$, then the value of $f$
must stay constant (if it strictly decreases, it can never increase to
$f(x,y)$ again), and that implies that $\rank(x,y) < \rank(x',y') <
\rank(x'',y'') < \dots < \rank(x,y)$, which is a contradiction.

Take $(x,y)$ to be an arbitrary sink in $G_\Delta(f,g)$.  Observe by
choice of $(x,y)$ that if $(x,y) \lessdot (x',y')$, then $f(x',y') =
f(x,y) + 1$, and if $(x',y') \lessdot (x,y)$, then $f(x',y') =
f(x,y)$. Since $f(\hat 0_{P},\hat 0_{Q})=g(\hat 0_{P},\hat 0_{Q})=0$
and $f(\hat 1_{P},\hat 1_{Q})=g(\hat 1_{P},\hat 1_{Q})=\min\{\rank P,\
\rank Q\}$ by definition of a quilt, $(x,y)$ is not $(\hat 0_{P},\hat
0_{Q})$ or $(\hat 1_{P},\hat 1_{Q})$.  Therefore, the function $f'
\colon P \times Q \longto \N$, defined to be equal to $f$ everywhere
except $f'(x,y)=f(x,y)+1$, is also a quilt of type $(P,Q)$, hence
$f\lessdot f'$.  Furthermore, since $(x,y)$ is a vertex in
$G_\Delta(f,g)$, we know $f'(x,y)=f(x,y)+1\leq g(x,y)$, so $f'\leq g$
as well. Therefore, we conclude that $f \lessdot g$ if and only if
$f'=g$.
\end{proof}

From the proof of the theorem, we learned that the sinks in the
difference graph $G_\Delta(f,g)$ determine all of the quilts covering $f$ in the
interval $[f,g]$.  This includes the case $g=f_{\hat 1}$ so all
covering relations in the quilt lattice are relatively easy to
identify.

\begin{cor}
 Given $f, g \in \quilt(P,Q)$ such that $f < g$, the atoms of the
 interval $[f,g]$ are in a one-to-one correspondence with the sinks of
 the graph $G_\Delta(f,g)$.   Similarly, the co-atoms of $[f,g]$ are
 in a one-to-one correspondence with the sources of $G_\Delta(f,g)$.
\qed
\end{cor}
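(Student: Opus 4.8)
The plan is to extract the atom correspondence directly from the proof of \Cref{thm:lattice} and then obtain the co-atom correspondence by running the same argument ``from the top.'' For the atoms, I would argue as follows. If $(x,y)$ is a sink of $G_\Delta(f,g)$, then the proof of \Cref{thm:lattice} shows that raising $f$ by one at $(x,y)$ produces a quilt $f'$ with $f \lessdot f'$; since $(x,y) \in \Delta(f,g)$ we have $f'(x,y) = f(x,y)+1 \leq g(x,y)$, so $f' \in [f,g]$ and $f'$ is an atom. Conversely, any atom $h$ of $[f,g]$ covers $f$, so by \Cref{thm:lattice} it differs from $f$ at a single point $(x,y)$ with $h(x,y) = f(x,y)+1$, and the Boolean growth conditions on $h$ force every upper cover-neighbor of $(x,y)$ to have $f$-value $f(x,y)+1$ and every lower cover-neighbor to have $f$-value $f(x,y)$, i.e.\ they force $(x,y)$ to be a sink. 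The decisive observation here, already recorded just before the proof of \Cref{thm:lattice}, is that whenever the defining $f$-condition for an outgoing edge holds, its head automatically lies in $\Delta(f,g)$; this is what lets the sink condition (a statement about $f$ on \emph{all} cover-neighbors) coincide with the edge-restricted graph on $\Delta(f,g)$. Thus atoms correspond bijectively to sinks.

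For the co-atoms I would dualize: in the order-dual interval $[f,g]^{\mathrm{op}}$ the element $g$ is the bottom and the co-atoms of $[f,g]$ are exactly the atoms. Concretely, I would lower $g$ by one at a single point $(x,y) \in \Delta(f,g)$, noting that the resulting map $h$ automatically satisfies $h \geq f$ because $f(x,y) < g(x,y)$. The map $h$ is a quilt covered by $g$ precisely when every upper cover-neighbor of $(x,y)$ has the same $g$-value as $(x,y)$ and every lower cover-neighbor has $g$-value one smaller. This is governed by the obstruction graph built from $g$ in exactly the way $G_\Delta(f,g)$ is built from $f$, and the plan is to identify its sinks with the sources of $G_\Delta(f,g)$, yielding the claimed bijection between co-atoms and sources.

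The step I expect to be the main obstacle is precisely this last identification, because the dualization is \emph{not} perfectly symmetric. In the sink analysis the argument used that the head of any would-be edge automatically lies in $\Delta(f,g)$; for sources the analogous statement concerns edge \emph{tails}, and a would-be incoming edge from a lower neighbor $(a,b)$ with $f(a,b) = f(x,y)$ need \emph{not} have $(a,b) \in \Delta(f,g)$. As a consequence, the natural criterion for a valid decrement of $g$ is phrased in terms of $g$, whereas ``source of $G_\Delta(f,g)$'' is phrased in terms of $f$, and these are \emph{a priori} different conditions on $\Delta(f,g)$. I would therefore spend the bulk of the proof reconciling the two descriptions: splitting into the cases where a cover-neighbor of $(x,y)$ does or does not lie in $\Delta(f,g)$, and using that both $f$ and $g$ are quilts together with $(x,y) \in \Delta(f,g)$ to pin down the relevant $g$-values from the known $f$-values (for instance, a cover-neighbor $(a,b)$ above $(x,y)$ with $f(a,b)=g(a,b)$ forces $g(a,b)=g(x,y)=f(x,y)+1$). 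Verifying that this reconciliation genuinely goes through in every case---rather than taking ``similarly'' on faith---is the crux of the co-atom statement, and is where I would concentrate the careful bookkeeping.
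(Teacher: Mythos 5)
Your treatment of the atoms is correct and coincides with what the paper intends: the corollary is stated with no proof beyond the remark preceding it, and the sink--atom bijection does follow directly from the covering characterization in the proof of \Cref{thm:lattice} together with the observation that the head of every edge of $G_\Delta(f,g)$ automatically lies in $\Delta(f,g)$, which is exactly the point you isolate.

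For the co-atoms, however, the reconciliation you correctly flag as the crux does not go through, and no amount of bookkeeping will repair it: the second half of the corollary is false as stated. Take $P=Q=C_4$, $f=f_{\hat 0}$ with $f(i,j)=\max\{0,i+j-4\}$, and $g=f_{\hat 1}$ with $g(i,j)=\min\{i,j\}$, so that $[f,g]$ is all of $\quilt(C_4,C_4)\cong\ASM_{4,4}$. Lowering $g$ by one at $(1,1)$, at $(2,2)$, or at $(3,3)$ yields a quilt above $f$ and covered by $g$, so the interval has three co-atoms. But $(2,2)$ is not a source of $G_\Delta(f,g)$: we have $(1,2)\in\Delta(f,g)$ since $f(1,2)=0<1=g(1,2)$, and $(1,2)\lessdot(2,2)$ with $f(1,2)=0=f(2,2)$, so $(1,2)\to(2,2)$ is an edge. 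Listing all edges shows the only sources are $(1,1)$ and $(3,3)$, so there are two sources against three co-atoms and no bijection of any kind can exist. The underlying reason is precisely the asymmetry you describe: whether $g$ can be lowered at $(x,y)$ is a condition on the $g$-values of its cover-neighbours, whereas being a source of $G_\Delta(f,g)$ is a condition on their $f$-values and $\Delta$-membership, and a lower neighbour $(a,b)\in\Delta$ with $f(a,b)=f(x,y)$ destroys sourcehood while saying nothing about $g(a,b)$ versus $g(x,y)$. The statement that is actually true, and that your second paragraph essentially derives, identifies the co-atoms of $[f,g]$ with the sinks of the mirror-image obstruction graph built from $g$ (edges to upper cover-neighbours whose $g$-value is one larger and to lower cover-neighbours with equal $g$-value); this set need not equal the set of sources of the $f$-graph. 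So your proposal establishes the first half, and your refusal to take ``similarly'' on faith in the second half is vindicated: the step you expected to be the main obstacle is where the corollary itself breaks down.
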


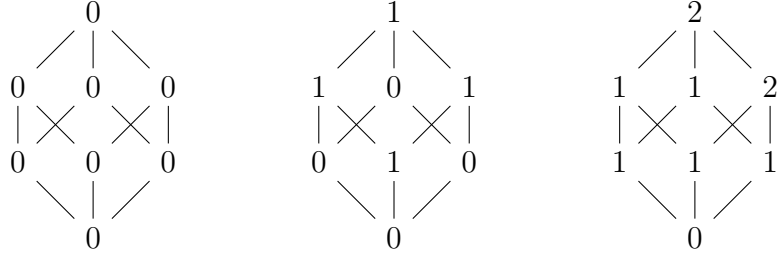
\begin{figure}
\begin{center}
\begin{tikzpicture}
\node (X0) at (0,0) {$0$};
\draw (-0.25,0.25) -- (-0.75,0.75);\draw (0,0.25) -- (0,0.75);\draw (0.25,0.25) -- (0.75,0.75);
\node (X11) at (-1,1) {$0$}; \node (X12) at (0,1) {$0$}; \node (X13) at (1,1) {$0$};
\draw (-1,1.25) -- (-1,1.75);\draw (-0.75,1.25) -- (-0.25,1.75);\draw (-0.25,1.25) -- (-0.75,1.75);\draw (0.25,1.25) -- (0.75,1.75);\draw (0.75,1.25) -- (0.25,1.75);\draw (1,1.25) -- (1,1.75);
\node (X21) at (-1,2) {$0$}; \node (X12) at (0,2) {$0$}; \node (X13) at (1,2) {$0$};
\draw (-0.25,2.75) -- (-0.75,2.25);\draw (0,2.75) -- (0,2.25);\draw (0.25,2.75) -- (0.75,2.25);
\node (X3) at (0,3) {$0$};
\node (X0) at (4,0) {$0$};
\draw (4-0.25,0.25) -- (4-0.75,0.75);\draw (4,0.25) -- (4,0.75);\draw (4.25,0.25) -- (4.75,0.75);
\node (X11) at (4-1,1) {$0$}; \node (X12) at (4,1) {$1$}; \node (X13) at (5,1) {$0$};
\draw (4-1,1.25) -- (4-1,1.75);\draw (4-0.75,1.25) -- (4-0.25,1.75);\draw (4-0.25,1.25) -- (4-0.75,1.75);\draw (4.25,1.25) -- (4.75,1.75);\draw (4.75,1.25) -- (4.25,1.75);\draw (5,1.25) -- (5,1.75);
\node (X21) at (4-1,2) {$1$}; \node (X12) at (4,2) {$0$}; \node (X13) at (5,2) {$1$};
\draw (4-0.25,2.75) -- (4-0.75,2.25);\draw (4,2.75) -- (4,2.25);\draw (4.25,2.75) -- (4.75,2.25);
\node (X3) at (4,3) {$1$};
\node (X0) at (8,0) {$0$};
\draw (8-0.25,0.25) -- (8-0.75,0.75);\draw (8,0.25) -- (8,0.75);\draw (8.25,0.25) -- (8.75,0.75);
\node (X11) at (8-1,1) {$1$}; \node (X12) at (8,1) {$1$}; \node (X13) at (9,1) {$1$};
\draw (8-1,1.25) -- (8-1,1.75);\draw (8-0.75,1.25) -- (8-0.25,1.75);\draw (8-0.25,1.25) -- (8-0.75,1.75);\draw (8.25,1.25) -- (8.75,1.75);\draw (8.75,1.25) -- (8.25,1.75);\draw (9,1.25) -- (9,1.75);
\node (X21) at (8-1,2) {$1$}; \node (X12) at (8,2) {$1$}; \node (X13) at (9,2) {$2$};
\draw (8-0.25,2.75) -- (8-0.75,2.25);\draw (8,2.75) -- (8,2.25);\draw (8.25,2.75) -- (8.75,2.25);
\node (X3) at (8,3) {$2$};
\end{tikzpicture}
\end{center}
\caption{A visual representation of an element in $\quilt(B_3,C_2)$}
\label{fig:quilt.B3.C2}
\end{figure}

We will call a quilt in $\quilt(P,C_n)$ or $\quilt(C_n,P)$ a
\emph{chain quilt}, a quilt in $\quilt(P,A_2(j))$ or
$\quilt(A_2(j),P)$ an \emph{antichain quilt}, a quilt in
$\quilt(P,B_n)$ or $\quilt(B_n,P)$ a \emph{Boolean quilt} etc.  As we
will explain in Section \ref{sec:motivation}, our most important and
motivating example will be when one of the posets is the Boolean
lattice and the other one is a chain.

There are three important ways to think of a chain quilt $f \in
\quilt(P,C_n)$. One is to see it as a sequence of Dedekind maps in
$D(P)$ that correspond with a walk in the Dedekind graph $G_D(P)$,
generalizing \Cref{prop:Dedekind.walks}.

\begin{example}\label{ex:B3.C2.quilt.and.ASMs}
\Cref{fig:quilt.B3.C2} shows a visual representation of the quilt $f \in
\quilt(B_{3},C_{2})$ given by
$$f(\{2 \},1)= f(\{1,2\},1) = f(\{2,3\},1) = f(\{1,2,3\},1) = 1$$ and
$f(T,1)=0$ for all other subsets $T$, while $f(\{2,3 \},2)=f(\{1,2,3
\},2)=2$ and $f(T,2)=1$ for all other nonempty subsets $T$.   From
\Cref{rem:ASMquiltname}, we know that for every maximal chain in $B_3$
(and the unique maximal chain in $C_2$), there is a corresponding $3
\times 2$ ASM encoded by the quilt $f$.  Here are the six ASMs paired
with maximal chains encoded by $f$,
\medskip

\begin{center}
$\left[\begin{smallmatrix} 0 & 1 \\ 1 & -1 \\ 0 &
1 \end{smallmatrix}\right]$ for $\begin{tikzpicture} \draw (0,0) --
(-0.08,0.08) -- (-0.08,0.16) -- (0,0.24); \end{tikzpicture}$,
$\left[\begin{smallmatrix}  1 & 0 \\ 0 & 0 \\ 0 &
1 \end{smallmatrix}\right]$ for $\begin{tikzpicture} \draw (0,0) --
(-0.08,0.08) -- (0,0.16) -- (0,0.24); \end{tikzpicture}$,
$\left[\begin{smallmatrix} 0 & 1 \\ 0 & 0 \\ 1 &
0 \end{smallmatrix}\right]$ for $\begin{tikzpicture} \draw (0,0) --
(0,0.08) -- (-0.08,0.16) -- (0,0.24); \end{tikzpicture}$,
$\left[\begin{smallmatrix} 0 & 0 \\ 0 & 1 \\ 1 &
0 \end{smallmatrix}\right]$ for $\begin{tikzpicture} \draw (0,0) --
(0,0.08) -- (0.08,0.16) -- (0,0.24); \end{tikzpicture}$,
$\left[\begin{smallmatrix} 1 & 0 \\ 0 & 0 \\ 0 &
1 \end{smallmatrix}\right]$ for $\begin{tikzpicture} \draw (0,0) --
(0.08,0.08) -- (0,0.16) -- (0,0.24); \end{tikzpicture}$, and
$\left[\begin{smallmatrix} 0 & 0 \\ 1 & 0 \\ 0 &
1 \end{smallmatrix}\right]$ for $\begin{tikzpicture} \draw (0,0) --
(0.08,0.08) -- (0.08,0.16) -- (0,0.24); \end{tikzpicture}.$
\end{center}
\end{example}

The second way to represent a chain quilt is to observe that $f$ maps
an arbitrary $x \in P$ to the sequence $(f(x,0),f(x,1),\ldots,f(x,n))$
of length $n+1$. This sequence has the property that every two
consecutive elements are either equal or they differ by one. We also
have $f(y,i) \in \{f(x,i),f(x,i)+1\}$ when $x \lessdot y$. The element
$\hat 0_P$ is mapped to the zero sequence, and the sequence
corresponding to $\hat 1_P$ ends with $\min\{\rank P,\ n\}$.  The
pictures in \Cref{fig:quilts.2.5} represent chain quilts with $P =
B_3$ for $n = 2$ and $n = 5$, respectively mapping $x \in B_{3}$ to
the sequence $(f(x,0),f(x,1),\ldots,f(x,n))$.  Note how the top
element on the left is $01\ldots n$, and the rightmost element of
every sequence on the right is equal to its rank, as stated in
\Cref{lem:chains.bounds}.  The picture on the right in
\Cref{fig:quilts.2.5} represents the same quilt at shown in
\Cref{fig:quilt.B3.C2}.

\bigskip

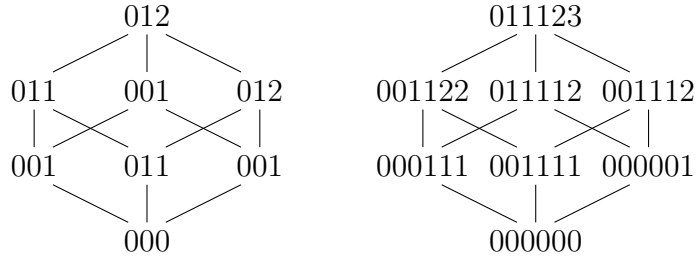
\begin{figure}
\begin{center}
\begin{tikzpicture}
\node (X0) at (0,0) {$000$};
\draw (-0.25,0.25) -- (-1.25,0.75);\draw (0,0.25) -- (0,0.75);\draw (0.25,0.25) -- (1.25,0.75);
\node (X11) at (-1.5,1) {$001$}; \node (X12) at (0,1) {$011$}; \node (X13) at (1.5,1) {$001$};
\draw (-1.5,1.25) -- (-1.5,1.75);\draw (-1.25,1.25) -- (-0.25,1.75);\draw (-0.25,1.25) -- (-1.25,1.75);\draw (0.25,1.25) -- (1.25,1.75);\draw (1.25,1.25) -- (0.25,1.75);\draw (1.5,1.25) -- (1.5,1.75);
\node (X21) at (-1.5,2) {$011$}; \node (X12) at (0,2) {$001$}; \node (X13) at (1.5,2) {$012$};
\draw (-0.25,2.75) -- (-1.25,2.25);\draw (0,2.75) -- (0,2.25);\draw (0.25,2.75) -- (1.25,2.25);
\node (X3) at (0,3) {$012$};
\end{tikzpicture} \qquad
\begin{tikzpicture}
\node (X0) at (0,0) {$000000$};
\draw (-0.25,0.25) -- (-1.25,0.75);\draw (0,0.25) -- (0,0.75);\draw (0.25,0.25) -- (1.25,0.75);
\node (X11) at (-1.5,1) {$000111$}; \node (X12) at (0,1) {$001111$}; \node (X13) at (1.5,1) {$000001$};
\draw (-1.5,1.25) -- (-1.5,1.75);\draw (-1.25,1.25) -- (-0.25,1.75);\draw (-0.25,1.25) -- (-1.25,1.75);\draw (0.25,1.25) -- (1.25,1.75);\draw (1.25,1.25) -- (0.25,1.75);\draw (1.5,1.25) -- (1.5,1.75);
\node (X21) at (-1.5,2) {$001122$}; \node (X12) at (0,2) {$011112$}; \node (X13) at (1.5,2) {$001112$};
\draw (-0.25,2.75) -- (-1.25,2.25);\draw (0,2.75) -- (0,2.25);\draw (0.25,2.75) -- (1.25,2.25);
\node (X3) at (0,3) {$011123$};
\end{tikzpicture}
\end{center}
\caption{Another visualization of quilts of types $(B_{3},C_{2})$ and
$(B_{3},C_{5})$.}
\label{fig:quilts.2.5}
\end{figure}

Another equivalent, and probably even more intuitive, way to represent a chain quilt $f:P \times
C_{n} \longrightarrow \mathbb{N}$ is to say that it is a map that
sends $x \in P$ to the set of \emph{jumps of $f$ at $x$},
\begin{equation}\label{eq:jumps.at.x}
J_f(x) = \{ i \in [n]\colon f(x,i) = f(x,i-1) + 1\}.
\end{equation}
We will call this
the \emph{monotone triangle (MT) form} of the quilt $f$. It is easy
to go back from the MT form of a quilt: given $J:P \longrightarrow B_{n}$,
then $f(x,i)=|J(x)\cap [i]|$ defines $f \colon P \times C_n \longrightarrow \mathbb{N}$.
The Boolean growth condition translates into adjacent sets interlacing for quilts in MT form, see \Cref{prop:interlacing}.

The following shows the two quilts from \Cref{fig:quilts.2.5} in MT form, where we
omit braces and commas for sets since all integers are below $10$.

\begin{center}
\begin{tikzpicture}
\node (X0) at (0,0) {$\emptyset$};
\draw (-0.25,0.25) -- (-1.25,0.75);\draw (0,0.25) -- (0,0.75);\draw (0.25,0.25) -- (1.25,0.75);
\node (X11) at (-1.5,1) {$2$}; \node (X12) at (0,1) {$1$}; \node (X13) at (1.5,1) {$2$};
\draw (-1.5,1.25) -- (-1.5,1.75);\draw (-1.25,1.25) -- (-0.25,1.75);\draw (-0.25,1.25) -- (-1.25,1.75);\draw (0.25,1.25) -- (1.25,1.75);\draw (1.25,1.25) -- (0.25,1.75);\draw (1.5,1.25) -- (1.5,1.75);
\node (X21) at (-1.5,2) {$1$}; \node (X12) at (0,2) {$2$}; \node (X13) at (1.5,2) {$12$};
\draw (-0.25,2.75) -- (-1.25,2.25);\draw (0,2.75) -- (0,2.25);\draw (0.25,2.75) -- (1.25,2.25);
\node (X3) at (0,3) {$12$};
\end{tikzpicture} \qquad
\begin{tikzpicture}
\node (X0) at (0,0) {$\emptyset$};
\draw (-0.25,0.25) -- (-1.25,0.75);\draw (0,0.25) -- (0,0.75);\draw (0.25,0.25) -- (1.25,0.75);
\node (X11) at (-1.5,1) {$3$}; \node (X12) at (0,1) {$2$}; \node (X13) at (1.5,1) {$5$};
\draw (-1.5,1.25) -- (-1.5,1.75);\draw (-1.25,1.25) -- (-0.25,1.75);\draw (-0.25,1.25) -- (-1.25,1.75);\draw (0.25,1.25) -- (1.25,1.75);\draw (1.25,1.25) -- (0.25,1.75);\draw (1.5,1.25) -- (1.5,1.75);
\node (X21) at (-1.5,2) {$24$}; \node (X12) at (0,2) {$15$}; \node (X13) at (1.5,2) {$25$};
\draw (-0.25,2.75) -- (-1.25,2.25);\draw (0,2.75) -- (0,2.25);\draw (0.25,2.75) -- (1.25,2.25);
\node (X3) at (0,3) {$145$};
\end{tikzpicture}
\end{center}

\begin{prop} \label{prop:interlacing}
 Take $f \in \quilt(P,C_n)$. For all $x,y \in P$ with $x \lessdot y$, the sets $S = J_f(x)$ and $T = J_f(y)$ are interlacing. When $n \leq \rank P$, $ J_f(\hat 1_P) = [n]$. When $n \geq \rank P$, we have $|J_{f}(x)| = \rank x$ for all $x \in P$.
\end{prop}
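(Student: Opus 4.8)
The plan is to reduce everything to the single inequality $f(y,i)-f(x,i)\in\{0,1\}$ that the Boolean growth rule forces, and then to translate this into interlacing using the partial-sum description $f(x,i)=|J_f(x)\cap[i]|$ recorded just before the statement. First I would fix $x\lessdot y$ in $P$ and set $S=J_f(x)$, $T=J_f(y)$. For every $i\in\{0,1,\dots,n\}$ there is a cover relation $(x,i)\lessdot(y,i)$ in $P\times C_n$, so Boolean growth gives $f(y,i)\in\{f(x,i),f(x,i)+1\}$. Substituting $f(x,i)=|S\cap[i]|$ and $f(y,i)=|T\cap[i]|$, this reads
\[
0\le |T\cap[i]|-|S\cap[i]|\le 1\qquad\text{for all }0\le i\le n.
\]
Evaluating at $i=n$ gives $|T|-|S|\in\{0,1\}$, so writing $p=|S|$ and $q=|T|$ we are in exactly one of the cases $p=q-1$ or $p=q$ appearing in \eqref{eq:interlacing.1}--\eqref{eq:interlacing.2}.

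The core step is to show this bounded-partial-sum condition is equivalent to interlacing. Writing $S=\{s_1<\dots<s_p\}$ and $T=\{t_1<\dots<t_q\}$, I would use the elementary identity $|T\cap[i]|\ge k\iff t_k\le i$, and likewise for $S$. The lower bound $|S\cap[i]|\le|T\cap[i]|$ evaluated at $i=s_k$ gives $k=|S\cap[s_k]|\le|T\cap[s_k]|$, i.e.\ $t_k\le s_k$ for $1\le k\le p$; the upper bound $|T\cap[i]|\le|S\cap[i]|+1$ evaluated at $i=t_{k+1}$ gives $k+1=|T\cap[t_{k+1}]|\le|S\cap[t_{k+1}]|+1$, i.e.\ $s_k\le t_{k+1}$ whenever $t_{k+1}$ is defined. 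Stringing these together yields $t_1\le s_1\le t_2\le s_2\le\cdots$, which is exactly \eqref{eq:interlacing.1} when $p=q-1$ and \eqref{eq:interlacing.2} when $p=q$. Hence $S$ and $T$ interlace.

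For the two boundary claims I would invoke \Cref{lem:chains.bounds} with $Q=C_n$. When $n\le\rank P$ we have $\rank P\ge\rank C_n$, so $f(\hat 1_P,i)=\rank_{C_n} i=i$ for all $i$; thus every $i\in[n]$ is a jump and $J_f(\hat 1_P)=[n]$. When $n\ge\rank P$ the lemma gives $f(x,n)=\rank_P x$ for all $x\in P$, and since the jump count telescopes, $|J_f(x)|=\sum_{i=1}^n\bigl(f(x,i)-f(x,i-1)\bigr)=f(x,n)-f(x,0)=\rank x$.

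The only place that needs genuine care is the equivalence in the second paragraph: the argument must handle the two interlacing patterns $p=q-1$ and $p=q$ uniformly and keep the index ranges straight — $t_k\le s_k$ for $k\le p$, and $s_k\le t_{k+1}$ for $k\le\min\{p,q-1\}$ — so that the chain of inequalities terminates at the correct end in each case. Everything else is a direct application of the Boolean growth rule and \Cref{lem:chains.bounds}.
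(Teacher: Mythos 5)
Your proposal is correct and follows essentially the same route as the paper's proof: the paper likewise derives $|T|-|S|\in\{0,1\}$ from Boolean growth, obtains $t_i\le s_i$ and $s_i\le t_{i+1}$ from the constraint that $f(x,j)$ and $f(y,j)$ differ by at most one, and deduces the two boundary claims from \Cref{lem:chains.bounds}. Your write-up merely makes the partial-sum bookkeeping ($f(x,i)=|J_f(x)\cap[i]|$ and the evaluations at $i=s_k$, $i=t_{k+1}$) explicit where the paper states it verbally.
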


\begin{proof}
Assume $f \in \quilt(P,C_n)$.  By construction, we have $|S| = f(x,n)$
and $|T| = f(y,n)$.  Since $x \lessdot y$ and $f$ is a quilt, $|T| =
|S|$ or $|T| = |S|+1$. Furthermore, the $i$-th jump in $J_{f}(y)$
cannot come after the $i$-th jump in $J_{f}(x)$. That implies that
$t_i \leq s_i$. Also, the $i$-th jump in $J_{f}(x)$ cannot come after
the $(i+1)$-st jump in $J_{f}(y)$, as that would violate the rule that
$f(x,j)$ and $f(y,j)$ can differ by at most $1$. Therefore $s_i \leq
t_{i+1}$. The last two statements follow from
\Cref{lem:chains.bounds}.
\end{proof}

The general problem of enumerating quilts is hard, as the following
theorem shows. Recall that a counting problem is in \#P if we can
represent it as counting the number of accepting paths of a
polynomial-time non-deterministic Turing machine, and it is
\#P-complete if every problem in \#P has a polynomial-time counting
reduction to it.  To prove a problem is $\#P$-complete, it suffices to
show it is in $\#P$ and is as hard as some $\#P$-complete problem.

\begin{theorem} \label{thm:complexity}
 Computing $|\quilt(P,Q)|$ for general $P$ and $Q$ is a \#P-complete problem.
\end{theorem}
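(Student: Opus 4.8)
The plan is to prove \#P-completeness in two parts: membership in \#P and \#P-hardness. For membership, I would observe that a quilt of type $(P,Q)$ is a map $f \colon P \times Q \to \N$ with values bounded by $\min\{\rank P, \rank Q\}$ (a consequence of the fact that $f(x,y) \le \min\{\rank x, \rank y\}$, as shown in the proof of \Cref{thm:lattice}). A nondeterministic polynomial-time Turing machine can guess such a bounded integer-valued table, whose size is polynomial in $|P|\cdot|Q|$, and then verify the three defining conditions of \Cref{def:quilt} in polynomial time by checking each cover relation of $P \times Q$. Thus the number of accepting paths equals $|\quilt(P,Q)|$, placing the problem in \#P.

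The core of the argument is \#P-hardness, and here the natural target is the enumeration of antichains in a poset, which \Cref{rem:sharp.P} already identifies as \#P-complete (citing \cite{antichains}). The plan is to reduce antichain counting to quilt counting. By \Cref{rem:sharp.P}, $d_1(P)$ counts the antichains of $P$ other than $\emptyset$ and $\{\hat 0\}$, so computing $d_1(P)$ is \#P-complete; equivalently, the total number of antichains in an arbitrary finite poset is \#P-complete to compute. I would aim to show that for a suitable choice of the second poset $Q$ — the most economical being $Q = C_1$ (the chain of rank $1$) — the set $\quilt(P, C_1)$ is in bijection with, or differs by a trivial additive/multiplicative constant from, the Dedekind maps counted by $d_1(P)$ together with the trivial rank-$0$ map. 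Indeed, a quilt of type $(P, C_1)$ assigns to each $x \in P$ the sequence $(f(x,0), f(x,1)) = (0, f(x,1))$ with $f(x,1) \in \{0,1\}$ subject to Boolean growth along $P$, which is exactly the data of a monotone $\{0,1\}$-valued function on $P$, i.e. a Dedekind map of rank $0$ or $1$. Since each rank-$1$ Dedekind map corresponds to a nonempty antichain of minimal elements, counting these quilts computes the number of antichains, giving the reduction.

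The main obstacle I anticipate is ensuring the reduction is genuinely parsimonious (or at least a valid polynomial-time counting reduction) and that the definitional boundary conditions of a quilt do not obstruct the correspondence. Specifically, \Cref{def:quilt} requires $f(\hat 1_P, \hat 1_Q) = \min\{\rank P, \rank Q\}$; with $Q = C_1$ this forces $f(\hat 1_P, 1) = \min\{\rank P, 1\} = 1$ whenever $\rank P \ge 1$, which rules out the all-zero map and slightly shifts the count away from the full Dedekind number $d(P)$. I would need to check that $|\quilt(P,C_1)| = d_1(P)$ exactly (each quilt being precisely a rank-$1$ Dedekind map on $P$), so that the quilt count equals the antichain count up to the understood exclusions of $\emptyset$ and $\{\hat 0\}$ built into \Cref{rem:sharp.P}. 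Since any \#P-complete problem admits a polynomial-time counting reduction to the antichain-counting problem, and I would exhibit a polynomial-time reduction from antichain counting to quilt counting, transitivity of reductions yields that quilt counting is \#P-hard; combined with membership, this proves \#P-completeness.
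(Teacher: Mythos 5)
Your proposal is correct and follows essentially the same route as the paper: membership in \#P by polynomial-time verification of the quilt conditions, and hardness via the bijection between $\quilt(P,C_1)$ and the antichains of $P$ other than $\emptyset$ and $\{\hat 0_P\}$ (equivalently, the rank-$1$ Dedekind maps). The one detail you leave implicit, which the paper dispatches in a sentence, is that the \#P-hard instances of antichain counting must be realizable as finite \emph{ranked} posets with $\hat 0$ and $\hat 1$ (the class to which the quilt problem applies); this holds because the Provan--Ball construction already produces ranked posets, and adjoining $\hat 0$ and $\hat 1$ changes the antichain count by only two.
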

\begin{proof}
One can check if a given map $P \times Q \to \N$ satisfies the
properties of a quilt in polynomial time in terms of the sizes of $P$
and $Q$. It follows that the problem
of computing $|\quilt(P,Q)|$ is in \#P. To prove \#P-completeness,
note that mapping $f \in \quilt(P, C_1)$ to the set of minimal
elements of $\{ x \in P \colon f(x,\hat 1) = 1\}$ gives a bijection
between $\quilt(P, C_1)$ and the set $\{A \subseteq P, A \mbox{
antichain}\} \setminus \{\emptyset, \{\hat 0_P\}\}$. The fact that
counting antichains in finite posets is \#P-complete is Part 3 of the
main theorem proved by Provan and Ball in \cite[p. 783]{antichains}. One
can easily adapt their proof to ranked posets with $\hat 0$ and $\hat
1$: the poset constructed in the proof is already ranked, and adding
$\hat 0$ and $\hat 1$ just adds two extra antichains.
\end{proof}

Even though computing $|\quilt(P,Q)|$ for general $P$ and $Q$ is
out of reach, we do have a simple upper bound. Recall that $b(P) = \sum_{x \in P}
\rank x$.

\begin{theorem} \label{thm:upperbound}
 If $\rank P \leq \rank Q$, then $|\quilt(P,Q)| \leq d_1(Q)^{b(P)}$.
\end{theorem}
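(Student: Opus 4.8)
The plan is to build an injection from $\quilt(P,Q)$ into a product of Dedekind-map sets indexed by the elements of $P$, and then bound each factor using \Cref{lemma:dedekindupper}. The guiding observation is that each ``column'' of a quilt---the restriction obtained by fixing $x \in P$---is itself a Dedekind map on $Q$, and that its rank is exactly $\rank x$. The first step is to invoke \Cref{lem:chains.bounds}: since $\rank P \leq \rank Q$, every quilt $f \in \quilt(P,Q)$ satisfies $f(x,\hat 1_Q) = \rank x$ for all $x \in P$. This is what pins down the rank of each column and is the one place where the hypothesis $\rank P \leq \rank Q$ is essential.

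Next, for a fixed $x \in P$ I would set $g_x := f(x,\cdot) \colon Q \to \N$ and verify that $g_x \in D_{\rank x}(Q)$. The boundary conditions $g_x(\hat 0_Q) = f(x,\hat 0_Q) = 0$ and $g_x(\hat 1_Q) = \rank x$ come from the definition of a quilt and from the previous step; the Boolean growth condition for $g_x$ along a cover $y \lessdot y'$ in $Q$ is inherited directly from the Boolean growth of $f$ applied to $(x,y) \lessdot (x,y')$. Surjectivity onto $C_{\rank x} = \{0,1,\ldots,\rank x\}$ follows by tracing any maximal chain of $Q$ from $\hat 0_Q$ to $\hat 1_Q$: along it $g_x$ starts at $0$, ends at $\rank x$, and moves by $0$ or $1$ at each cover, so it attains every intermediate value. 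Since $f$ is recovered pointwise from the tuple $(g_x)_{x \in P}$, the assignment $f \mapsto (g_x)_{x \in P}$ is an injection $\quilt(P,Q) \hookrightarrow \prod_{x \in P} D_{\rank x}(Q)$, giving the bound $|\quilt(P,Q)| \leq \prod_{x \in P} d_{\rank x}(Q)$.

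Finally I would estimate each factor. By \Cref{lemma:dedekindupper}, $d_k(Q) \leq d_1(Q)^k$ for every $k \geq 1$, and $d_0(Q) = 1$ (the only Dedekind map of rank $0$ is the constant map), so $d_{\rank x}(Q) \leq d_1(Q)^{\rank x}$ holds for every $x \in P$, including $x = \hat 0_P$. Multiplying over all $x \in P$ yields $\prod_{x \in P} d_1(Q)^{\rank x} = d_1(Q)^{\sum_{x \in P} \rank x} = d_1(Q)^{b(P)}$, which is the claimed inequality. There is no real obstacle here: the entire argument hinges on the single structural insight that fixing $x$ turns a quilt into a rank-$\rank x$ Dedekind map, and the only point demanding a small amount of care is the surjectivity check in the second step, which is why I would state it explicitly rather than leave it implicit.
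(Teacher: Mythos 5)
Your proof is correct and follows essentially the same route as the paper: restrict the quilt to each $x \in P$ to obtain a Dedekind map of rank $\rank x$ on $Q$, observe that $f \mapsto (f(x,\cdot))_{x \in P}$ is injective, and then apply \Cref{lemma:dedekindupper} factor by factor. The only difference is that you spell out the routine verifications (surjectivity of each restriction and the $d_0(Q)=1$ case at $\hat 0_P$) that the paper leaves implicit.
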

\begin{proof}
Consider a quilt $f \in \quilt(P,Q)$. For $x \in P$, we have $f(x,\hat 1_Q) = \rank x$, and the map $f^x \colon y \mapsto f(x,y)$ is in $D_{\rank x}(Q)$. The map $f \mapsto (f^x)_{x \in P}$ is an injection, so $|\quilt(P,Q)| \leq \prod_{x \in P} d_{\rank x}(Q)$. By \Cref{lemma:dedekindupper},
$$|\quilt(P,Q)| \leq \prod_{x \in P} d_{\rank x}(Q) \leq \prod_{x \in P} d_1(Q)^{\rank x} = d_1(Q)^{b(P)}.$$
\end{proof}

\section{Motivation from the space of spanning line configurations}
\label{sec:motivation}

An interesting analog of the flag manifold and Schubert varieties was
given by Pawlowski and Rhoades in \cite{PR}.  For any $k\leq n$, they
define a \emph{spanning line configuration} $l_{\bullet} =
(l_1,\ldots,l_n)$ to be an ordered $n$-tuple in the product of complex
projective spaces $(\mathbb{P}^{k-1})^{n}$ whose vector space sum is
$\mathbb{C}^k$.  Each such configuration can be identified by a $k
\times n$ full rank matrix over $\mathbb{C}$ with no zero columns.
Note, multiplying a $k \times n$ matrix on the right by an invertible
$n \times n$ diagonal matrix determines the same spanning line
configuration.  Therefore, the space of all such configurations can be
identified with the orbits
\begin{equation}X_{n,k}= \Fullrk/T = \{l_{\bullet} = (l_1,\ldots,l_n)
\in (\mathbb{P}^{k-1})^n \hspace{0.05in} \colon \hspace{0.05in} l_1 +
\cdots + l_n = \mathbb{C}^k\}
\end{equation}
where $\Fullrk$ is the set of full rank $k \times n$ matrices with no
zero columns and $T$ is the set of diagonal matrices in
$GL_{n}(\mathbb{C})$.

Pawlowski and Rhoades give a cell decomposition of $X_{n,k}=\bigcup
C_{w}$ indexed by Fubini words $w \in W_{n,k}$, which were defined in
\Cref{sub:asm.fubini}.  This cell decomposition plays an important
role in the geometry and topology of the space of spanning line
configurations.  The closure of the cell $C_{w}$ in $X_{n,k}$ is
determined by certain rank conditions on the matrices representing
points in $C_{w}$.  The Pawlowski-Rhoades varieties, or \emph{PR
varieties} for short, are the cell closures $\overline{C}_{w}$.

The required rank conditions for PR varieties can be described in
terms of quilts as follows.  Every $k \times n$ matrix $M$ determines
a Boolean-chain quilt $f_{M}: C_{k} \times B_{n} \longrightarrow
\mathbb{N}$ given by sending $(h,J)$ to the rank of the submatrix of M
on rows $[h]=\{1,2,\dotsc ,h \}$ and columns in $J$, denoted
$f_{M}(h,J)=\rank(M[[h],J])$.  Here we define the boundary cases by
$f_{M}(0,J)=f_{M}(h,\emptyset) =0$ for all $1\leq h\leq k$ and $J
\subseteq [n]$.  The quilt $f_{M}$ also encodes the rank functions of
the flag matroid corresponding to $M$ as mentioned in
\Cref{sub:matroids}.  Furthermore, if $D \in T$ is an invertible
diagonal matrix then $f_{M}=f_{MD}$, so such collections of rank
functions are constant on the $T$-orbits in $\Fullrk$. Hence, each
spanning line configuration gives rise to a well-defined Boolean-chain
quilt.  It was shown in \cite[Lemma 4.1.2]{Ryan.2022} that $f_{M}$
determines which cell $C_{w} \subset X_{n,k}$ contains the spanning
line configuration determined by the columns of $M$ for each $M \in
\Fullrk$.  On the other hand, if we define a function $C_{k}\times
B_{n} \longrightarrow \mathbb{N}$ for each Fubini word $w \in W_{n,k}$
by
\begin{equation}\label{eq:def.fw}
f_{w}(h,J) = \mathrm{max}\{f_{M}(h,J) \given M \in C_{w} \} 
\end{equation}
for each $1\leq h\leq k$ and $J \subseteq [n]$, then we have the
following observations.

\begin{lemma}\label{lem:medium.roats.quilt.lattice}
For $w \in W_{n,k}$, the map $f_{w} \in \quilt(C_{k},B_{n})$.
Furthermore, the spanning line configuration defined by $M \in \Fullrk$ is
in $\overline{C}_{w}$ if and only if $f_{M} \leq f_{w}$ in the quilt
lattice of type $(C_{k},B_{n})$.
\end{lemma}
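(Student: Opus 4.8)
The plan is to treat the two assertions separately, proving the combinatorial claim that $f_w$ is a quilt first, and then establishing the geometric equivalence by splitting it into a ``closed condition'' direction and a ``completeness'' direction.

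\textbf{Part 1 ($f_w \in \quilt(C_k,B_n)$).} First I would check that each individual $f_M$ is a quilt of type $(C_k,B_n)$. The boundary conventions give $f_M(0,J) = f_M(h,\emptyset) = 0$; fullness of $M$ together with $k \le n$ gives $f_M(k,[n]) = \rank M = k = \min\{\rank C_k,\ \rank B_n\}$; and the Boolean growth rule is the classical fact that the rank of a submatrix changes by $0$ or $1$ when a single row or column is adjoined, exactly as in \Cref{ex:all.minors.quil} restricted along the rank-preserving embedding $h \mapsto [h]$ of $C_k$ as a maximal chain of $B_k$. I would then pass to $f_w = \max_{M \in C_w} f_M$. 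Since every value $f_M(h,J)$ lies in the finite set $\{0,1,\dots,k\}$, the maximum is attained; the vanishing on the boundary and the value $k$ at $(k,[n])$ are inherited because they are shared by every $f_M$. For Boolean growth at a cover $(x,y) \lessdot (x',y')$, monotonicity $f_w(x,y) \le f_w(x',y')$ is immediate from monotonicity of each $f_M$, while the upper bound follows by choosing $M_0 \in C_w$ attaining $f_w(x',y') = f_{M_0}(x',y')$ and writing $f_w(x',y') = f_{M_0}(x',y') \le f_{M_0}(x,y) + 1 \le f_w(x,y) + 1$. Hence $f_w \in \quilt(C_k,B_n)$.

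\textbf{Part 2, necessity ($M \in \overline{C}_w \Rightarrow f_M \le f_w$).} Here I would invoke lower semicontinuity of matrix rank: for each fixed $(h,J)$ the locus $\{M : \rank(M[[h],J]) \le f_w(h,J)\}$ is Zariski closed, and it contains $C_w$ by the very definition of $f_w$ as a maximum over that cell. These conditions descend to $X_{n,k}$ since $f_M = f_{MD}$ for $D \in T$. A closed set containing $C_w$ contains its closure, so every configuration in $\overline{C}_w$ satisfies $f_M(h,J) \le f_w(h,J)$ for all $(h,J)$, i.e.\ $f_M \le f_w$. This direction is routine.

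\textbf{Part 2, sufficiency ($f_M \le f_w \Rightarrow M \in \overline{C}_w$) --- the obstacle.} The reverse implication is the crux. I would use the cell decomposition $X_{n,k} = \bigsqcup_{w'} C_{w'}$ together with \cite[Lemma 4.1.2]{Ryan.2022}, so that the configuration of $M$ lies in a unique cell $C_{w'}$ with $w'$ recovered from $f_M$, the function $f_M$ being constant on each cell and equal there to $f_{w'}$. Under this reduction the hypothesis $f_M \le f_w$ becomes $f_{w'} \le f_w$, and the goal becomes $C_{w'} \subseteq \overline{C}_w$; that is, one must show the quilt order refines the closure order on cells. The necessity direction already yields the implication $C_{w'} \subseteq \overline{C}_w \Rightarrow f_{w'} \le f_w$, so what remains is precisely the converse \emph{completeness} statement: that $\overline{C}_w$ is cut out set-theoretically by exactly the rank inequalities encoded in $f_w$, with no further conditions, in analogy with the description of matrix Schubert varieties as rank varieties. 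I expect this to be the main difficulty, and I would supply it by citing the rank-condition description of PR variety closures from \cite{PR} and \cite{Ryan.2022}; the self-contained alternative would be an explicit one-parameter degeneration of a generic configuration in $C_w$ onto a configuration in $C_{w'}$ whenever $f_{w'} \le f_w$, which is the genuinely geometric content and the part I would not attempt to reprove combinatorially. Either route gives $M \in \overline{C}_w$ and completes the equivalence.
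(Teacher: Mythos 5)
Your proposal is correct and follows essentially the same route as the paper: both directions of the geometric equivalence ultimately rest on the external fact (from Pawlowski--Rhoades and Ryan's thesis) that $\overline{C}_{w}$ is cut out set-theoretically by exactly the rank conditions encoded in $f_{w}$, with the forward direction handled by closedness/semicontinuity of the rank conditions. The one small divergence is in showing $f_{w}$ is a quilt: the paper observes that $f_{w}=f_{M}$ for a generically chosen $M \in C_{w}$, whereas you verify directly that a pointwise maximum of quilts satisfies the Boolean growth rule; both work, and yours has the minor advantage of not needing irreducibility of the cell.
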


\begin{proof}
The fact that $f_{w}$ is a quilt of type $(C_{k},B_{n})$ follows from
the fact that $f_{w}=f_{M}$ for any generically chosen $M \in C_{w}$.
By \eqref{eq:def.fw}, if $M \in C_{w}$, then $f_{M}\leq f_{w}$ in the
quilt lattice. This claim extends any $M \in \overline{C}_{w}$ since
the closure is defined by such rank conditions.  Conversely, if $M \in
\Fullrk$ and $f_{M} \leq f_{w}$ in the quilt lattice, then $M$
satisfies the defining rank conditions for the PR variety $\overline{C}_{w}$.
\end{proof}

There is a natural analog of Bruhat order for Fubini words in
$W_{n,k}$ given by the (reverse) containment order on the PR
varieties, so $v\leq w$ if and only if $\overline{C}_{w} \subseteq
\overline{C}_{v}$.  This partial order, originally studied by
Pawlowski--Rhoades, is called the \textit{medium roast Fubini--Bruhat
order} on $W_{n,k}$ following terminology in
\cite{Billey-Ryan,Ryan.2022}.  Billey--Ryan observed that the medium
roast order can also be determined in terms of certain vanishing flag
minors.   An algorithm to determine $f_{w}:C_{k} \times
B_{n}\longrightarrow C_{k}$ directly from $w$ is given by \cite[Lemma
5.2.8]{Ryan.2022}.

\begin{cor}\label{cor:medium.roast.quilts}
The medium roast order on $W_{n,k}$ is the subposet of
$\quilt(C_k,B_n)$ defined by
\[
v\leq w \iff f_{v} \geq f_{w}.
\]
\end{cor}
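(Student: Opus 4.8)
The plan is to show that the medium roast order on $W_{n,k}$, defined by reverse containment of PR varieties, coincides exactly with the reverse-quilt order given in the statement. Since both orders live on the same ground set $W_{n,k}$, it suffices to prove that the two order relations agree: for $v, w \in W_{n,k}$, I would establish $\overline{C}_{w} \subseteq \overline{C}_{v} \iff f_{v} \geq f_{w}$. The first step is to recall from the definition of the medium roast order that, by construction, $v \leq w$ in medium roast order means precisely $\overline{C}_{w} \subseteq \overline{C}_{v}$. So the entire content of the corollary is to translate the geometric containment condition into the quilt inequality $f_v \geq f_w$, and this is exactly what \Cref{lem:medium.roats.quilt.lattice} is built to deliver.

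The key observation I would use is that each Fubini word $w$ has a generic point in its cell $C_w$, and for such a generic $M \in C_w$ we have $f_M = f_w$ by the argument in the proof of \Cref{lem:medium.roats.quilt.lattice}. The main step is then the following chain of equivalences. To show $f_v \geq f_w$ implies $\overline{C}_w \subseteq \overline{C}_v$, I would take any point of $\overline{C}_w$; it is a limit of points in $C_w$, each satisfying $f_M \leq f_w$, and hence (since rank conditions are closed) any $M$ representing a point of $\overline{C}_w$ satisfies $f_M \leq f_w \leq f_v$. By the second assertion of \Cref{lem:medium.roats.quilt.lattice}, $f_M \leq f_v$ is exactly the defining condition for $M$ to represent a point of $\overline{C}_v$, giving $\overline{C}_w \subseteq \overline{C}_v$. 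Conversely, if $\overline{C}_w \subseteq \overline{C}_v$, then picking a generic $M \in C_w \subseteq \overline{C}_w \subseteq \overline{C}_v$ with $f_M = f_w$, \Cref{lem:medium.roats.quilt.lattice} gives $f_w = f_M \leq f_v$, which is the desired inequality $f_v \geq f_w$.

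The one point requiring care, and the step I expect to be the main (if modest) obstacle, is the direction extracting the pointwise quilt inequality $f_v \geq f_w$ from the set-theoretic containment. The subtlety is that $f_w$ is defined as a maximum of $f_M$ over $M \in C_w$ in \eqref{eq:def.fw}, so I must justify that this maximum is attained at (indeed equals the value of $f_M$ at) a generic $M$. This is precisely the assertion ``$f_w = f_M$ for any generically chosen $M \in C_w$'' already used in the proof of \Cref{lem:medium.roats.quilt.lattice}, so I may invoke it directly; genericity guarantees that all the relevant minors that can be nonzero are nonzero, making the ranks maximal simultaneously. Once that is in hand, the containment $C_w \subseteq \overline{C}_v$ forces the generic $M$ into $\overline{C}_v$, and the equivalence in \Cref{lem:medium.roats.quilt.lattice} closes the argument.

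Since the corollary is essentially a repackaging of \Cref{lem:medium.roats.quilt.lattice} together with the definition of the medium roast order, the proof is short: I would simply assemble the two implications above, citing \Cref{lem:medium.roats.quilt.lattice} for the translation between rank conditions and the quilt inequality and noting that reverse containment of cell closures is the definition of medium roast order. The resulting subposet of $\quilt(C_k, B_n)$ is then exactly the image of $W_{n,k}$ under $w \mapsto f_w$, ordered by the reverse of the quilt order, which is what the statement asserts.
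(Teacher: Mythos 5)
Your proof is correct and matches the paper's (implicit) argument: the corollary is stated without proof precisely because it follows immediately from \Cref{lem:medium.roats.quilt.lattice} applied in both directions together with the definition of medium roast order as reverse containment of the $\overline{C}_w$, which is exactly the assembly you carry out. The extra care you take about the maximum in \eqref{eq:def.fw} being attained at a generic point is already absorbed into the lemma's proof, so nothing further is needed.
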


\begin{cor}\label{cor:intervals.in.medium}
The interval $[v,w]$ in medium roast Fubini order on $W_{n,k}$ can be
determined from the interval $[f_{v},f_{w}]$ in $\quilt(C_k,B_n)$ by
identifying all of the quilts in $[f_{v},f_{w}]$ which correspond to
some Fubini word.  In particular, testing if $v$ is covered by $w$
reduces to checking that the open interval $(f_{v},f_{w})$ contains no
$f_{u}$ for $u \in W_{n,k}$.
\end{cor}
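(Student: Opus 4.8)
The plan is to treat \Cref{cor:medium.roast.quilts} as the essential input and then simply transport intervals across the embedding it provides. That corollary identifies the medium roast order on $W_{n,k}$ with a subposet of $\quilt(C_k,B_n)$ via $v \leq w \iff f_v \geq f_w$; equivalently, $u \mapsto f_u$ is an order-\emph{reversing} embedding of the medium roast poset into the quilt lattice. The first step I would carry out is to make the injectivity of $u \mapsto f_u$ explicit, so that the word ``subposet'' is justified: by \Cref{lem:medium.roats.quilt.lattice}, a generic $M \in C_u$ satisfies $f_M = f_u$, and $f_M$ determines that the configuration of $M$ lies in the cell $C_u$. Hence $f_v = f_w$ would force a generic point of $C_w$ to lie in $C_v$ as well, and disjointness of the cells $C_w$ forces $v = w$.

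With the order-reversing injection in hand, the interval statement is pure transport of structure. For $u \in W_{n,k}$, the chain of equivalences
\[
u \in [v,w] \iff v \leq u \leq w \text{ (medium roast)} \iff f_v \geq f_u \geq f_w \text{ (quilt lattice)}
\]
shows that $u$ lies in the medium roast interval exactly when its quilt $f_u$ lies in the quilt-lattice interval between $f_w$ and $f_v$; note that the inequality $f_v \geq f_w$ reverses the roles of the endpoints, so $[f_v,f_w]$ here denotes the set $\{g : f_w \leq g \leq f_v\}$. This proves the first assertion, with the crucial caveat that the embedding is only a bijection onto its image. The main point requiring care, and what I would flag as the genuine content beyond \Cref{cor:medium.roast.quilts}, is that the quilt interval $[f_v,f_w]$ will in general contain quilts that are \emph{not} of the form $f_u$ for any Fubini word; one must therefore intersect the interval with the (proper) image $\{f_u : u \in W_{n,k}\}$ and keep only those quilts realized by some $u$. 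This intersection is precisely the step phrased as ``identifying all of the quilts in $[f_v,f_w]$ which correspond to some Fubini word.''

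For the cover statement I would specialize to open intervals. Since $u \mapsto f_u$ is an order-reversing injection, there exists $u \in W_{n,k}$ with $v < u < w$ in the medium roast order if and only if there exists such a $u$ with $f_w < f_u < f_v$ in the quilt lattice, i.e.\ the open quilt interval $(f_v,f_w)$ contains some $f_u$. Therefore $v \lessdot w$, which is the statement that the open medium roast interval $(v,w)$ is empty, is equivalent to $(f_v,f_w)$ containing no $f_u$ for $u \in W_{n,k}$. I do not expect a deep obstacle here, since the entire order-theoretic content already lives in \Cref{cor:medium.roast.quilts}; the only places needing vigilance are the order reversal in the endpoints and, most importantly, the fact that membership tests must be carried out against the image $\{f_u\}$ of the embedding rather than against the full quilt interval.
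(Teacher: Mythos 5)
Your proposal is correct and matches the paper's (implicit) reasoning: the corollary is stated as an immediate consequence of \Cref{cor:medium.roast.quilts}, and your argument---transporting intervals across the order-reversing injection $u \mapsto f_u$, intersecting with the image $\{f_u : u \in W_{n,k}\}$, and characterizing covers by emptiness of the open interval---is exactly the intended transport-of-structure proof. Your added care about injectivity of $u \mapsto f_u$ and about the reversal of endpoints in $[f_v,f_w]$ is appropriate and consistent with the paper's conventions.
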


\begin{remark}
Finding a concise description of all covering relations in the medium
roast order on $W_{n,k}$ is still an open problem as of the writing of
this paper.  Can the embedding into the quilt lattice
$\quilt(C_k,B_n)$ be used to find such a characterization?
\end{remark}


\section{Properties of the quilt lattice} \label{sec:lattice}

In this section, we prove some further properties of the lattice
$\quilt(P,Q)$ using natural involutions and embeddings.  We begin with
some examples of how the CSM/ASM poset, the medium roast poset, and a
natural poset on matroids all embed into quilt lattices.

Observe from the definitions that for all posets $P,Q$ with $\rank P =
k$ and $\rank Q = n$, the map
\begin{equation}\label{eq:iota}
\iota \colon \CSM_{k,n} \longto \quilt(P,Q), \qquad \iota(f)(x,y) = f(\rank x,\ \rank y)
\end{equation}
 is a lattice embedding.  This embedding is not necessarily
surjective; in fact, it can be quite sparse.  For example,
\Cref{fig:nk32} shows the lattice $\quilt(C_2,B_3)$, which has $199$
elements. For $k\leq n$, we illustrate that $\quilt(C_k,B_n)$ contains
(among others) the following three overlapping subposets.
\begin{itemize}
 \item ASMs/CSMs of size $k \times n$, embedded via $\iota$ from
\eqref{eq:iota} (the seven ASMs of size $2 \times 3$ are
marked with red and purple);
\item The Fubini words $W_{n,k}$ with the medium roast order on Fubini words are embedded
into $\quilt(C_k,B_n)$ by \Cref{lem:medium.roats.quilt.lattice}.  The
six Fubini words of length $3$ with letters $1$ and $2$ are marked
with blue and purple.
\item Matroids on ground set $[n]$ with rank $k$ are embedded into
$\quilt(C_k,B_n)$ via the map that sends a matroid on $[n]$ to the
quilt $f:C_{k}\times B_{n} \longrightarrow \mathbb{N}$ with $f(i,T) =
\min\{i,\ \rank T\}$, where $\rank T$ is the cardinality of the largest
independent set contained in $T$. The seven matroids on $[3]$ with
rank $2$ are marked as squares.

\end{itemize}

\begin{figure}[!ht]
\centering
\includegraphics[width=.95\linewidth]{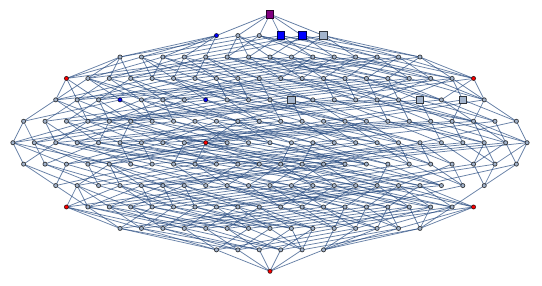}
\caption{The Hasse diagram of $\quilt(C_2,B_3)$.}
\label{fig:nk32}
\end{figure}

The rank functions of flag matroids $\mathcal{M}$ on the ground set
$[n]$ of type $(k_1< \cdots < k_s)$ and rank functions
$\rank_1,\ldots, \rank_s$ can also be encoded as a Boolean-chain
quilt. Specifically, the embedding into $\quilt(C_{k_{s}},B_n)$ is the
map that sends $\mathcal{M}$ to $f_{\mathcal{M}}$, where
\[
f_{\mathcal{M}}(i,X) = \begin{cases}
\min\{ i, \rank_1 X\} &  1\leq i\leq k_{1}  \\
\min\{ \rank_j X +i-k_{j} , \rank_{j+1} X\} & k_{j}< i\leq k_{j+1}, 1 \leq j \leq s-1.
\end{cases}
\]

One can verify $f_{\mathcal{M}}$ satisfies the properties of a quilt
from the definitions in \Cref{sub:matroids} as follows.
By definition,
$f_{\mathcal{M}}(0,X)=f_{\mathcal{M}}(i,\emptyset)=\rank_j(\emptyset)
=0$ for all $i,j,X$ and $f_{\mathcal{M}}(k_{s},[n])=
\rank_{s}[n]=k_{s}$.  Matroid rank functions satisfy Boolean growth,
hence $f_{\mathcal{M}}(i,X\cup \{y \})-f_{\mathcal{M}}(i,X) \in \{0,1
\}$ for all $y \in [n]\setminus X$.  Finally, observe that $\min\{
\rank_j X +i-k_{j},\ \rank_{j+1} X\}$ exhibits Boolean growth as a
function of $i$ since $\rank_j X\leq \rank_{j+1} X$ and because
$M_{j}$ being a quotient of $M_{j+1}$ implies $\rank_j X + k_{j+1} -
k_j \geq \rank_{j+1} X$ by \eqref{eq:flag.matroid.rank.inequality}.

Under certain circumstances, (anti)automorphisms of the posets $P$ and
$Q$ give us (anti)auto\-mor\-phisms of the quilt lattice. Here an
antiautomorphism of a ranked poset $P$ is an isomorphism between $(P,\leq)$
and its dual poset $(P,\geq)$.  If there exists an involutive antiautomorphism on $P$, there is a dihedral group
$D_4$ action on $\quilt(P,P)$, like with square ASMs.

\begin{theorem}
 Let $P$ and $Q$ be finite ranked posets with least and greatest
elements.
 \begin{enumerate}
 \item The switch map
 $$\Sigma \colon \quilt(P,Q) \to \quilt(Q,P), \quad \text{where }\Sigma(f)(x,y) = f(y,x)$$
 is an involutive lattice isomorphism.
 \item If $\gamma$ is an automorphism of $P$, then
 $$\Gamma \colon  \quilt(P,Q) \to \quilt(P,Q), \quad \text{where }\Gamma(f)(x,y) = f(\gamma(x), y)$$
 is an automorphism of the lattice $\quilt(P,Q)$.
 \item If $\varphi$ is an (involutive) antiautomorphism of $P$ and $\rank P \geq \rank Q$, then
 $$\Phi \colon \quilt(P,Q) \to \quilt(P,Q), \quad \text{where } \Phi f(x,y) = \rank y - f(\varphi(x), y)$$
 is an (involutive) antiautomorphism of the lattice $\quilt(P,Q)$.
 \item Given an involutive antiautomorphism $\varphi \colon P \to P$, there is an action of the dihedral group $D_4$ acting on $\quilt(P,P)$ that sends the horizontal reflection of the square to $\Phi$ and the diagonal reflection of the square to $\Sigma$. If $\rank P \geq 2$, the action is faithful.
 \end{enumerate}
\end{theorem}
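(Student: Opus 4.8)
The plan is to dispatch parts (1)--(3) by direct verification and then bootstrap (4) from them using the presentation of $D_4$ as a reflection group. For (1)--(3), in each case I would first check that the image is again a quilt by verifying the three conditions of \Cref{def:quilt}, then check that the map is order-preserving (for $\Sigma,\Gamma$) or order-reversing (for $\Phi$), and finally exhibit the inverse. For $\Sigma$ the only point is that a cover $(x,y)\lessdot(x',y')$ in $Q\times P$ corresponds to the cover $(y,x)\lessdot(y',x')$ in $P\times Q$, so Boolean growth transfers verbatim and $\Sigma^2=\id$ is immediate. For $\Gamma$ I would use that an automorphism $\gamma$ fixes $\hat0_P,\hat1_P$ and sends covers to covers, with inverse built from $\gamma^{-1}$. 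The only genuinely non-formal verification is (3): here $\varphi$ is an antiautomorphism, so $\varphi(\hat0_P)=\hat1_P$, $\varphi(\hat1_P)=\hat0_P$, and $\rank\varphi(x)=\rank P-\rank x$; the vanishing of $\Phi f$ on $\{\hat0_P\}\times Q$ is exactly $\Phi f(\hat0_P,y)=\rank y-f(\hat1_P,y)=\rank y-\rank_Q y=0$, which is where the hypothesis $\rank P\ge\rank Q$ enters through \Cref{lem:chains.bounds}. Boolean growth splits into the two kinds of covers and uses that $\varphi$ reverses covers in the $P$-direction while preserving them in the $Q$-direction; order-reversal is clear, the inverse comes from $\varphi^{-1}$, and involutivity follows from $\varphi^2=\id$.

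For (4) I would take $P=Q$, so that (1) makes $\Sigma$ an involutive lattice automorphism and (3) (with $\rank P\ge\rank P$) makes $\Phi$ an involutive lattice antiautomorphism of $\quilt(P,P)$. Writing $h=\Phi$ and $d=\Sigma$ for the horizontal and diagonal reflections, I would realize the action through the Coxeter presentation $D_4=\langle h,d\mid h^2=d^2=(hd)^4=\id\rangle$: the relations $h^2=d^2=\id$ are parts (3) and (1), so the whole content is $(\Phi\Sigma)^4=\id$. Setting $\rho:=\Phi\Sigma$ (the quarter-turn), I would compute from the explicit formulas that $\rho(f)(x,y)=\rank y-f(y,\varphi(x))$ and then, using $\varphi^2=\id$ and $\rank\varphi(x)=\rank P-\rank x$, obtain the clean half-turn formula $\rho^2(f)(x,y)=\rank x+\rank y-\rank P+f(\varphi(x),\varphi(y))$; one more iteration gives $\rho^4=\id$. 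This produces a homomorphism from $D_4$ into the group of lattice (anti)automorphisms of $\quilt(P,P)$, i.e.\ the desired action, sending the diagonal reflection to $\Sigma$ and the horizontal reflection to $\Phi$.

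Faithfulness is the crux. Since $D_4$ is a $2$-group, every nontrivial normal subgroup meets its center $\{\id,\rho^2\}$, so the homomorphism is injective as soon as the half-turn $\rho^2$ acts nontrivially. I would prove $\rho^2\neq\id$ by producing a single asymmetric quilt. Using \Cref{thm:lattice} and its difference graph $G_\Delta(f_{\hat0},f_{\hat1})$, the quilts covering $f_{\hat0}$ are obtained by raising $f_{\hat0}$ by $1$ at a sink $(x_0,y_0)$; for such an atom $f$ the formula above gives $\rho^2(f)=f_{\hat0}+\mathbf{1}_{(\varphi(x_0),\varphi(y_0))}$ (as $\rho^2$ fixes $f_{\hat0}$), so $\rho^2(f)=f$ precisely when $(x_0,y_0)$ is fixed by $\varphi\times\varphi$. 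Thus it suffices to find an incrementable position not fixed by $\varphi\times\varphi$.

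The hard part is exactly this existence statement, and it is where the rank hypothesis is essential. When $P$ has an atom $a$ and a coatom $c$ of distinct ranks (so that $\varphi(a)\neq a$, since $\varphi$ interchanges atoms and coatoms), I would check that raising $f_{\hat0}$ at $(a,c)$ yields a valid quilt—one verifies Boolean growth against the four neighboring covers directly—producing a position with $\varphi(a)\neq a$ and hence $\rho^2\neq\id$. I expect the delicate bookkeeping to be confirming that some such position is genuinely a sink, and in particular handling posets of very small rank, where $\varphi$ may fix every low-rank element and the half-turn can collapse to the identity (already for $P=C_2$, where $\quilt(C_2,C_2)$ has only two elements); pinning down the precise rank threshold that forces an asymmetric atom to exist is the one step I would treat most carefully.
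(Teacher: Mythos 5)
Your handling of (1)--(3) and your verification of $(\Sigma\circ\Phi)^4=\id$ match the paper's proof, which performs the same checks and the same four-step computation using $\rank x+\rank\varphi(x)=\rank P$. Where you genuinely diverge is faithfulness. The paper restricts the $D_4$ action to the sublattice $\iota(\CSM_{k,k})\subseteq\quilt(P,P)$ of rank-constant quilts, identifies the restricted action with the usual $D_4$ action on $k\times k$ ASMs, and invokes faithfulness of the latter for $k\geq 2$. Your route --- the kernel is normal, every nontrivial normal subgroup of a $2$-group meets the center $\{\id,\rho^2\}$, so faithfulness reduces to $\rho^2\neq\id$, tested on atoms via the half-turn formula $\rho^2(f)(x,y)=\rank x+\rank y-\rank P+f(\varphi(x),\varphi(y))$ --- is sharper, because it isolates exactly what must be exhibited. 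Your construction does close the case $\rank P\geq 3$: for an atom $a$ and coatom $c$ of $P$, the function $f_{\hat 0}+\mathbf{1}_{(a,c)}$ is a quilt (all four relevant cover relations behave), $\rho^2$ carries it to $f_{\hat 0}+\mathbf{1}_{(\varphi(a),\varphi(c))}$, and $\varphi(a)\neq a$ since $\rank\varphi(a)=\rank P-1\neq 1$.

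The step you flag as delicate at $\rank P=2$ is not a gap you failed to close; it is a point where the statement itself fails, and the paper's proof fails with it. For $P=C_2$ the lattice $\quilt(C_2,C_2)\cong\ASM_{2,2}$ has exactly two elements, so no action of the order-$8$ group $D_4$ on it can be faithful; concretely, $\varphi$ fixes the unique middle element of $C_2$, so $\rho^2=\id$. The same example undercuts the paper's supporting claim (also asserted in its Section 2) that $D_4$ acts faithfully on $n\times n$ ASMs for all $n\geq 2$: the $180^\circ$ rotation fixes both $2\times 2$ ASMs. The correct hypothesis is $\rank P\geq 3$, or $\rank P=2$ together with the requirement that $\varphi$ move some rank-$1$ element (which holds for $B_2$ with complementation but not for $C_2$). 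So your argument is sound where it applies, and your suspicion about rank $2$ is not a defect of your proof but a counterexample to the theorem as stated.
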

\begin{proof}
 Parts (1) and (2) are straightforward to verify from
\Cref{def:quilt}. For (3) one can verify the conditions of
\Cref{def:quilt} as follows. Note that we have $\Phi f(x,\hat 0_Q) =
\rank \hat 0_Q - f(\varphi(x), \hat 0_Q) = 0$, $\Phi f(\hat 0_P,y) =
\rank y - f(\varphi(\hat 0_P),y) = \rank y - f(\hat 1_P,y) = \rank y -
\rank y = 0$ and $\Phi f(\hat 1_P, \hat 1_Q) = \rank \hat 1_Q -
f(\varphi(\hat 1_P),\hat 1_Q) = \rank Q - f(\hat 0_{P}, \hat 1_Q) =
\min \{\rank P,\ \rank Q\}$. If $x \lessdot x'$, then $\varphi(x')
\lessdot \varphi(x)$, so $\Phi f(x',y) - \Phi f(x, y) = f(\varphi(x),
y) - f(\varphi(x'), y) \in \{0,1\}$. If $y \lessdot y'$, then
$1=\rank y' - \rank y$, so
\[
\Phi f(x,y') - \Phi f(x,y) = 1 - (f(\varphi(x), y') - f(\varphi(x),
y)) \in \{0,1\}.
\]
That means that $\Phi f \in \quilt(P,Q)$. If $f \leq g$, then $\Phi
f(x,y) = \rank y - f(\varphi(x),y) \geq \rank y - g(\varphi(x),y) =
\Phi g(x,y)$. If $\lambda$ is the inverse of $\varphi$, then $\Lambda
\colon \quilt(P,Q) \to \quilt(P,Q)$, where $(\Lambda f)(x,y) = \rank y
- f(\lambda(x),y))$, is easily seen to be the inverse of $\Phi$. In
particular, if $\varphi$ is an involution, so is $\Phi$.

To prove (4), compute $$(\Sigma \circ \Phi) f(x,y) = \Phi f(y,x) =
\rank x - f(\varphi(y), x),$$

$$(\Sigma \circ \Phi)^2 f(x,y) = \rank x - (\Sigma \circ \Phi)
f(\varphi(y),x) = \rank x - \rank \varphi(y) +
f(\varphi(x),\varphi(y)),$$

$$(\Sigma \circ \Phi)^3 f(x,y) = \rank x - (\Sigma \circ \Phi)^2
f(\varphi(y),x) = \rank x - \rank \varphi(y) + \rank \varphi(x) -
f(y,\varphi(x)),$$

$$(\Sigma \circ \Phi)^4 f(x,y) = \rank x - (\Sigma \circ \Phi)^3
f(\varphi(y),x) = \rank x - \rank \varphi(y) + \rank \varphi(x) -
\rank y + f(x,y),$$

\noindent and because $\rank x + \rank \varphi(x) = \rank y
+ \rank \varphi(y) = \rank P$, we know $(\Sigma \circ \Phi)^4 =
\id$. Therefore $D_4$ acts on $\quilt(P,P)$.

Finally, $\quilt(P,P)$ contains a representative of all square CSMs of
size $\rank P \times \rank P$ under the map $\iota$ defined in \eqref{eq:iota}, and the symmetries of the square act
faithfully on this subset if $\rank P \geq 2$.  Therefore, they also act faithfully on
$\quilt(P,P)$.
\end{proof}

\begin{example}
 Any permutation of the labels gives an automorphism of $B_k$, which gives many automorphisms of $\quilt(B_k,Q)$ for any $Q$. The maps $i \mapsto k - i$ and $T \mapsto [k] \setminus T$ are involutive antiautomorphisms on $C_k$ and $B_k$, which gives involutive automorphisms of $\quilt(C_k,Q)$ and $\quilt(B_k,Q)$ for $\rank Q \leq k$ and a faithful $D_4$ action on $\quilt(B_n,B_n)$ for $n \geq 2$.
\end{example}

Take posets $P_1$ and $P_2$ with the same rank. The \emph{disjoint union} $P_1 + P_2$ is the poset we get my ``merging'' $\hat 0_{P_1}$ with $\hat 0_{P_2}$ and $\hat 1_{P_1}$ with $\hat 1_{P_2}$, and adding the other elements of $P_1$ and $P_2$ without any new relations. For example, $A_2(j_1) + A_2(j_2)$ is isomorphic to $A_2(j_1 + j_2)$. Write $j P$ for the disjoint union of $j$ copies of $P$. For example, $A_2(j) = j C_2$.

\begin{prop}
 Assume that $\rank P_1 = \rank P_2 \geq \rank Q$. Then the map
 $$\Theta \colon \quilt(P_1 + P_2, Q) \longto \quilt(P_1, Q) \times \quilt(P_2, Q)$$
 defined by
 $$f \mapsto (f_1,f_2), \qquad f_i(x_i,y) = f(x_i,y) \text{ for } x_i \in P_i, y \in Q,$$
 is an isomorphism of lattices.
\end{prop}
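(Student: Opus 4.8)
The plan is to exhibit an explicit inverse to $\Theta$ and check that both $\Theta$ and its inverse are well-defined and order-preserving, which together give a lattice isomorphism since an order isomorphism between lattices automatically respects meet and join. First I would verify that $\Theta$ is well-defined, i.e. that each $f_i$ is a genuine quilt in $\quilt(P_i,Q)$. This is essentially immediate from \Cref{def:quilt}: the least- and greatest-element conditions and the Boolean growth rule for $f_i$ are all inherited from the corresponding conditions for $f$ on $P_i \times Q \subseteq (P_1+P_2) \times Q$, using that $\hat 0_{P_i} = \hat 0_{P_1+P_2}$, $\hat 1_{P_i} = \hat 1_{P_1+P_2}$, and that every cover relation inside $P_i \times Q$ is a cover relation in $(P_1+P_2)\times Q$ (no new relations are introduced by the disjoint union).

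Next I would define the candidate inverse $\Theta^{-1}(g_1,g_2) = f$ by gluing: for $x$ in the $P_i$-part set $f(x,y) = g_i(x,y)$. The one thing to check for consistency is that the two definitions agree on the shared elements $\hat 0 = \hat 0_{P_1} = \hat 0_{P_2}$ and $\hat 1 = \hat 1_{P_1} = \hat 1_{P_2}$; on $\hat 0$ both give $0$, and on $\hat 1$ both give $\min\{\rank P_i, \rank Q\} = \rank_Q y$ for every $y$ (here the hypothesis $\rank P_1 = \rank P_2 \geq \rank Q$ is used, via \Cref{lem:chains.bounds}, to force $g_1(\hat 1, y) = g_2(\hat 1, y) = \rank_Q y$, so the glued value is unambiguous). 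I would then confirm the glued $f$ is a quilt: the boundary conditions hold because they hold for each $g_i$, and the Boolean growth rule holds on every cover of $(P_1+P_2)\times Q$ because every such cover lies entirely within one $P_i \times Q$, where $g_i$ already satisfies growth.

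The main obstacle, and the crux of the argument, is precisely this point: I must argue that the disjoint union construction introduces no cover relations straddling $P_1$ and $P_2$ other than those through $\hat 0$ and $\hat 1$, so that no compatibility constraint linking $g_1$ and $g_2$ is lost. Concretely, in $P_1 + P_2$ an element of rank $\geq 1$ from $P_1$ is incomparable to one of rank $\leq \rank P - 1$ from $P_2$, so the only covers involving elements of both posets are $\hat 0 \lessdot a$ for atoms $a$ and $c \lessdot \hat 1$ for coatoms $c$, each of which stays within a single $P_i$. This is why $\rank P_1 = \rank P_2$ is essential: if the ranks differed there would be no common $\hat 1$, and the gluing would fail. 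Having established this, the growth condition decouples completely across the two pieces, so $f$ is a valid quilt and $\Theta^{-1}$ is well-defined.

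Finally I would note that $\Theta$ is manifestly a bijection with inverse $\Theta^{-1}$, and that both directions preserve the partial order: $f \leq f'$ in $\quilt(P_1+P_2,Q)$ holds iff $f(x,y)\leq f'(x,y)$ for all $(x,y)$, which by splitting the domain is equivalent to $f_i \leq f_i'$ for $i=1,2$, i.e.\ $\Theta(f) \leq \Theta(f')$ in the product order on $\quilt(P_1,Q)\times\quilt(P_2,Q)$. An order isomorphism between lattices is automatically a lattice isomorphism (it carries meets to meets and joins to joins), and the meet and join in a product lattice are computed coordinatewise, which matches the pointwise meet and join in $\quilt(P_1+P_2,Q)$ described in \Cref{thm:lattice}. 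This completes the plan.
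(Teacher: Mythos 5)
Your proposal is correct and follows essentially the same route as the paper: the crux in both is that the hypothesis $\rank P_1 = \rank P_2 \geq \rank Q$ forces $g_i(\hat 1_{P_i},y) = \rank y$ via \Cref{lem:chains.bounds}, so the glued map is consistent on the two shared elements and every cover relation (hence the Boolean growth check) decouples into the two pieces. The paper simply compresses everything except surjectivity into ``the only non-trivial part,'' whereas you spell out injectivity and order-preservation explicitly; the content is the same.
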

\begin{proof}
  The only non-trivial part to prove is that the map is
  surjective. Say that we are given $f_1 \in \quilt(P_1,Q)$ and $f_2
  \in \quilt(P_2,Q)$. Because $\rank P_i \geq \rank Q$, $f_i(\hat
  1_{P_i},y) = \rank y$ for all $y \in Q$. That means that $f_1$ and
  $f_2$ are compatible in the only two ``common'' elements of $P_1$
  and $P_2$ in $P_1+P_2$, and the map $f:(P_1 + P_2)\times  Q
  \longrightarrow \mathbb{N}$ given by
  $$f(x,y) = \left\{ \begin{array}{ccl} f_1(x,y) & : & x \in P_1 \\ f_2(x,y) & : & x \in P_2 \end{array} \right.$$
  is a well-defined quilt and maps to $(f_1,f_2)$.
\end{proof}

\begin{remark}\label{rem:sums}
Without the assumption that $\rank P_1 = \rank P_2 \geq \rank Q$, the
map $\Theta$ is still a well-defined injective homomorphisms of
lattices, but is not necessarily a surjection. Therefore, we have
$|\quilt(P_1 + P_2, Q)| \leq |\quilt(P_1, Q)| \cdot |\quilt(P_2, Q)|$.
\end{remark}

\begin{cor} \label{cor:powerofasm}
 For $k \geq n$ and arbitrary positive integer $j$, $|\quilt(j C_k,
C_n)| = |\ASM_{k \times n}|^{j}$. For any $n,i,j$, we have $|\quilt(i
C_n, j C_n)| = |\ASM_{n \times n}|^{ij}$.
\end{cor}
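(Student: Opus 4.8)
The plan is to derive both statements from \Cref{rem:sums} combined with \Cref{cor:powerofasm}'s own first half, using the disjoint-union decomposition $jP = \underbrace{P + \dots + P}_{j}$ together with the fact that the decomposition becomes a genuine isomorphism precisely when the rank hypothesis of the preceding proposition is met. Concretely, I would first record the base identification: for a chain quilt of type $(C_k, C_n)$, \Cref{def:CSM} and \Cref{rem:ASMquiltname} tell us that $\quilt(C_k, C_n) = \CSM_{k,n}$, and the bijection between CSMs and ASMs gives $|\quilt(C_k, C_n)| = |\ASM_{k,n}|$. This is the $j=1$ instance, and it is where the ASM count actually enters the statement.

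For the first claim, I would invoke the proposition immediately preceding the corollary (the isomorphism $\Theta$) with $P_1 = P_2 = C_k$ and $Q = C_n$. Since $k \geq n$ we have $\rank C_k = k = \rank C_{k} \geq n = \rank C_n$, so the rank hypothesis $\rank P_1 = \rank P_2 \geq \rank Q$ holds, and $\Theta$ is a genuine lattice isomorphism rather than merely an injection. Iterating this $j-1$ times, or equivalently applying it to the decomposition $jC_k = C_k + (j-1)C_k$ by induction on $j$, yields
$$|\quilt(j C_k, C_n)| = |\quilt(C_k, C_n)|^{j} = |\ASM_{k,n}|^{j},$$
which is the first assertion (matching the notation $|\ASM_{k\times n}|$ in the statement). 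The induction step is routine: the hypothesis $\rank P_1 = \rank P_2 \geq \rank Q$ is preserved because every summand is a copy of $C_k$ of rank $k \geq n$.

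For the second claim, set $k = n$ so that the chains are square and $\rank C_n = n = \rank C_n$, and one has $|\quilt(C_n, C_n)| = |\ASM_{n,n}|$ from the base identification. Here I would apply $\Theta$ in both coordinates. First decompose $iC_n = C_n + \dots + C_n$ on the left to get $|\quilt(iC_n, C_n)| = |\ASM_{n,n}|^i$ as before. Then, to pass to $jC_n$ in the second argument, I would use the switch isomorphism $\Sigma$ from \Cref{thm:lattice}'s companion theorem (part (1)), which gives $\quilt(iC_n, jC_n) \cong \quilt(jC_n, iC_n)$, and apply $\Theta$ a second time; alternatively one can note that $\Theta$ applies symmetrically in $Q$ when $\rank Q = \rank P$. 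Either route gives $|\quilt(iC_n, jC_n)| = |\ASM_{n,n}|^{ij}$.

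The main obstacle, and the one point requiring genuine care, is verifying that the rank hypothesis of the proposition is met at every stage so that $\Theta$ is surjective and not merely injective — this is exactly what forces the hypothesis $k \geq n$ in the first claim and $k = n$ in the second. For the second claim the subtlety is that one must enlarge the rank comparison to both posets being copies of $C_n$, so I would make sure $iC_n$ and $jC_n$ both have rank $n$ (independent of $i$ and $j$) before applying $\Theta$ on each side; since the disjoint union merges $\hat 0$ and $\hat 1$ without changing the rank, this holds automatically, and the calculation goes through.
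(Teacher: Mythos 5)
Your proposal is correct and follows essentially the route the paper intends: the corollary is stated without proof precisely because it is an immediate iteration of the preceding proposition on $\Theta \colon \quilt(P_1+P_2,Q) \to \quilt(P_1,Q)\times\quilt(P_2,Q)$ (whose rank hypothesis you correctly verify in each application), combined with the switch isomorphism $\Sigma$ and the identification $|\quilt(C_k,C_n)| = |\CSM_{k,n}| = |\ASM_{k,n}|$. The only cosmetic issue is the opening sentence's circular-sounding reference to the corollary's own first half; the argument itself does not depend on it.
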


\begin{prop}
 Assume that a map $\psi \colon Q' \to Q$ has the following properties:
 \begin{itemize}
  \item $\psi$ is surjective,
  \item $x \lessdot y \Rightarrow \psi(x) = \psi(y) \mbox{ or } \psi(x) \lessdot \psi(y)$.
 \end{itemize}
 For $P$ with $\rank P \leq \rank Q$, the map
 $$\Psi \colon \quilt(P,Q) \longto \quilt(P,Q'), \qquad \text{where }
 \Psi f(x,y') = f(x,\psi(y')) \text{ for all } f \in \quilt(P,Q)$$
 is an injective lattice homomorphism.
\end{prop}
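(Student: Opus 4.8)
The plan is to verify in turn that $\Psi$ is well-defined (that it lands in $\quilt(P,Q')$), that it is a lattice homomorphism, and that it is injective. The only genuinely nontrivial input is a rank comparison needed to match the normalization condition at $(\hat 1_P, \hat 1_{Q'})$; everything else reduces to the definitions.

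First I would record some elementary consequences of the two hypotheses on $\psi$. From the cover condition, $\psi$ is order preserving: a saturated chain $x = w_0 \lessdot \dots \lessdot w_\ell = y$ in $Q'$ maps under $\psi$ to a weakly increasing sequence, so $x \leq y$ implies $\psi(x) \leq \psi(y)$. Combined with surjectivity, this forces $\psi(\hat 0_{Q'}) = \hat 0_Q$ and $\psi(\hat 1_{Q'}) = \hat 1_Q$, since $\psi(\hat 0_{Q'})$ (respectively $\psi(\hat 1_{Q'})$) is then a lower (respectively upper) bound for the whole image $Q$. Next I would compare ranks: applying $\psi$ to a maximal chain of $Q'$ of length $\rank Q'$ produces a weakly increasing sequence from $\hat 0_Q$ to $\hat 1_Q$ whose distinct consecutive values form a saturated chain in $Q$, hence contain exactly $\rank Q$ cover steps; as these occur among the $\rank Q'$ steps of the original chain, we get $\rank Q' \geq \rank Q \geq \rank P$.

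With these facts in hand, checking the three conditions of \Cref{def:quilt} for $\Psi f$ is routine. The boundary condition holds because $\Psi f(\hat 0_P, y') = f(\hat 0_P, \psi(y')) = 0$ and $\Psi f(x, \hat 0_{Q'}) = f(x, \hat 0_Q) = 0$, using $\psi(\hat 0_{Q'}) = \hat 0_Q$. The normalization uses both established facts: $\Psi f(\hat 1_P, \hat 1_{Q'}) = f(\hat 1_P, \hat 1_Q) = \min\{\rank P, \rank Q\} = \rank P$, and since $\rank P \leq \rank Q'$ this equals $\min\{\rank P, \rank Q'\}$. For Boolean growth I would split a cover $(x,y') \lessdot (x'',y'')$ in $P \times Q'$ into its two types: a cover in the $P$-coordinate transports to a cover in $P \times Q$ and inherits the growth bound from $f$, while a cover $y' \lessdot y''$ maps under $\psi$ either to an equality, forcing $\Psi f(x,y'') - \Psi f(x,y') = 0$, or to a cover in $Q$, again inheriting the bound from $f$. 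This case split, driven by the second hypothesis on $\psi$, is where I expect the only real care to be needed.

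Finally, the homomorphism and injectivity claims are immediate. Since meets and joins in $\quilt(P,Q)$ are computed pointwise (as in the proof of \Cref{thm:lattice}) and $\Psi$ is defined by precomposition with $\id \times \psi$, it commutes with pointwise $\min$ and $\max$; hence $\Psi(f \wedge g) = \Psi f \wedge \Psi g$ and $\Psi(f \vee g) = \Psi f \vee \Psi g$, so $\Psi$ is a lattice homomorphism, and in particular order preserving. For injectivity, if $\Psi f = \Psi g$ then $f(x,\psi(y')) = g(x,\psi(y'))$ for all $x \in P$, $y' \in Q'$, and since $\psi$ is surjective every $y \in Q$ equals some $\psi(y')$, giving $f = g$. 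Thus the rank bound $\rank Q' \geq \rank Q$ is the main obstacle, and the remaining steps follow directly.
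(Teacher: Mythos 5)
Your proposal is correct and follows essentially the same route as the paper: derive $\psi(\hat 0_{Q'})=\hat 0_Q$, $\psi(\hat 1_{Q'})=\hat 1_Q$ and $\rank Q'\geq\rank Q$ from the hypotheses, verify the three quilt axioms with the same case split on covers, and use surjectivity of $\psi$ for injectivity. Your treatment of the lattice-homomorphism step (observing that $\Psi$ is precomposition and hence commutes with the pointwise meets and joins) is in fact slightly more complete than the paper's, which only verifies order preservation.
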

\begin{proof}
 Let us first check that $\Psi f$ is indeed a quilt. It follows from
the assumptions that $\psi(\hat 0_{Q'}) = \hat 0_Q$ and $\psi(\hat
1_{Q'}) = \hat 1_Q$, so $\rank Q \leq \rank Q'$.  We have $\Psi f(\hat
0_P,y) = f(\hat 0_P,\psi(y)) = 0$, $\Psi f(x, \hat 0_{Q'}) =
f(x,\psi(\hat 0_{Q'})) = f(x, \hat 0_Q) = 0$ and $$\Psi f(\hat 1_P,
\hat 1_{Q'}) = f(\hat 1_P,\hat 1_Q) = \min\{\rank P,\ \rank Q\} = \rank
P = \min\{\rank P,\ \rank Q'\}.
$$
If $x_1 \lessdot x_2$ in $P$, then $\Psi f(x_2,y') = f(x_2,\psi(y'))$
is either $f(x_1,\psi(y'))$ or $f(x_1,\psi(y')) + 1$. On the other
hand, if $y'_1 \lessdot y'_2$ in $Q'$, then $\psi(y'_1) = \psi(y'_2)$
or $\psi(y'_1) \lessdot \psi(y'_2)$. In both cases, either $\Psi
f(x,y'_2) = \Psi f(x,y'_1)$ or $\Psi f(x,y'_2) = \Psi f(x,y'_1) +
1$. That completes the verification that $\Psi f$ is a quilt.

 If $f \leq g$ in $\quilt(P,Q)$, then for $x \in P$, $y' \in Q'$,
$\Psi f(x,y') = f(x,\psi(y')) \leq g(x,\psi(y')) = \Psi g(x,y')$, so
$\Psi f \leq \Psi g$. If $\Psi f = \Psi g$ and $x \in P$, $y \in Q$
are arbitrary, then $y = \psi(y')$ for some $y' \in Q'$ by the
surjectivity of $\psi$, and $f(x,y) = \Psi f(x,y') = \Psi g(x,y') =
g(x,y)$. That means that $\Psi$ is an injective homomorphism on two
quilt lattices.
\end{proof}

\newcommand{\mm}{n}
\newcommand{\mn}{m}

\begin{example}
 Take $\mn \leq \mm$. The following maps have the required
properties: \begin{itemize} \item $\psi_1 \colon C_\mm \to C_\mn$,
$\psi(i) = \min\{i,\mn\}$, \item $\psi_2 \colon B_\mm \to B_\mn$,
$\psi(T) = T \cap [\mn]$ \end{itemize} We therefore have injective
lattice homomorphisms $$\Psi_1 \colon \quilt(P, C_\mn) \longto
\quilt(P, C_\mm) \mbox{ and } \Psi_2 \colon \quilt(P, B_\mn) \longto
\quilt(P, B_\mm).$$ for $\rank P \leq \mn$. Since $\psi_1$ has the
added property that $\psi_1(i) = \psi_1(j) \Rightarrow i = j \mbox{ or
} \psi_1(i) = \psi_1(j) = \mn$, $\Psi_1$ preserves cover relations.

Thus, the embedding of $\quilt(P, C_\mn)$ into $\quilt(P, C_\mm)$ is
\textit{isometric}, meaning $\rank g - \rank f = \rank \Psi g - \rank
\Psi f$ for all $f,g$. For $P = C_k$, $k \leq \mn \leq \mm$, this is equivalent to
 adding $\mm-\mn$ zero columns to a $k \times \mn$ ASM to get a $k
 \times \mm$ ASM. The map $\Psi_2$ does not preserve covering relations: take quilts $f,g \in \quilt(C_2,B_2)$ satisfying $f(1,\{1\}) = 0$, $g(1,\{1\}) = 1$ and $f \lessdot g$; then $\Psi_2 f(1,\{1\}) = \Psi_2 f(1,\{1,3\}) =0$, $\Psi_2 g(1,\{1\}) = \Psi_2 g(1,\{1,3\}) = 1$, so $\Psi_2 f \not\!\!\lessdot \Psi_2 g$.
\end{example}

\section{Enumeration of antichain quilts} \label{sec:antichain}

In this section, we consider the case of counting the number of quilts
of type $(P,Q)$ when $Q$ is an antichain poset.  The enumeration is in
terms of the number of antichains in convex cut sets of $P$.  We begin
by defining the necessary vocabulary and notation.  Several specific
examples are included following the corollaries.

We say that a subset $S$ of a poset $P$ is \emph{convex} if $x,y \in S$
implies $[x,y] \subseteq S$. We say that $S$ is a \emph{cut set} if it
intersects every maximal chain in $P$. If
you have a convex cut set $C$, it makes sense to say that an element
$x \in P \setminus C$ is \emph{above $C$} or \emph{below $C$}: $x$
lies on a maximal chain, the maximal chain intersects $C$ in some
element $x'$, and $x$ is above $C$ if $x > x'$ and below $C$ if $x <
x'$. This is well defined, as $x' < x < x''$ for $x',x'' \in C$ would
imply $x \in C$. For example, if $\rank P \geq 2$, then $C=P \setminus
\{\hat 0_P, \hat 1_P\}$ is a convex cut set, $\hat 0_P$ is below $C$,
and $\hat 1_P$ is above $C$.

Recall from \Cref{sec:def} that $d_1(P)$ counts the number of nonempty
antichains in $P$ other than $\{\hat 0\}$. Such antichains are in
bijection with antichains in $P \setminus \{\hat 0_P, \hat 1_P\}$.
For any $S \subseteq P$, denote by $\alpha_P(S)$ the number of
antichains in $S$.  Then, we have $\alpha_P(P \setminus \{\hat 0_P,
\hat 1_P\}) = d_1(P)$.  Given two infinite sequences $(a_{n})$ and
$(b_{n})$, we write $a_{n} \sim b_{n}$ to mean $a_{n}/b_{n}
\rightarrow 1$ as $n$ goes to infinity.

\begin{theorem} \label{thm:antichain}
 Take a ranked poset $P$ with least and greatest elements, $\rank P \geq 2$, and $j \geq 1$. We have
 \begin{equation} \label{eq:antichain}
 |\quilt(P,A_2(j))| = \sum_C \alpha_P(C)^j,
 \end{equation}
 where the sum is over all subsets $C$ of $P \setminus \{\hat 0_P,\hat
 1_P\}$ that are convex cut sets of $P$. In particular, as $j$ goes to
 infinity, we have
\begin{equation} \label{eq:antichain2}
|\quilt(P,A_2(j))| \sim  d_1(P)^j.
\end{equation}
\end{theorem}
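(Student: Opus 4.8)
The plan is to decompose a quilt of type $(P,A_2(j))$ into its ``layers'' indexed by $A_2(j)$ and understand how the layers constrain one another. Write the elements of $A_2(j)$ as $\hat 0, z_1,\dots,z_j,\hat 1$, and for $f \in \quilt(P,A_2(j))$ set $h = f(\cdot,\hat 1)$ and $g_i = f(\cdot,z_i)$; the layer over $\hat 0$ is identically $0$ and carries no information. Reading \Cref{def:quilt} on the cover relations of $P \times A_2(j)$, I would record that $h$ is a rank-$2$ Dedekind map in $D_2(P)$ (Boolean growth in $P$ with $h(\hat 0_P)=0$ and $h(\hat 1_P)=2$ by \Cref{lem:chains.bounds}, which forces the value $1$ to be attained along every maximal chain), that each $g_i$ is a monotone $\{0,1\}$-valued function vanishing at $\hat 0_P$ (i.e.\ the indicator of an up-set of $P$, since $\hat 0 \lessdot z_i$ gives $g_i(x)\in\{0,1\}$), and that the cover $z_i \lessdot \hat 1$ forces the coupling $h(x) \in \{g_i(x),g_i(x)+1\}$ for all $x$. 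Solving this pointwise shows $g_i(x)$ is forced to $0$ where $h(x)=0$, forced to $1$ where $h(x)=2$, and free where $h(x)=1$. Thus $f \mapsto (h;g_1,\dots,g_j)$ is a bijection onto such tuples, with the $g_i$ chosen independently once $h$ is fixed.

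Next I would identify $h \in D_2(P)$ with the level set $C = h^{-1}(1)$. Monotonicity of $h$ makes $C$ convex (a value $1$ is sandwiched between $0$ and $2$), and since $h$ runs from $0$ to $2$ along every maximal chain it must take the value $1$ there, so $C$ is a cut set inside $P \setminus \{\hat 0_P,\hat 1_P\}$. Conversely, given a convex cut set $C$, define $h$ to be $0$ below $C$, $1$ on $C$, and $2$ above $C$ (using that ``above/below $C$'' is well defined, as established before the theorem). The delicate point---and the step I expect to be the main obstacle---is verifying that this $h$ obeys Boolean growth, i.e.\ that no cover $x \lessdot x'$ jumps from below $C$ to above $C$: if it did, extending the edge to a maximal chain and using that this chain meets $C$ while nothing lies strictly between $x$ and $x'$ would force a $C$-element either below $x$ or above $x'$, contradicting that $x$ is below $C$ or that $x'$ is above $C$. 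This uses both convexity and the cut-set property essentially. The two directions together give a bijection $D_2(P) \longleftrightarrow \{\text{convex cut sets } C \subseteq P \setminus \{\hat 0_P,\hat 1_P\}\}$.

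With $C$ fixed, I would count the admissible $g_i$. The forcing makes $g_i$ equal $0$ below $C$ and $1$ above $C$, while on $C$ it is arbitrary; a short case check over the regions below/on/above $C$ shows that global monotonicity of $g_i$ is equivalent to monotonicity of its restriction to $C$. Hence the admissible $g_i$ are exactly the up-sets of the induced subposet $C$, and the number of these equals the number of antichains of $C$ (via minimal elements), namely $\alpha_P(C)$. Choosing the $j$ functions independently gives $\alpha_P(C)^j$ quilts with middle layer $C$, and summing over the bijection of the previous paragraph yields
\[
|\quilt(P,A_2(j))| = \sum_{C} \alpha_P(C)^j,
\]
which is \eqref{eq:antichain}.

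Finally, for the asymptotics I would use that $\alpha_P$ is strictly monotone under inclusion: if $C \subsetneq C'$ then every antichain of $C$ is one of $C'$, and any singleton from $C' \setminus C$ is a new antichain, so $\alpha_P(C) < \alpha_P(C')$. Every competing $C$ is a subset of $P \setminus \{\hat 0_P,\hat 1_P\}$, which is itself a convex cut set because $\rank P \geq 2$, so the maximum of $\alpha_P(C)$ is attained uniquely at $C = P \setminus \{\hat 0_P,\hat 1_P\}$, where its value is $d_1(P)$ by \Cref{rem:sharp.P}. Since the number of convex cut sets is a constant independent of $j$, every other term is at most $(d_1(P)-1)^j$, and $d_1(P) \geq 2$, the term $d_1(P)^j$ dominates, giving $|\quilt(P,A_2(j))| \sim d_1(P)^j$, which is \eqref{eq:antichain2}.
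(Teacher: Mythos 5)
Your proposal is correct and follows essentially the same route as the paper's proof: identify the layer $f(\cdot,\hat 1)$ with a convex cut set $C \subseteq P \setminus \{\hat 0_P,\hat 1_P\}$, observe that each middle layer is then an independently chosen order filter (up-set) of $C$, counted by antichains, and let the term $C = P \setminus \{\hat 0_P,\hat 1_P\}$ dominate the sum. The only difference is that you spell out the converse verification (that a cover cannot jump from below $C$ to above $C$) which the paper leaves implicit; this is a welcome addition but not a different argument.
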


\begin{proof}
 For $f \in \quilt(P,A_2(j))$, we have $f(x,y) \leq \min\{\rank
P,\ \rank A_2(j)\} = 2$ for all $x \in P$ and $y \in A_2(j)$. Take $x
\in P$. If $f(x,\hat 1) = 0$, then $f(x,y) = 0$ for all $y \in
A_2(j)$. If $f(x,\hat 1) = 2$, then $f(x,\hat 0) = 0$ and $f(x,y) = 1$
for $\rank y = 1$. If, however, $f(x, \hat 1) = 1$, then $f(x,y)$ can
be either $0$ or $1$ for $\rank y = 1$.  If $x \leq x''$ and $f(x,
\hat 1) = f(x'', \hat 1) = 1$, then $f(x', \hat 1) = 1$ for every $x'
\in [x,x'']$ since $f \in \quilt(P,A_2(j))$.  Furthermore, for a
maximal chain $\hat 0_P = x_0 \lessdot x_1 \lessdot \dots \lessdot x_k
= \hat 1_P$, we have $f(x_0,\hat 1) = 0$, $f(x_k,\hat 1) = 2$, and
$f(x_i,\hat 1) - f(x_{i-1},\hat 1) \in \{0,1\}$, so $f(x_i,\hat 1)$
must be $1$ for some $i$.  Therefore, the set $C_{f}=\{ x \in P \colon
f(x, \hat 1) = 1\}$ is a convex cut set contained in $P \setminus
\{\hat 0_P, \hat 1_P\}$, and for every rank $1$ element $y$ in
$A_2(j)$, the set $F_y = \{x \in C_{f} \colon f(x,y) = 1\}$ is an
order filter in $C_{f}$: if $x \in F_y$ and $x' \in C_f$, $x' \geq x$,
then $x' \in F_y$.

It remains to enumerate quilts $f$ in $\quilt(P,A_2(j))$ for which $\{
x \in P \colon f(x, \hat 1) = 1\}$ is equal to a given convex cut
subset $C$ of $P \setminus \{\hat 0_P, \hat 1_P\}$.  Given any choice
of an order filter $F_y$ in $C$ for each $y$ independently, one can
define a corresponding quilt $f \in \quilt(P,A_2(j))$ as follows. The
value $f(x,\hat 1)$ is $1$ if $x \in C$, $0$ if $x$ is below $C$, and
$2$ if $x$ is above $C$.  If $f(x,\hat 1)$ is $0$
or $2$, the other $f(x,y)$ are uniquely determined, and if $f(x,\hat
1) = 1$, the other $f(x,y)$ are determined by $F_y$. Since the order
filters of $C$ are in a natural bijection with the antichains in $C$,
this completes the proof of Equation~\eqref{eq:antichain}.

It is clear that if $C$ is a proper subset of $P \setminus \{\hat
0_P,\hat 1_P\}$, it contains strictly fewer antichains than $P
\setminus \{\hat 0_P,\hat 1_P\}$, so the term $d_1(P)^j$ coming from
$C = P \setminus \{\hat 0_P,\hat 1_P\}$ dominates.  This proves
\eqref{eq:antichain2}.
\end{proof}

\begin{example}\label{example:cut.set}
As a simple example, take $P = A_2(i)$ for $i \geq 1$. There is only
one (convex) cut set in $P \setminus \{\hat 0_P,\hat 1_P\}$, namely $P \setminus \{\hat 0_P,\hat 1_P\}$ itself. Every subset is an antichain, so $\alpha_P(P \setminus \{\hat 0_P,\hat 1_P\}) = 2^i$. Therefore $|\quilt(A_2(i),A_2(j))| = 2^{ij}$. This is consistent with Corollary~\ref{cor:powerofasm} for $n = 2$. In fact, it is easy to see that $\quilt(A_2(i),A_2(j)) \cong B_{ij}$ as lattices.
\end{example}

As a more involved application, let us show some enumerative results about quilts when $P$ is a chain or a product of chains, and $Q$ is an antichain. The $\ber_n$'s that appear in the corollary are the Bernoulli numbers $1,\frac{1}{2},\frac{1}{6},0,-\frac{1}{30},0,\frac{1}{42},\ldots$

\begin{cor} \label{prop:chainbyantichain}
 For arbitrary integers $j \geq 1$ and $k \geq 2$, we
have

\begin{equation}
 |\quilt(C_k,A_2(j))| = \sum_{i=2}^k (k+1-i)i^j.
 \end{equation}
Therefore,  $|\quilt(C_k,A_2(j))|$ as a function of $k$ is given by
the polynomial
\begin{equation}\label{eq:bernoulli}
 \frac{1}{(j+1)(j+2)} \left(k^{j+2} + \sum_{l = 1}^j \binom{j+2}l \left(l\ \ber_{l-1} - (l-1) \ber_l \right) k^{j+2-l} \right) + (\ber_j - \ber_{j+1} - 1) k.
\end{equation}
 For arbitrary $k$ we have
 $$|\quilt(C_{k} \times C_1,A_2(j))| = \sum_{\substack{0 \leq d \leq
 b,\, c \leq a \leq k \\ 1 \leq b \leq c+1 \leq k}} \left((a-c)(c-d+2)+{\textstyle \binom{c-d+3}2}-{\textstyle \binom{b-d+1}2}\right)^j.$$
 For arbitrary $k_1,k_2$, we have
 $$|\quilt(C_{k_1} \times C_{k_2},A_2(j))| \sim \left(\binom{k_1+k_2+2}{k_1+1}-2\right)^j.$$
\end{cor}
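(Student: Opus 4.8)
The plan is to specialize Theorem~\ref{thm:antichain} to each of $P=C_k$, $P=C_k\times C_1$, and $P=C_{k_1}\times C_{k_2}$, reducing every part to two tasks: classifying the convex cut sets $C\subseteq P\setminus\{\hat 0_P,\hat 1_P\}$ and counting the antichains $\alpha_P(C)$ in each. For $P=C_k$ the only maximal chain is all of $C_k$, so a subset is a cut set exactly when it is nonempty, and a convex subset of a chain is an interval; hence the convex cut sets are precisely the intervals $[a',b']\subseteq\{1,\dots,k-1\}$. A subset of a chain is totally ordered, so its only antichains are $\emptyset$ and its singletons, giving $\alpha_{C_k}(C)=m+1$ for an interval $C$ of size $m$. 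Since $\{1,\dots,k-1\}$ has $k-m$ intervals of size $m$, writing $i=m+1\in\{2,\dots,k\}$ there are $k+1-i$ intervals with $\alpha_{C_k}(C)=i$, and Equation~\eqref{eq:antichain} gives $|\quilt(C_k,A_2(j))|=\sum_{i=2}^k(k+1-i)i^j$.

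To turn this sum into a polynomial in $k$, I would write $\sum_{i=2}^k(k+1-i)i^j=(k+1)\sum_{i=1}^k i^j-\sum_{i=1}^k i^{j+1}-k$ and substitute Faulhaber's formula $\sum_{i=1}^k i^p=\frac1{p+1}\sum_{l=0}^p\binom{p+1}{l}\ber_l\,k^{p+1-l}$ in the convention $\ber_1=\tfrac12$ used in the statement. The coefficient of $k^{j+2}$ is $\frac1{j+1}-\frac1{j+2}=\frac1{(j+1)(j+2)}$; the coefficient of $k$ collects $\ber_j$ from $\sum_{i=1}^k i^j$, $-\ber_{j+1}$ from $\sum_{i=1}^k i^{j+1}$ and $-1$ from the stray $-k$, giving $\ber_j-\ber_{j+1}-1$; and for $1\le l\le j$ the coefficient of $k^{j+2-l}$, after clearing the common factor $\frac{j!}{l!\,(j+2-l)!}$, reduces to the elementary identity $(j+2-l)\ber_l+l\,\ber_{l-1}-(j+1)\ber_l=l\,\ber_{l-1}-(l-1)\ber_l$. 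Matching these against Equation~\eqref{eq:bernoulli} is then routine.

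The heart of the corollary is $P=C_k\times C_1$, whose elements I write as $(i,\varepsilon)$ with $0\le i\le k$ and $\varepsilon\in\{0,1\}$; a maximal chain is determined by the height $m$ at which it steps from the bottom row to the top row. Since each row is a chain, a convex cut set $C$ meets the bottom row in an interval $[b,a]$ and the top row in an interval $[d,c]$, both nonempty (otherwise the chain stepping at $m=k$, respectively $m=0$, avoids $C$). I would then check that convexity forces the top interval weakly to the left of the bottom one, i.e.\ $d\le b$ and $c\le a$; that the cut condition forces $b\le c+1$, splitting into an overlapping case $b\le c$ and a touching case $b=c+1$; and that avoiding $\hat 0_P,\hat 1_P$ gives $b\ge 1$ and $c\le k-1$. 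Conversely each such configuration is a convex cut set, giving the bijection with the admissible $(a,b,c,d)$ indexing the sum. Because each row is a chain, an antichain of $C$ uses at most one bottom and one top element, so $\alpha_P(C)=1+(a-b+1)+(c-d+1)+N$, where $N=\#\{(i,i')\colon b\le i\le a,\ d\le i'\le c,\ i>i'\}$ counts incomparable mixed pairs $\{(i,0),(i',1)\}$. Evaluating $N$ as a double sum (splitting the inner count at $i'=b-1$) and simplifying collapses $\alpha_P(C)$ to $(a-c)(c-d+2)+\binom{c-d+3}2-\binom{b-d+1}2$, and Equation~\eqref{eq:antichain} then yields the stated formula. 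This classification---especially verifying that the admissible tuples are exactly those in the sum and that both rows stay nonempty---together with closing the double sum for $N$, is the step I expect to be most delicate.

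Finally, Equation~\eqref{eq:antichain2} gives $|\quilt(C_{k_1}\times C_{k_2},A_2(j))|\sim d_1(C_{k_1}\times C_{k_2})^j$ directly, so it remains to compute $d_1(C_{k_1}\times C_{k_2})$, the number of antichains in $(C_{k_1}\times C_{k_2})\setminus\{\hat 0,\hat 1\}$. Antichains of a finite poset biject with its order ideals (sending an antichain to the ideal it generates and an ideal to its set of maximal elements), and the order ideals of a product of two chains with $k_1+1$ and $k_2+1$ elements biject with monotone lattice paths across a grid, of which there are $\binom{k_1+k_2+2}{k_1+1}$. Since the only antichains of $C_{k_1}\times C_{k_2}$ meeting $\{\hat 0,\hat 1\}$ are the singletons $\{\hat 0\}$ and $\{\hat 1\}$, we get $d_1(C_{k_1}\times C_{k_2})=\binom{k_1+k_2+2}{k_1+1}-2$, and raising to the $j$-th power completes the asymptotic claim.
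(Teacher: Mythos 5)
Your proposal is correct and follows essentially the same route as the paper: specialize Theorem~\ref{thm:antichain}, classify the convex cut sets (intervals in $C_k$; a pair of row-intervals $[b,a]$, $[d,c]$ with $d\le b$, $c\le a$, $b\le c+1$ in $C_k\times C_1$; and for the asymptotic part, count antichains of $C_{k_1}\times C_{k_2}$ via lattice paths and subtract $2$), then count antichains in each cut set. The only difference is that you carry out details the paper leaves implicit --- the Bernoulli-coefficient bookkeeping behind Faulhaber's formula, the explicit nonemptiness requirements $b\le a$, $d\le c$ on the two row-intervals, and the closed form for the mixed-pair count $N$ --- all of which check out.
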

\begin{proof}
 A non-empty convex set in $C_k \setminus \{0,k\}$ is an interval $[i,j]$ for $1 \leq i \leq j \leq k-1$, and every such interval is a cut set. Furthermore, $[i,j]$ contains $|[i,j]]+1=j-i+2$ antichains---the empty set and all singletons. There are $k+1-i$ intervals of size $i-1$, $i \geq 2$, so \Cref{thm:antichain} gives $|\quilt(C_k,A_2(j))| = \sum_{i=2}^k (k+1-i)i^j = (k+1) \sum_{i=2}^k i^j - \sum_{i=2}^k i^{j+1}$. The famous Faulhaber's formula gives~\eqref{eq:bernoulli}.

 A subset $C$ of $C_{k} \times C_1 \setminus \{(0,0),(k,1)\}$ is a convex
 cut set if and only if it there exist $a,b,c,d$ so that $C = \{(i,0), b \leq i \leq a\} \cup \{(i,1), d \leq i \leq c\}$, where
 \begin{itemize}
  \item $b \geq 1$, otherwise $C$ contains $(0,0)$;
  \item $c \leq k-1$, otherwise $C$ contains $(k,1)$;
  \item $d \leq b$, otherwise the point $(b,1)$ is between $(b,0)$ and $(d,1)$ but not in $C$;
  \item $c \leq a$ for a similar reason; and
  \item $b \leq c+1$, otherwise there is a maximal chain that avoids $C$.
 \end{itemize}
 It is not hard to see that the number of antichains in such a set is $(a-c)(c-d+2)+{\textstyle \binom{c-d+3}2}-{\textstyle \binom{b-d+1}2}$. Now use \Cref{thm:antichain}.

 We can interpret an antichain in $C_{k_1} \times C_{k_2}$ as (the southwest corners of) a lattice path between $(k_1,0)$ and $(0,k_2)$, and there are $\binom{k_1+k_2+2}{k_1+1}$ of those. We subtract $2$ because we do not count the antichains $\{(0,0)\}$ and $\{(k_1,k_2)\}$. Again, use \Cref{thm:antichain}.
\end{proof}

\begin{example} The following illustrates the various statements of \Cref{prop:chainbyantichain}.
 We have
 $$|\quilt(C_4,A_2(j))| = 3 \cdot 2^j + 2 \cdot 3^j + 4^j$$
 for $j \geq 1$ and
 $$|\quilt(C_k,A_2(3))| = \frac{k^5}{20}+\frac{k^4}{4}+\frac{5 k^3}{12}+\frac{k^2}{4}-\frac{29 k}{30}$$
 for $k \geq 2$. The next statement gives
 $$|\quilt(C_2 \times C_1,A_2(j))| = 3^j + 2 \cdot 4^j + 2 \cdot 5^j + 2 \cdot 6^j + 8^j,$$
 and
 $$|\quilt(C_3 \times C_1,A_2(j))| = 2 \cdot 3^j + 3 \cdot 4^j + 4 \cdot 5^j + 5 \cdot 6^j + 2 \cdot 7^j + 4 \cdot 8^j + 3 \cdot 9^j + 2 \cdot 11^j + 13^j.$$
\end{example}

\begin{example}
It follows from Theorem \ref{thm:antichain} that $|\quilt(B_n,A_2(j))|
\sim d_1(B_n)^j$. It does not seem likely that a simple exact formula
for $|\quilt(B_n,A_2(j))|$ exists. Since $B_2 = A_2(2)$,
we have $|\quilt(B_2,A_2(j))| = 4^j$, and some (computer) time is needed to find
 $$|\quilt(B_3,A_2(j))| = 2 \cdot 8^j + 3 \cdot 9^j + 6 \cdot 10^j + 6 \cdot 13^j + 18^j.$$
The exact formula for $|\quilt(B_4,A_2(j))|$ can be found in 
Appendix A.
\end{example}

\section{Enumeration of chain quilts} \label{sec:chain}

Given a finite poset $P$, we will consider the enumeration of chain
quilts $\quilt(P,C_{n})$ in this section. The formulas are in terms
of \emph{fundamental} and \emph{standard} quilts for $P$.  We begin
with some additional notation and vocabulary.

Recall that we defined the sum of ranks $b(P) = \sum_{x \in P} \rank
x$. If $f \in \quilt(P,C_{b(P)})$, we say that $i \in [b(P)]$ is a
\emph{jump for $f$} if there exists $x \in P$ so that $f(x,i) =
f(x,i-1) + 1$. If the set of jumps of $f$ is equal to $[m]$, we say
that $f$ is \emph{$m$-fundamental for $P$}. A \emph{standard} quilt is
one that is $b(P)$-fundamental. Denote by $F_m(P)$ the set of all
$m$-fundamental quilts for $P$, and write $S(P) = F_{b(P)}$ and $F(P) =
\bigcup_m F_m(P)$.

A chain quilt is $m$-fundamental if and only if its MT form contains
precisely the elements $1,\ldots,m$. In particular, it is standard if
and only if its MT form contains exactly one of each of
$1,\ldots,b(P)$.  For example, consider $P = B_2$.  We have $b(P) =
4$, and there are four $2$-fundamental, five $3$-fundamental, and two
$4$-fundamental (standard) quilts, presented in
Figure~\ref{fig:fundamentalB2} in MT form.

 \begin{figure}[!ht]
 \begin{center}
\begin{tikzpicture}
\node (X0) at (0,0) {$\emptyset$};
\draw (-0.1,0.25) -- (-0.3,0.75);\draw (0.1,0.25) -- (0.3,0.75);
\node (X11) at (-0.4,1) {$1$}; \node (X12) at (0.4,1) {$1$\:};
\draw (-0.3,1.25) -- (-0.1,1.75);\draw (0.3,1.25) -- (0.1,1.75);
\node (X2) at (0,2) {$12$};
\end{tikzpicture}
\begin{tikzpicture}
\node (X0) at (0,0) {$\emptyset$};
\draw (-0.1,0.25) -- (-0.3,0.75);\draw (0.1,0.25) -- (0.3,0.75);
\node (X11) at (-0.4,1) {$1$}; \node (X12) at (0.4,1) {$2$\:};
\draw (-0.3,1.25) -- (-0.1,1.75);\draw (0.3,1.25) -- (0.1,1.75);
\node (X2) at (0,2) {$12$};
\end{tikzpicture}
\begin{tikzpicture}
\node (X0) at (0,0) {$\emptyset$};
\draw (-0.1,0.25) -- (-0.3,0.75);\draw (0.1,0.25) -- (0.3,0.75);
\node (X11) at (-0.4,1) {$2$}; \node (X12) at (0.4,1) {$1$\:};
\draw (-0.3,1.25) -- (-0.1,1.75);\draw (0.3,1.25) -- (0.1,1.75);
\node (X2) at (0,2) {$12$};
\end{tikzpicture}
\begin{tikzpicture}
\node (X0) at (0,0) {$\emptyset$};
\draw (-0.1,0.25) -- (-0.3,0.75);\draw (0.1,0.25) -- (0.3,0.75);
\node (X11) at (-0.4,1) {$2$}; \node (X12) at (0.4,1) {$2$\:};
\draw (-0.3,1.25) -- (-0.1,1.75);\draw (0.3,1.25) -- (0.1,1.75);
\node (X2) at (0,2) {$12$};
\end{tikzpicture}
\begin{tikzpicture}
\node (X0) at (0,0) {$\emptyset$};
\draw (-0.1,0.25) -- (-0.3,0.75);\draw (0.1,0.25) -- (0.3,0.75);
\node (X11) at (-0.4,1) {$1$}; \node (X12) at (0.4,1) {$2$\:};
\draw (-0.3,1.25) -- (-0.1,1.75);\draw (0.3,1.25) -- (0.1,1.75);
\node (X2) at (0,2) {$13$};
\end{tikzpicture}
\begin{tikzpicture}
\node (X0) at (0,0) {$\emptyset$};
\draw (-0.1,0.25) -- (-0.3,0.75);\draw (0.1,0.25) -- (0.3,0.75);
\node (X11) at (-0.4,1) {$2$}; \node (X12) at (0.4,1) {$1$\:};
\draw (-0.3,1.25) -- (-0.1,1.75);\draw (0.3,1.25) -- (0.1,1.75);
\node (X2) at (0,2) {$13$};
\end{tikzpicture}
\begin{tikzpicture}
\node (X0) at (0,0) {$\emptyset$};
\draw (-0.1,0.25) -- (-0.3,0.75);\draw (0.1,0.25) -- (0.3,0.75);
\node (X11) at (-0.4,1) {$2$}; \node (X12) at (0.4,1) {$2$\:};
\draw (-0.3,1.25) -- (-0.1,1.75);\draw (0.3,1.25) -- (0.1,1.75);
\node (X2) at (0,2) {$13$};
\end{tikzpicture}
\begin{tikzpicture}
\node (X0) at (0,0) {$\emptyset$};
\draw (-0.1,0.25) -- (-0.3,0.75);\draw (0.1,0.25) -- (0.3,0.75);
\node (X11) at (-0.4,1) {$2$}; \node (X12) at (0.4,1) {$3$\:};
\draw (-0.3,1.25) -- (-0.1,1.75);\draw (0.3,1.25) -- (0.1,1.75);
\node (X2) at (0,2) {$13$};
\end{tikzpicture}
\begin{tikzpicture}
\node (X0) at (0,0) {$\emptyset$};
\draw (-0.1,0.25) -- (-0.3,0.75);\draw (0.1,0.25) -- (0.3,0.75);
\node (X11) at (-0.4,1) {$3$}; \node (X12) at (0.4,1) {$2$\:};
\draw (-0.3,1.25) -- (-0.1,1.75);\draw (0.3,1.25) -- (0.1,1.75);
\node (X2) at (0,2) {$13$};
\end{tikzpicture}
\begin{tikzpicture}
\node (X0) at (0,0) {$\emptyset$};
\draw (-0.1,0.25) -- (-0.3,0.75);\draw (0.1,0.25) -- (0.3,0.75);
\node (X11) at (-0.4,1) {$2$}; \node (X12) at (0.4,1) {$3$\:};
\draw (-0.3,1.25) -- (-0.1,1.75);\draw (0.3,1.25) -- (0.1,1.75);
\node (X2) at (0,2) {$14$};
\end{tikzpicture}
\begin{tikzpicture}
\node (X0) at (0,0) {$\emptyset$};
\draw (-0.1,0.25) -- (-0.3,0.75);\draw (0.1,0.25) -- (0.3,0.75);
\node (X11) at (-0.4,1) {$3$}; \node (X12) at (0.4,1) {$2$};
\draw (-0.3,1.25) -- (-0.1,1.75);\draw (0.3,1.25) -- (0.1,1.75);
\node (X2) at (0,2) {$14$};
\end{tikzpicture}
\end{center}
\caption{All fundamental quilts for $B_2$.} \label{fig:fundamentalB2}
\end{figure}
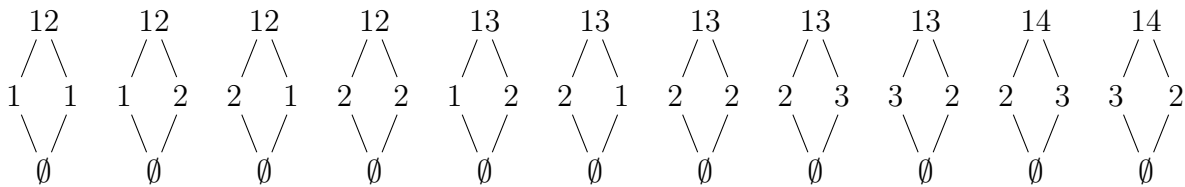

Observe that $\rank P \leq b(P)$ and a quilt can be $m$-fundamental
only for $ \rank P \leq m \leq b(P)$.  Note too that $S(P)$ is not
empty. Indeed, to find a standard chain quilt for $P$, consider the
map that sends each $x \in P$ of rank $i$ to the subset $\{1,2,\ldots,
i \}$, and then ``standardize'': change all the $1$'s in the reverse
of the chosen total order on $P$ to $1,2,\ldots,j_1$, then change all
of the original $2$'s to $j_1+1,j_1+2,\ldots,j_1+j_2$, then all
original $3$'s to $j_1+j_2+1,j_1+j_2+2,\ldots,j_1+j_2+j_3$
etc. Figure~\ref{fig:standardB3} shows the standard quilt we get for
$B_3$. There are $1344$ standard chain quilts for $B_{3}$ in total.

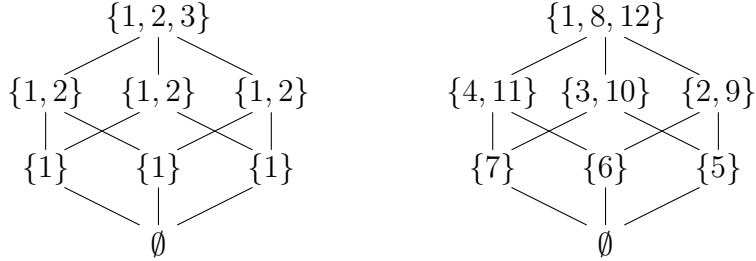
\begin{figure}[!ht]
\begin{center}
\begin{tikzpicture}
\node (X0) at (0,0) {$\emptyset$};
\draw (-0.25,0.25) -- (-1.25,0.75);\draw (0,0.25) -- (0,0.75);\draw (0.25,0.25) -- (1.25,0.75);
\node (X11) at (-1.5,1) {$\{1\}$}; \node (X12) at (0,1) {$\{1\}$}; \node (X13) at (1.5,1) {$\{1\}$};
\draw (-1.5,1.25) -- (-1.5,1.75);\draw (-1.25,1.25) -- (-0.25,1.75);\draw (-0.25,1.25) -- (-1.25,1.75);\draw (0.25,1.25) -- (1.25,1.75);\draw (1.25,1.25) -- (0.25,1.75);\draw (1.5,1.25) -- (1.5,1.75);
\node (X21) at (-1.5,2) {$\{1,2\}$}; \node (X12) at (0,2) {$\{1,2\}$}; \node (X13) at (1.5,2) {$\{1,2\}$};
\draw (-0.25,2.75) -- (-1.25,2.25);\draw (0,2.75) -- (0,2.25);\draw (0.25,2.75) -- (1.25,2.25);
\node (X3) at (0,3) {$\{1,2,3\}$};
\end{tikzpicture}
\hspace{.5in}
\begin{tikzpicture}
\node (X0) at (0,0) {$\emptyset$};
\draw (-0.25,0.25) -- (-1.25,0.75);\draw (0,0.25) -- (0,0.75);\draw (0.25,0.25) -- (1.25,0.75);
\node (X11) at (-1.5,1) {$\{7\}$}; \node (X12) at (0,1) {$\{6\}$}; \node (X13) at (1.5,1) {$\{5\}$};
\draw (-1.5,1.25) -- (-1.5,1.75);\draw (-1.25,1.25) -- (-0.25,1.75);\draw (-0.25,1.25) -- (-1.25,1.75);\draw (0.25,1.25) -- (1.25,1.75);\draw (1.25,1.25) -- (0.25,1.75);\draw (1.5,1.25) -- (1.5,1.75);
\node (X21) at (-1.5,2) {$\{4,11\}$}; \node (X12) at (0,2) {$\{3,10\}$}; \node (X13) at (1.5,2) {$\{2,9\}$};
\draw (-0.25,2.75) -- (-1.25,2.25);\draw (0,2.75) -- (0,2.25);\draw (0.25,2.75) -- (1.25,2.25);
\node (X3) at (0,3) {$\{1,8,12\}$};
\end{tikzpicture}
\caption{A 3-fundamental quilt for $B_{3}$ along with
its standardization.}  \label{fig:standardB3}
\end{center}
\end{figure}

\begin{theorem}  \label{thm:chainenumeration}
 For a fixed poset $P$ of rank $k$ with least and greatest elements
and any integer $n \geq k$, the number of chain quilts of type $(P,
C_{n})$ is given by a polynomial in $n$, namely

\begin{equation} \label{eq:chainenumeration}
|\quilt(P, C_n)| = \sum_{m = k}^{b(P)} |F_m(P)| \binom n
m.  \end{equation} In particular, \begin{equation}
\label{eq:chainasymptotics} |\quilt(P, C_n)| \sim \frac{|S(P)|}{b(P)!}
\cdot n^{b(P)}.  \end{equation}
\end{theorem}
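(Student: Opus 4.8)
The plan is to sort every chain quilt according to which values in $[n]$ actually occur as jumps, and to show that each possible jump-support of size $m$ contributes exactly $|F_m(P)|$ quilts once its $m$ positions in $[n]$ are chosen. I would work throughout in MT form, where by \Cref{prop:interlacing} a chain quilt is the same as a map $J \colon P \to B_n$ whose values on adjacent elements of $P$ interlace.

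First I would pin down the range of $m$. For $f \in \quilt(P,C_n)$ with $n \geq k = \rank P$, set $S_f = \bigcup_{x \in P} J_f(x) \subseteq [n]$ and $m = |S_f|$, the set of jump positions actually used. Since $n \geq k$, \Cref{prop:interlacing} gives $|J_f(x)| = \rank x$ for every $x$. In particular $J_f(\hat 1_P) \subseteq S_f$ has size $k$, forcing $m \geq k$, while
$$m = \Big| \bigcup_{x \in P} J_f(x) \Big| \leq \sum_{x \in P} |J_f(x)| = \sum_{x \in P} \rank x = b(P).$$
Hence $k \leq m \leq b(P)$, explaining the summation range and identifying $m = b(P)$ (no two elements share a jump position) as the standard case.

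Next I would establish the bijection
$$\quilt(P,C_n) \;\longleftrightarrow\; \bigsqcup_{m=k}^{b(P)} F_m(P) \times \binom{[n]}{m}.$$
Given $f$ with jump support $S_f$ of size $m$, let $\sigma \colon S_f \to [m]$ be the unique order isomorphism; since $J_f(x) \subseteq S_f$ for all $x$, the relabeled assignment $x \mapsto \sigma(J_f(x)) \in B_m \subseteq B_{b(P)}$ is the MT form of a chain quilt whose jump support is exactly $\sigma(S_f) = [m]$, i.e. an element of $F_m(P) \subseteq \quilt(P,C_{b(P)})$, and I pair it with $S_f \in \binom{[n]}{m}$. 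Conversely, given $g \in F_m(P)$ (so $J_g(x) \subseteq [m]$ for all $x$) and $S \in \binom{[n]}{m}$ with order isomorphism $\tau \colon [m] \to S$, the map $x \mapsto \tau(J_g(x))$ is a chain quilt in $\quilt(P,C_n)$ with jump support exactly $S$. The crucial point I would verify is that the interlacing conditions \eqref{eq:interlacing.1}--\eqref{eq:interlacing.2}, together with the boundary and growth conditions of \Cref{def:quilt}, are purely order-theoretic and so are preserved when a single monotone bijection is applied simultaneously to all jump sets; this makes both maps land in valid quilts, and they are visibly mutually inverse. Counting fibers then yields $|\quilt(P,C_n)| = \sum_{m=k}^{b(P)} |F_m(P)| \binom{n}{m}$, which is \eqref{eq:chainenumeration}. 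For the asymptotics, each $\binom{n}{m}$ is a polynomial of degree $m$ in $n$; since $S(P) = F_{b(P)}(P)$ is nonempty (a standard quilt was constructed just before the theorem), the $m = b(P)$ term survives, the degree is exactly $b(P)$, and the leading behavior is $|S(P)|\binom{n}{b(P)} \sim \frac{|S(P)|}{b(P)!}\,n^{b(P)}$, giving \eqref{eq:chainasymptotics}.

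The main obstacle I anticipate is the bookkeeping in the bijection: one must confirm that relabeling by an order isomorphism sends a genuine chain quilt to a genuine chain quilt, and---most importantly---that compression produces a quilt whose jump support is \emph{all} of $[m]$ rather than a proper subset, so that the image really lands in $F_m(P)$ and distinct fibers do not overlap. Once the order-theoretic invariance of interlacing is isolated, the remaining verifications are routine.
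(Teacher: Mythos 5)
Your proposal is correct and follows essentially the same route as the paper: pass to MT form, compress the jump support $\bigcup_x J_f(x)$ of size $m$ to $[m]$ to obtain an $m$-fundamental quilt, and recover $f$ from that quilt together with the choice of an $m$-subset of $[n]$, giving $\sum_m |F_m(P)|\binom{n}{m}$ and the stated asymptotics from the $m=b(P)$ term. Your extra checks (the bounds $k\le m\le b(P)$ and the order-theoretic invariance of the interlacing conditions) are points the paper states more briefly or leaves implicit, but the argument is the same.
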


\begin{proof}
We will abuse notation and consider a quilt in $ f \in \quilt(P,
C_n)$, with $n \geq k$, to be synonymous with its jump set map $f
\colon P \to B_n$ for which $|f(x)| = \rank x$ for all $x \in P$ and
for which $f(x)$ and $f(y)$ interlace when $x \lessdot y$, see
\Cref{prop:interlacing}. Say that $\bigcup_{x \in P} f(x) =
\{i_1,\dots,i_m\} \subseteq [n]$. Replace all instances of $i_j$ with
$j$ for all $j \in [m]$. This gives us an $m$-fundamental quilt, and for every
$m$-fundamental quilt, there are $\binom n m$ ways to choose the map
$j \mapsto i_j$. This proves Equation~\eqref{eq:chainenumeration}. The
highest degree terms are clearly the ones with $m = b(P)$, which
implies Equation~\eqref{eq:chainasymptotics}.
\end{proof}

To illustrate the procedure employed in the proof, take the chain
quilt on the left of Figure~\ref{fig:fundamentalB3}. The union of all
the jump sets is $\{2,4,6,7,9\}$, so we replace $2$, $4$, $6$, $7$, $9$ by $1$, $2$, $3$, $4$, $5$, respectively. We get the $5$-fundamental quilt on the right.

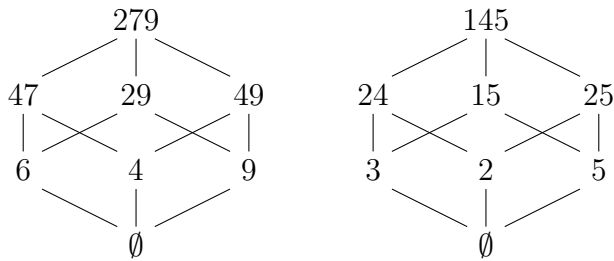
\begin{figure}[!ht]
\begin{center}
\begin{tikzpicture}
\node (X0) at (0,0) {$\emptyset$};
\draw (-0.25,0.25) -- (-1.25,0.75);\draw (0,0.25) -- (0,0.75);\draw (0.25,0.25) -- (1.25,0.75);
\node (X11) at (-1.5,1) {$6$}; \node (X12) at (0,1) {$4$}; \node (X13) at (1.5,1) {$9$};
\draw (-1.5,1.25) -- (-1.5,1.75);\draw (-1.25,1.25) -- (-0.25,1.75);\draw (-0.25,1.25) -- (-1.25,1.75);\draw (0.25,1.25) -- (1.25,1.75);\draw (1.25,1.25) -- (0.25,1.75);\draw (1.5,1.25) -- (1.5,1.75);
\node (X21) at (-1.5,2) {$47$}; \node (X12) at (0,2) {$29$}; \node (X13) at (1.5,2) {$49$};
\draw (-0.25,2.75) -- (-1.25,2.25);\draw (0,2.75) -- (0,2.25);\draw (0.25,2.75) -- (1.25,2.25);
\node (X3) at (0,3) {$279$};
\end{tikzpicture} \qquad
\begin{tikzpicture}
\node (X0) at (0,0) {$\emptyset$};
\draw (-0.25,0.25) -- (-1.25,0.75);\draw (0,0.25) -- (0,0.75);\draw (0.25,0.25) -- (1.25,0.75);
\node (X11) at (-1.5,1) {$3$}; \node (X12) at (0,1) {$2$}; \node (X13) at (1.5,1) {$5$};
\draw (-1.5,1.25) -- (-1.5,1.75);\draw (-1.25,1.25) -- (-0.25,1.75);\draw (-0.25,1.25) -- (-1.25,1.75);\draw (0.25,1.25) -- (1.25,1.75);\draw (1.25,1.25) -- (0.25,1.75);\draw (1.5,1.25) -- (1.5,1.75);
\node (X21) at (-1.5,2) {$24$}; \node (X12) at (0,2) {$15$}; \node (X13) at (1.5,2) {$25$};
\draw (-0.25,2.75) -- (-1.25,2.25);\draw (0,2.75) -- (0,2.25);\draw (0.25,2.75) -- (1.25,2.25);
\node (X3) at (0,3) {$145$};
\end{tikzpicture}
\end{center}
\caption{A chain quilt and a $5$-fundamental quilt for $B_3$.} \label{fig:fundamentalB3}
\end{figure}

\begin{example}
 From the previous example it follows that
 $$|\quilt(B_2,C_n)| = 4 \binom n 2 + 5 \binom n 3 + 2 \binom n 4 =
 \frac{n^4}{12}+\frac{n^3}{3}+\frac{5 n^2}{12}-\frac{5 n}{6}
\sim 2 \cdot \frac{n^4}{4!}
$$
 for $n \geq 2$. This agrees with \Cref{prop:chainbyantichain} (note that $B_2 = A_2(2)=C_1 \times C_1$). With more effort, we can compute
 $$|\quilt(B_3,C_n)| \sim \frac{1344}{12!} \cdot n^{12} \quad \text{and} \quad \quilt(B_4,C_n)| \sim \frac{10651644896477184}{32!} \cdot n^{32},$$
 see Appendix A.
\end{example}

\begin{example} \label{ex:rectangularASMs}
 We can think of a standard quilt for $P = C_k$ as a monotone triangle
 (in the classical sense) in which all numbers $1,\ldots,\binom{k+1}2$
 appear. After an up-down reflection and a $45^\circ$ rotation, we get
 a shifted standard Young tableau of staircase shape $(k,k-1,\ldots,1)$. For example, for $k=3$, we get monotone triangles
 $$\begin{array}{ccccc} 1&&3&&6 \\ &2&&5& \\ &&4&& \end{array} \quad \mbox{and} \quad \begin{array}{ccccc} 1&&4&&6 \\ &2&&5& \\ &&3&& \end{array}$$
 and shifted standard Young tableaux
 $$\begin{array}{ccc} 1 & 2 & 4 \\ & 3 & 5 \\ & & 6 \end{array} \quad \mbox{and} \quad \begin{array}{ccc} 1 & 2 & 3 \\ & 4 & 5 \\ & & 6 \end{array}.$$
 The hook-length formula for shifted standard Young tableaux \cite{Thr} gives (for fixed $k$ and $n \to \infty$)
$$|\ASM_{k,n}| \sim \frac{\prod_{i=0}^{k-1} (2i)!}{\prod_{i=0}^{k-1} (k + i)!} \cdot n^{\binom{k+1}2}.$$
 An $m$-fundamental quilt in this case can be interpreted as a shifted tableau of staircase shape $(k,k-1,\ldots,1)$ with weakly increasing rows and columns, strictly increasing diagonals, and entries in $[m]$, with each number in $[m]$ appearing at least once.
\end{example}

\begin{remark}
 Every $m$-fundamental quilt can be obtained from a standard quilt by replacing $1,\ldots,b(P)$ by a sequence of the type $1,\ldots,1,2,\ldots,2,\ldots,m,\ldots,m$, with each number appearing at least once. The only requirement when choosing the replacing sequence is that if $i,j$ appear in the same set of the standard quilt, $i$ and $j$ cannot be replaced by the same number. However, one $m$-fundamental quilt can be obtained from several standard ones. For example, for $P = B_2$, the replacing sequence $1223$ on both fundamental quilts (the last two in Figure~\ref{fig:fundamentalB2}) gives the same $3$-fundamental quilt (the seventh quilt in Figure~\ref{fig:fundamentalB2}). That means that we can rephrase Equation~\eqref{eq:chainenumeration} as
 $$|\quilt(P, C_n)| = \sum_{f \in S(T)} \sum_{u} \binom n {\max u},$$
 where $u$ runs over integer sequences that are ``compatible'' with $f$, where the definition of compatibility ensures that there are no repetitions of fundamental quilts. We omit the details.
\end{remark}

There is in fact one more way to compute $|\quilt(P,C_n)|$ and prove
the polynomiality property via the  transfer-matrix method
\cite[Thm. 4.7.2]{ec1}  using the adjacency matrix $A_D(P)$ of the
Dedekind graph of $P$ defined in \Cref{def:Dedekind.graphs}. While it
does not imply Equation~\eqref{eq:chainasymptotics}, it is the
authors' experience that it is in practice easier to compute the
inverse of the (upper-triangular) matrix $I - x A_D(P)$ than the
cardinalities of $F_m(P)$ for $m = k,\ldots,b(P)$. Furthermore, this
method also gives us a way to compute $|\quilt(P, C_1)|,\ldots,
|\quilt(P, C_{k-1})|$.

\begin{theorem} \label{thm:graph}
 For a finite poset $P$ of rank $k\geq 1$ with least and greatest elements,  we have
\begin{equation}\label{eq:transfer.1}
\sum_{n=k}^\infty |\quilt(P, C_n)|  x^n = (I - x
 A_D(P))^{-1}_{1,d(P)} = \frac{(-1)^{d(P)-1}}{(1-x)^{d(P)}} \det T(P),
\end{equation}
 where $T(P)$ is the transfer-matrix $I - x A_D(P)$ with the first column
 and last row removed. In particular, the sequence
 $0,0,\ldots,|\quilt(P, C_k)|, |\quilt(P, C_{k+1})|,\ldots$ is given
 by a polynomial of degree $< d(P)$. 
 Furthermore,
\begin{equation}\label{eq:transfer.2}
\sum_{n=0}^{k-1} |\quilt(P, C_n)| x^n = \sum_{i=1}^{d(P)-1} (I - x A'_D(P))^{-1}_{1,i} =  \sum_{i=1}^{d(P)-1} (-1)^{i-1} \det T'(P)_i,
\end{equation}
  where $T'(P)_i$ is the matrix $I - x A'_D(P)$ with the first column and $i$-th row removed.
\end{theorem}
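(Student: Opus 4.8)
The plan is to realize $\quilt(P,C_n)$ as a set of walks in the Dedekind graph $G_D(P)$ and then invoke the transfer-matrix method \cite[Thm. 4.7.2]{ec1}, generalizing \Cref{prop:Dedekind.walks} from chains to an arbitrary $P$. First I would set up the correspondence: given $f \in \quilt(P,C_n)$, each ``column'' $f_j \colon x \mapsto f(x,j)$, for $j = 0,\ldots,n$, is a Dedekind map on $P$ (the conditions $f_j(\hat 0_P) = 0$, Boolean growth, and surjectivity onto $[0,f_j(\hat 1_P)]$ all follow from \Cref{def:quilt} by restricting to a maximal chain of $P$), and the cover $(x,j)\lessdot(x,j+1)$ in $P \times C_n$ says precisely that there is an edge $f_j \to f_{j+1}$ in $G_D(P)$. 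Thus $f \mapsto (f_0,\ldots,f_n)$ is a bijection between $\quilt(P,C_n)$ and walks of length $n$ in $G_D(P)$ that start at the zero map (the unique source, of rank $0$, indexed $1$) and end at a vertex $f_n$ with $f_n(\hat 1_P) = \min\{k,n\}$.

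The key structural observation is that every edge of $G_D(P)$ changes the rank statistic $f \mapsto f(\hat 1_P)$ by $0$ (including loops) or by $1$, while every edge of $G'_D(P)$ changes it by exactly $1$. For $n \geq k$ the endpoint must satisfy $f_n(\hat 1_P) = k = \rank P$; since the rank-$k$ Dedekind map is unique (one shows $f(x) = \rank x$ is forced), it is the unique sink, indexed last, so $|\quilt(P,C_n)|$ is the number of length-$n$ walks from index $1$ to index $d(P)$, namely $(A_D(P)^n)_{1,d(P)}$. Summing the geometric series gives $(I - xA_D(P))^{-1}_{1,d(P)}$, and because reaching rank $k$ needs at least $k$ rank-raising edges, the terms with $n < k$ vanish, yielding the left equality of \eqref{eq:transfer.1}. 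For $n < k$ the endpoint has rank $n$, so all $n$ edges must be rank-raising, i.e.\ edges of $G'_D(P)$; hence $|\quilt(P,C_n)|$ counts length-$n$ walks from the source to any non-sink vertex, giving $\sum_{i=1}^{d(P)-1}(A'_D(P)^n)_{1,i}$. Since a length-$n$ walk from the source in $G'_D(P)$ always lands at rank $n$, and the only rank-$k$ vertex is the sink (index $d(P)$), these counts vanish for $n \geq k$, so summing over $n$ produces the left equality of \eqref{eq:transfer.2}.

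The determinant formulas then follow from Cramer's rule. Writing $M = I - xA_D(P)$, which is upper-triangular with diagonal entries $1-x$ (as $A_D(P)$ has $1$'s on the diagonal), we get $\det M = (1-x)^{d(P)}$, and the cofactor expression $(M^{-1})_{1,d(P)} = (-1)^{d(P)+1}\det(M \text{ with row } d(P) \text{ and column } 1 \text{ deleted})/\det M$ is exactly $\frac{(-1)^{d(P)-1}}{(1-x)^{d(P)}}\det T(P)$. Similarly $M' = I - xA'_D(P)$ is upper-triangular with $1$'s on the diagonal, so $\det M' = 1$, whence $(M'^{-1})_{1,i} = (-1)^{i-1}\det T'(P)_i$, and summing over $i$ gives the right equality of \eqref{eq:transfer.2}. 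For polynomiality I would note that $\det T(P)$ is the determinant of a $(d(P)-1)\times(d(P)-1)$ matrix with linear entries, hence a polynomial in $x$ of degree $\leq d(P)-1$; writing $\frac{x^j}{(1-x)^{d(P)}} = \sum_n \binom{n-j+d(P)-1}{d(P)-1}x^n$ and checking that this binomial, viewed as a polynomial in $n$, vanishes at $n = 0,\ldots,j-1$ whenever $j \leq d(P)-1$ shows that the coefficient sequence of \eqref{eq:transfer.1} agrees with a single polynomial in $n$ of degree $<d(P)$ for all $n \geq 0$, including the forced leading zeros.

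The main obstacle I anticipate is the careful two-regime bookkeeping: tracking, via the rank statistic $f \mapsto f(\hat 1_P)$, why loops supply exactly the ``extra'' steps when $n \geq k$ yet must be entirely absent when $n < k$, and pinning down the source and sink as indices $1$ and $d(P)$ (using the uniqueness of the top Dedekind map) so that the cofactor signs and the deleted rows and columns line up exactly with the definitions of $T(P)$ and $T'(P)_i$.
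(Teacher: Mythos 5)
Your proposal is correct and follows essentially the same route as the paper: identify a quilt in $\quilt(P,C_n)$ with a walk $f_0,\ldots,f_n$ in the Dedekind graph from the zero map to the unique element of $D_k(P)$ (or, when $n<k$, with a walk in the restricted Dedekind graph ending at a non-sink), then apply the transfer-matrix method and read off the cofactor/determinant expressions. Your additional details — the uniqueness of the rank-$k$ Dedekind map, the explicit Cramer's-rule signs, and the binomial expansion establishing polynomiality including the forced leading zeros — are all consistent with, and slightly more explicit than, the paper's argument.
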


\begin{proof}
A chain quilt $f \in \quilt(P,C_n)$ can be viewed as a sequence of
Dedekind maps, with the $i$-th one, $0 \leq i \leq n$, sending $x$ to
$f(x,i)$. The map $x \mapsto f(x,0)$ is always the zero map, which is
the only element in $D_0(P)$. If $n \geq \rank P = k$, $f(x,n)=\rank
x$ is the only element in $D_k(P)$. Furthermore, there is an edge from
$x \mapsto f(x,i-1)$ to $x \mapsto f(x,i)$ in the Dedekind graph
$G_D(P)$. In other words, we can interpret a chain quilt as a walk on
the graph $G_D(P)$ starting in the first vertex (the all zero map) and
ending in the last vertex (the only element in $D_k(P)$). The
transfer-matrix method \cite[Thm. 4.7.2]{ec1} tells us that the
generating function for such walks is the $(1,d(P))$ entry of the
matrix $(I - x A_D(P))^{-1}$, or, equivalently, is given by the
corresponding determinantal expression, which proves
\eqref{eq:transfer.1}. By definition, $\det T(P)$ is a polynomial of
degree $< d(P)$, which implies $(-1)^{d(P)-1} \det T(P) (1-x)^{-d(P)}$
is a rational function of $x$ of degree $< 0$.  Hence, its coefficients
as a power series, namely $|\quilt(P, C_n)|$, are given by a
polynomial function of $n$.

If $n < \rank P$, we have $f(\hat 1_P,i) = i$ for every $i
\in [n]$. In other words, given $f \in \quilt(P,C_n)$ and $i\in [n]$
there is an edge from $x \mapsto f(x,i-1)$ to $x \mapsto f(x,i)$ in
the restricted Dedekind graph $G_D'(P)$, and we are looking at walks
on the graph $D'(P)$ starting in the first vertex and ending anywhere
except in the last vertex. This proves \eqref{eq:transfer.2}.
\end{proof}

Recall from \Cref{sub:asm.fubini} that there is an easy bijection
between $k \times n$ ASMs and monotone triangles with all possible
length $k$ top row sequences.  Such a top row sequence will be denoted
by $(a_1,\ldots,a_k)$ with $1 \leq a_1 < a_2 < \dots < a_k \leq
n$. Fischer proved that the cardinality of $\MT(a_1,\ldots,a_k)$, the
set of monotone triangles with top row $(a_1,\ldots,a_k)$ is a
polynomial in variables $a_1,\ldots,a_k$, and she also found an
explicit (operator) formula for $|\MT(a_1,\ldots,a_k)|$, see
\cite{MTs}. The definition can be extended to arbitrary chain quilts:
given a poset $P$ of rank $k$ and $1 \leq a_1 < a_2 < \dots < a_k \leq
n$, define $\MT_P(a_1,\ldots,a_k)$ as the set of quilts $f \in
\quilt(P,C_n)$ for which $J_f(\hat 1_P) = \{a_1,\ldots,a_k\}$.  Here, we
equate the quilt $f$ with its jump set map $f:P \longrightarrow B_{n}$
as in the proof of \Cref{thm:chainenumeration}.  We call $J_f(\hat 1_P)$
the \emph{top set of the quilt $f$}. Note that strictly speaking,
$\MT(a_1,\ldots,a_k)$ depends on $n$, but there is a natural bijection
between $\MT(a_1,\ldots,a_k) \subseteq \quilt(P,C_n)$ and
$\MT(a_1,\ldots,a_k) \subseteq \quilt(P,C_m)$ whenever $m,n \geq a_k$
so we can ignore this.   Let $J_f(\hat 1_P)_i$ denote the
$i$-th largest element of the set $J_f(\hat 1_P)$.

\begin{theorem} \label{thm:mt}
 For a finite poset $P$ of rank $k$ with least and greatest elements,
we have \begin{equation} \label{eq:mt} |\MT_P(a_1,\ldots,a_k)| =
\sum_{f \in F(P)} \prod_{i=2}^k \binom{a_i-a_{i-1}-1}{J_f(\hat 1_P)_i -
J_f(\hat 1_P)_{i-1}-1}. \end{equation} 
\end{theorem}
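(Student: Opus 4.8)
The plan is to count $\MT_P(a_1,\ldots,a_k)$ by fibering it over the set $F(P)$ of fundamental quilts via the standardization map already introduced in the proof of \Cref{thm:chainenumeration}. Throughout I identify a quilt $g \in \quilt(P,C_n)$ with its jump-set map $g \colon P \to B_n$, so that $|g(x)| = \rank x$ and $g(x), g(y)$ interlace whenever $x \lessdot y$, by \Cref{prop:interlacing} (here $n \geq a_k \geq k$, so the cardinality clause applies). Given $g$, write $\bigcup_{x \in P} g(x) = \{i_1 < \cdots < i_m\}$ and let its standardization $\bar g \in F_m(P)$ be obtained by replacing each $i_j$ with $j$. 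The fiber over a fixed $\bar g = f \in F_m(P)$ consists of exactly the maps $g(x) = \phi(f(x))$ for a strictly increasing injection $\phi \colon [m] \to [n]$; conversely every such $\phi$ yields a genuine quilt, since a strictly increasing relabeling preserves both cardinalities and the interlacing inequalities.

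First I would record the structural lemma that makes the boundary contributions disappear. Because $P$ is ranked of rank $k$, every cover $x \lessdot y$ satisfies $\rank y = \rank x + 1$, hence $|f(y)| = |f(x)| + 1$. Writing $f(x) = \{s_1 < \cdots < s_p\}$ and $f(y) = \{t_1 < \cdots < t_{p+1}\}$, the interlacing of \Cref{prop:interlacing} reads $t_1 \le s_1 \le t_2 \le \cdots \le s_p \le t_{p+1}$, so $\min f(y) \le \min f(x)$ and $\max f(y) \ge \max f(x)$. Propagating these inequalities up a maximal chain from an arbitrary $x$ to $\hat 1_P$ gives $\min f(\hat 1_P) \le \min f(x)$ and $\max f(\hat 1_P) \ge \max f(x)$ for all $x$. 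Since $f$ is $m$-fundamental, $\bigcup_x f(x) = [m]$, so the top set $J_f(\hat 1_P) = \{c_1 < \cdots < c_k\}$ satisfies $c_1 = 1$ and $c_k = m$.

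Next I count the fiber subject to the top-set constraint. A quilt $g = \phi \circ f$ lies in $\MT_P(a_1,\ldots,a_k)$ exactly when $\phi(J_f(\hat 1_P)) = \{a_1,\ldots,a_k\}$; as $\phi$ is increasing, this forces $\phi(c_i) = a_i$ for every $i$. Fixing these $k$ values partitions the remaining positions of $[m]$ into the block $\{1,\ldots,c_1-1\}$, the gaps $\{c_{i-1}+1,\ldots,c_i-1\}$ for $2 \le i \le k$, and $\{c_k+1,\ldots,m\}$, and within each block $\phi$ is a strictly increasing choice out of the corresponding open interval of available values in $[n]$. This yields
\begin{equation*}
\binom{a_1 - 1}{c_1 - 1} \cdot \left(\prod_{i=2}^{k} \binom{a_i - a_{i-1} - 1}{c_i - c_{i-1} - 1}\right) \cdot \binom{n - a_k}{m - c_k}
\end{equation*}
choices. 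By the lemma, $c_1 = 1$ and $c_k = m$, so the two outer binomials reduce to $\binom{a_1-1}{0} = 1$ and $\binom{n-a_k}{0}=1$, leaving precisely the product over $2 \le i \le k$ appearing in \eqref{eq:mt}. Summing over all $f \in F(P)$ (for fundamental quilts whose top-set gaps exceed the corresponding $a$-gaps some binomial factor vanishes, consistently with the nonexistence of a valid $\phi$) gives the claimed identity.

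The main obstacle is the structural lemma of the second paragraph: the clean shape of \eqref{eq:mt}, with its product starting at $i = 2$ and no factor governing the region above the top index, is exactly the assertion that the top set $J_f(\hat 1_P)$ captures both the global minimum and the global maximum of $\bigcup_x f(x)$. This is where the hypothesis that $P$ is ranked is essential, since it forces every cover to raise the jump-set size by one, so the ``decreasing maximum'' branch of interlacing never occurs; without it the outer binomials would survive and the formula would fail.
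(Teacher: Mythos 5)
Your proposal is correct and follows essentially the same route as the paper's proof: standardize a quilt with top set $\{a_1,\ldots,a_k\}$ to a fundamental quilt, then count the increasing relabelings $\phi$ gap by gap, which produces exactly the product of binomials in \eqref{eq:mt}. The one point where you go beyond the paper is in explicitly proving (via propagating $\min$ and $\max$ through the interlacing inequalities up a maximal chain) that the top set of a fundamental quilt contains both $1$ and $m$, a fact the paper's proof uses implicitly when it omits the boundary binomials; this is a welcome clarification rather than a divergence.
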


\begin{proof}
 This is similar to the proof of
Theorem~\ref{thm:chainenumeration}. Given a chain quilt $g \in
\quilt(P,C_n)$ with top set $\{a_1,\ldots,a_k \}$, let 
$\{i_1<\dots<i_m\} = \bigcup_{x \in P} g(x) \subseteq [n]$. 
Replace all instances of $i_j$ with $j$ to get an $m$-fundamental
quilt $f_{g} \in F(P)$.  Each quilt in the inverse image under the
replacement map of an $m$-fundamental quilt $ f \in F(P)$ with
$J_f(\hat 1_{P})=\{j_1,\ldots,j_k \}$ is determined by
replacing $j_{i}$  by $a_{i}$ everywhere in the MT form of $f$ for each
$1\leq i\leq k$, making 
a choice of
$j_2-j_1-1$ elements among $a_1+1,\ldots,a_2-1$ to replace
$j_{1}+1,\dotsc , j_{2}-1$, making a choice of
$j_3-j_2-1$ elements among $a_2+1,\ldots,a_3-1$ etc., which proves
Equation~\eqref{eq:mt}.
\end{proof}

To illustrate the proof, say that we want to enumerate quilts in
$\quilt(B_3,C_{20})$ with top set $(2,10,16)$.  We can get all such
quilts from $m$-fundamental quilts for $3 \leq m \leq 12$. For
example, we can take the $5$-fundamental quilt on the right in
Figure~\ref{fig:fundamentalB3} with top set $\{1,4,5 \}$, and replace $1$ by $2$, $4$ by $10$
and $5$ by $16$ to get the correct top set. We have choices for what
we replace $2$ and $3$ by: we can select any $2$ of the elements
between $3$ and $9$ for that, and there are $\binom{7}{2}$ ways to do
that.
Since $4,5$ are adjacent values there are no further choices to make in
this case.  
On the other hand, if we take a, say, standard quilt with top set
$(1,8,12)$, like the one in Figure~\ref{fig:standardB3} we replace $1$
by $2$, $8$ by $10$ and $12$ by $16$, and we select any $6$ elements
between $3$ and $9$ to replace $2,\ldots,7$ by, and any $3$ elements
between $11$ and $15$ to replace $9,10,11$ by. Therefore we have
$\binom 7 6 \cdot \binom 5 3$ choices.  Using \Cref{thm:mt}, one can
compute $|\MT_{B_{3}}(2,10,16)|=52202240$.

\begin{example}
 From Figure~\ref{fig:fundamentalB2}, we get that
 $$|\MT_{B_2}(a_1,a_2)| = 4 + 5 \binom{a_2-a_1-1}{3-1-1} + 2
 \binom{a_2-a_1-1}{4-1-1} = (a_2-a_1+1)^2.$$
By \Cref{prop:interlacing}, one can observe 
that the following general formula holds:
\[
|\MT_{A_2(j)}(a_1,a_2)| = (a_2-a_1+1)^j.
\]
See Appendix A for the much less obvious expression for $|\MT_{B_3}(a_1,a_2,a_3)|$.
\end{example}

We conclude the section with the following observation about the
$k$-fundamental quilts for a poset of rank $k$.  These quilts are the
most compressed fundamental quilts for such a poset.

\begin{cor}\label{cor:k.k-1}
Assume $P$ has rank $k$.  Then we have
\begin{equation}\label{eq:k.cor}
|F_k(P)| = |\MT_P(1,\ldots,k)| = |\quilt (P, C_{k})| = |\quilt (P, C_{k-1})|.
\end{equation}
\end{cor}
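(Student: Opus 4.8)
The plan is to show all four quantities coincide by reducing the first two equalities to results already in hand and then proving the last equality, $|\quilt(P,C_k)| = |\quilt(P,C_{k-1})|$, by an explicit bijection, which is the only point requiring real work.

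First I would dispatch $|\MT_P(1,\dots,k)| = |\quilt(P,C_k)| = |F_k(P)|$. For the left equality, note that in $\quilt(P,C_k)$ we have $n = k = \rank P$, so the case $n \le \rank P$ of \Cref{prop:interlacing} forces $J_f(\hat 1_P) = [n] = \{1,\dots,k\}$ for \emph{every} $f \in \quilt(P,C_k)$. Since $\MT_P(1,\dots,k)$, taken with ambient chain $C_k$, is by definition the set of quilts in $\quilt(P,C_k)$ with top set $\{1,\dots,k\}$, it coincides with all of $\quilt(P,C_k)$. For the right equality, evaluating \eqref{eq:chainenumeration} of \Cref{thm:chainenumeration} at $n=k$ kills every term with $m>k$, because $\binom{k}{m}=0$, leaving $|\quilt(P,C_k)| = |F_k(P)|\binom{k}{k} = |F_k(P)|$.

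The heart of the matter is $|\quilt(P,C_k)| = |\quilt(P,C_{k-1})|$, which I would prove by a ``rank-function column'' bijection. From $\quilt(P,C_k)$ to $\quilt(P,C_{k-1})$, simply restrict $f\colon P\times C_k \to \N$ to $P\times C_{k-1}$; this is well defined since \Cref{lem:chains.bounds} gives $f(\hat 1_P,y)=y$ for all $y\in C_k$, in particular $f(\hat 1_P,k-1) = k-1 = \min\{k,k-1\}$. The inverse map sends $g\in\quilt(P,C_{k-1})$ to the extension $\bar g$ with $\bar g(x,i)=g(x,i)$ for $i\le k-1$ and $\bar g(x,k)=\rank x$. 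To see $\bar g$ is a genuine quilt, the only nontrivial condition is Boolean growth across the cover $(x,k-1)\lessdot(x,k)$, i.e.\ $\rank x \in \{g(x,k-1),\,g(x,k-1)+1\}$. This follows from two bounds on the Dedekind map $x\mapsto g(x,k-1)$, which lies in $D_{k-1}(P)$: the standard inequality $g(x,k-1)\le \rank x$, and the reverse bound $g(x,k-1)\ge \rank x - 1$, obtained by walking up a maximal chain $x = z_0 \lessdot \dots \lessdot z_{k-\rank x} = \hat 1_P$ and applying Boolean growth in the $P$-direction, since $g(\hat 1_P,k-1)=k-1$ yields $k-1 \le g(x,k-1) + (k-\rank x)$. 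The two maps are mutually inverse because \Cref{lem:chains.bounds} already forces the last column of every $f\in\quilt(P,C_k)$ to equal $x\mapsto \rank x$, so restriction loses no information.

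The main obstacle is precisely this verification that appending the rank-function column produces a valid quilt, which rests entirely on the uniform lower bound $g(x,k-1)\ge \rank x - 1$ for rank-$(k-1)$ Dedekind maps; everything else is bookkeeping. Equivalently, one may phrase the step through \Cref{thm:graph}: chain quilts of type $(P,C_k)$ are the length-$k$ walks in $G'_D(P)$ from the source to the unique sink (the rank function), while those of type $(P,C_{k-1})$ are the length-$(k-1)$ walks from the source, which necessarily end in $D_{k-1}(P)$. The lower bound is exactly the statement that every vertex of $D_{k-1}(P)$ admits a (unique) edge to the sink, so appending that final edge is the claimed bijection.
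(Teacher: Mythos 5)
Your proof is correct and follows essentially the same route as the paper: the last equality comes from the restriction map $f \mapsto f|_{P \times C_{k-1}}$, whose inverse appends the rank-function column forced by \Cref{lem:chains.bounds}, and the identification with $\MT_P(1,\ldots,k)$ comes from the forced top set $[k]$. The only cosmetic difference is that you get $|F_k(P)| = |\quilt(P,C_k)|$ by specializing \eqref{eq:chainenumeration} at $n=k$, whereas the paper specializes \Cref{thm:mt} at $(1,\ldots,k)$; you also verify the inverse of the restriction map in detail, which the paper leaves implicit.
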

\begin{proof}
The first equality holds by \Cref{thm:mt}.  Take a
quilt $f \in \quilt(P,C_k)$. Since $\rank P = \rank C_k$,
\Cref{lem:chains.bounds} gives $f(x,k) = \rank x$ for all $x \in
P$. This means that the map $\quilt (P, C_k) \longrightarrow \quilt
(P, C_{k-1})$ defined by $f \mapsto f|_{P \times C_{k-1}}$ is an
isomorphism of lattices proving the last equality.
\Cref{lem:chains.bounds} also says that the top set $J_f(\hat 1_P)$ is
$[k]$, which proves $|\quilt (P, C_{k})| = |\MT_P(1,\ldots,k)|$.
\end{proof}

\section{Enumeration of Boolean quilts} \label{sec:boolean}

Exact enumeration of Dedekind maps for $B_{n}$ and Boolean quilts is at least as difficult as
finding a formula for the Dedekind numbers. However, some bounds can
be given. For example, we can construct $2^{\binom n{\floor{n/2}}}$
$1$-Dedekind maps on $B_{n}$ by taking $f(T) = 0$ for $|T| <
\floor{n/2}$, $f(T) = 1$ for $|T| > \floor{n/2}$, $f(T) \in \{0,1\}$
for $|T| = \floor{n/2}$. It follows that $d_1(B_n) \geq 2^{\binom
n{\floor{n/2}}}$.  In 1966, Hansel proved that $d_1(B_n) \leq
3^{\binom n{\floor{n/2}}}$ \cite{hansel}. Kleitman \cite{kleitman} improved that to
\begin{equation}\label{eq:kleitman}
d_1(B_n) \leq 2^{(1+c \ln n/\sqrt n)\binom{n}{\floor{n/2}} }
\end{equation}
for some constant $c$. We will use that result in the following.

\begin{lemma} \label{lem:booleandedekind}
 There exists a constant $c > 0$ so that for all $1\leq k\leq n$,
 $$d_k(B_n) \leq 2^{k(1+c \ln n/\sqrt n)\binom{n}{\floor{n/2}} }.$$
 Furthermore, for every $\varepsilon > 0$,
 $$d_k(B_n) \geq 2^{(k-\varepsilon) \binom n {\floor{n/2}}}$$
 for large enough $n$.
\end{lemma}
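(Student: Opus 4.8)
The plan is to handle the upper and lower bounds separately; the upper bound follows immediately from results already in hand, while the lower bound requires an explicit construction of many rank-$k$ Dedekind maps.

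For the upper bound I would simply chain together \Cref{lemma:dedekindupper} and Kleitman's inequality \eqref{eq:kleitman}. The former gives $d_k(B_n) \le d_1(B_n)^k$, and substituting $d_1(B_n) \le 2^{(1+c\ln n/\sqrt n)\binom{n}{\floor{n/2}}}$ and raising to the $k$th power produces the stated bound $d_k(B_n) \le 2^{k(1+c\ln n/\sqrt n)\binom{n}{\floor{n/2}}}$. No further work is needed here.

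For the lower bound (with $k$ fixed) I would generalize the one-threshold construction already described in the text for $d_1(B_n) \ge 2^{\binom{n}{\floor{n/2}}}$ to $k$ ``free'' middle layers. First I would choose rank levels $\ell_1 < \dots < \ell_k$ with consecutive gaps $\ell_{i+1} - \ell_i \ge 2$ and each $\ell_i$ within $O(k)$ of $\floor{n/2}$, say $\ell_i = \floor{n/2} + 2i$. Then I would define $f \colon B_n \to C_k$ to depend only on $|T|$ away from these levels, setting $f(T) = 0$ for $|T| < \ell_1$, $f(T) = i$ for $\ell_i < |T| < \ell_{i+1}$, and $f(T) = k$ for $|T| > \ell_k$, while at each $T$ with $|T| = \ell_i$ we are free to choose $f(T) \in \{i-1,i\}$ independently. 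Next I would verify that every such $f$ is a valid rank-$k$ Dedekind map: Boolean growth is automatic away from the chosen levels, and on a cover touching level $\ell_i$ the gap condition forces the neighbouring sets to carry the baseline values $i-1$ and $i$, so the increment is $0$ or $1$ no matter how the free bits are set; surjectivity holds for every choice because the baseline already attains each of $0,1,\dots,k$ on the buffer levels. Since choices at distinct levels never interact, this yields $d_k(B_n) \ge 2^{\sum_{i=1}^k \binom{n}{\ell_i}}$. Finally, using $\binom{n}{\floor{n/2}+j}/\binom{n}{\floor{n/2}} \to 1$ as $n \to \infty$ for each fixed $j$, each $\binom{n}{\ell_i}$ exceeds $(1-\varepsilon/k)\binom{n}{\floor{n/2}}$ for large $n$, and summing gives $\sum_{i=1}^k \binom{n}{\ell_i} \ge (k-\varepsilon)\binom{n}{\floor{n/2}}$, hence the claim.

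The hard part is really only the bookkeeping in the lower bound: one must confirm that the buffer levels of constant value genuinely decouple the $k$ free layers, so that the free assignments are simultaneously independent of one another and harmless to both Boolean growth and surjectivity. Once the spacing condition $\ell_{i+1} - \ell_i \ge 2$ is imposed this is routine, and the comparison of the middle binomial coefficients is a standard asymptotic.
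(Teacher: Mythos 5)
Your proposal is correct and follows essentially the same route as the paper: the upper bound is obtained by combining \Cref{lemma:dedekindupper} with Kleitman's bound \eqref{eq:kleitman}, and the lower bound by an explicit construction with $k$ ``free'' rank levels near $\floor{n/2}$, separated by buffer levels of constant value, followed by the standard asymptotic $\binom{n}{\floor{n/2}+j} \sim \binom{n}{\floor{n/2}}$ for fixed $j$. The only cosmetic difference is that the paper centers its free levels at $\floor{n/2}+2i-1$ for $i = -\floor{k/2}+1,\ldots,\ceiling{k/2}$ (symmetrically about the middle layer), whereas you place them all just above $\floor{n/2}$; both choices are equally valid for the asymptotic conclusion.
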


\begin{proof}
 The upper bound follows from \Cref{lemma:dedekindupper} and \eqref{eq:kleitman}. To prove the lower bound,  assume that $n \geq 2k$. Consider the set
of maps on $B_{n}$ defined by the following criteria:
 \begin{itemize}
  \item $f(T) = 0$ if $|T| \leq \floor{n/2} - 2 \floor{k/2}$;
  \item $f(T) \in \{0,1\}$ if $|T| = \floor{n/2} - 2 \floor{k/2} + 1$;
  \item $f(T) = 1$ if $|T| = \floor{n/2} - 2 \floor{k/2} + 2$;
  \item $f(T) \in \{1,2\}$ if $|T| = \floor{n/2} - 2 \floor{k/2} + 3$;
  \item[$\vdots$]
  \item $f(T) \in \{\floor{k/2}-1,\floor{k/2}\}$ if $|T| = \floor{n/2} - 1$;
  \item $f(T) = \floor{k/2}$ if $|T| = \floor{n/2}$;
  \item $f(T) \in \{\floor{k/2},\floor{k/2}+1\}$ if $|T| = \floor{n/2} + 1$;
  \item[$\vdots$]
  \item $f(T) \in \{k-1,k\}$ if $|T| = \floor{n/2} + 2 \floor{k/2} - 1$;
  \item $f(T) = k$ if $|T| \geq \floor{n/2} + 2 \floor{k/2}$.
 \end{itemize}
 Every such map is a $k$-Dedekind map. It follows that
 $$d_k(B_n) \geq 2^{\sum_{i=-\floor{k/2}+1}^{\ceiling{k/2}} \binom{n}{\floor{n/2}+2i-1}}.$$
 For a fixed $k$ and for $-k \leq i \leq k$,
 $$\lim_{n \to \infty} \frac{\binom{n}{\floor{n/2}+i}}{\binom{n}{\floor{n/2}}} = 1.$$
 That means that for a chosen $\varepsilon > 0$, we have $\binom{n}{\floor{n/2}+2i-1} \geq (1-\varepsilon/k) \binom{n}{\floor{n/2}}$ for $i=-\floor{k/2}+1,\ldots,\ceiling{k/2}$ for $n$ large enough. The second statement of the lemma now follows.
\end{proof}

\begin{theorem}\label{thm:bounds}
Let $P$ be a finite ranked poset with least and greatest
elements. There exists a constant $c>0$ so that if $n \geq \rank P$, then
$$2^{\binom n{\floor{n/2}}} \leq |\quilt(P,B_n)| \leq
2^{b(P) (1+c \ln n/\sqrt n)\binom n{\floor{n/2}}}.$$  If $n \geq 2
\rank P$, we have the improved lower bound $$|\quilt(P,B_n)| \geq d_1(P)^{\binom
n{\floor{n/2}}}.
$$
In particular, $$ 2^{\binom
k{\floor{k/2}} \binom n{\floor{n/2}}}\leq |\quilt(B_k,B_n)| \leq
2^{k2^{k-1} (1+c \ln n/\sqrt n)\binom n{\floor{n/2}}} $$
for $n \geq 2k$.
\end{theorem}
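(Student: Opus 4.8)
The plan is to prove the four estimates separately. The upper bound is immediate from results already established, while each lower bound and the final specialization is obtained from an explicit injective construction.

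\textbf{Upper bound.} Write $k = \rank P$, so that $k \le n = \rank B_n$. Then \Cref{thm:upperbound} applies directly and gives $|\quilt(P,B_n)| \le d_1(B_n)^{b(P)}$. Substituting Kleitman's estimate \eqref{eq:kleitman}, namely $d_1(B_n) \le 2^{(1+c\ln n/\sqrt n)\binom n{\floor{n/2}}}$, and raising to the power $b(P)$ yields the claimed upper bound with no further work.

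\textbf{The lower bound $2^{\binom n{\floor{n/2}}}$.} I would build an injection $D_1(B_n)\hookrightarrow\quilt(P,B_n)$. For $g\in D_1(B_n)$ set $f_g(\hat 0_P,T)=0$ and, for $\rank x\ge 1$, put $f_g(x,T)=\min\{\rank x,\ \rank x-1+g(T),\ |T|\}$. The $|T|$-term forces $f_g(x,\emptyset)=0$, while $\min\{k,k,n\}=k$ gives $f_g(\hat 1_P,[n])=k=\min\{\rank P,\rank B_n\}$; Boolean growth in both directions follows from the elementary observation that a pointwise minimum of several maps, each growing by $0$ or $1$ along a given cover, again grows by $0$ or $1$. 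For any atom $y$ one checks $f_g(y,T)=g(T)$ for all $T$, so $g$ is recoverable and the assignment is injective. Hence $|\quilt(P,B_n)|\ge d_1(B_n)\ge 2^{\binom n{\floor{n/2}}}$, the last step being the middle-level construction recalled just before \Cref{lem:booleandedekind}.

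\textbf{The improved lower bound and the $B_k$ specialization.} Here the hypothesis $n\ge 2k$ enters through $k\le\lceil n/2\rceil$, equivalently $m+k\le n$ with $m=\floor{n/2}$. The idea is to place one independent ``antichain choice'' at each of the $\binom n m$ middle-level subsets. Concretely, I would prescribe a partial map on $P\times\{T:|T|\le m\}$ that is identically $0$ on every level $|T|<m$ and, for each $T$ with $|T|=m$, is given by an independently chosen $h_T\in D_1(P)$ via $f(x,T)=h_T(x)$. Since each $h_T$ is monotone with $h_T(\hat 0_P)=0$ and $h_T(\hat 1_P)=1$, this assignment satisfies all Boolean growth relations among the prescribed cells, so the quilts extending it form a nonempty meet/join-closed family; I take its greatest element, obtained by propagating values upward at the maximal rate allowed by the rank cap $f(x,T)\le\rank x$. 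Because $m+k\le n$, the value at $\hat 1_P$ climbs to $k$ by level $n$, so the extension is a genuine quilt (indeed \Cref{lem:chains.bounds} then forces $f(\hat 1_P,[n])=k$). The middle-level values are left untouched, so distinct families $(h_T)$ yield distinct quilts, giving $d_1(P)^{\binom n{\floor{n/2}}}$ of them. For the last statement, take $P=B_k$: then $b(B_k)=\sum_{i=0}^k i\binom k i=k2^{k-1}$ and $d_1(B_k)\ge 2^{\binom k{\floor{k/2}}}$ (the same middle-level construction applied to $B_k$), and the two displayed inequalities for $\quilt(B_k,B_n)$ follow from the improved lower bound and the general upper bound.

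\textbf{Main obstacle.} I expect the improved lower bound to be the crux. The delicate point is that the independent middle-level choices must assemble into a \emph{single valid} quilt that still attains full rank at the top. The device of prescribing the data only on the lower half $\{|T|\le m\}$ and then taking the maximal extension is what decouples the choices, leaving the climb to rank $k$ to be handled automatically by the buffer of $\lceil n/2\rceil\ge k$ levels above the middle; verifying that this greatest extension exists and reaches $f(\hat 1_P,[n])=k$ is the step that requires the most care and is exactly where $n\ge 2k$ is used.
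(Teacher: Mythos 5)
Your overall architecture is correct and the upper bound and the $B_k$ specialization coincide with the paper's argument (Theorem~\ref{thm:upperbound} plus Kleitman, and $b(B_k)=k2^{k-1}$, $d_1(B_k)\geq 2^{\binom k{\floor{k/2}}}$). For the first lower bound you take a genuinely different route: the paper exhibits an explicit family of $2^{\binom n{\floor{n/2}}}$ quilts with a single binary choice $\epsilon_T$ at each middle-level subset, whereas you inject all of $D_1(B_n)$ via $f_g(x,T)=\min\{\rank x,\ \rank x-1+g(T),\ |T|\}$; this checks out (the three terms each obey Boolean growth along any cover, so their minimum does, and $f_g(y,\cdot)=g$ on atoms gives injectivity), and it yields the slightly stronger intermediate statement $|\quilt(P,B_n)|\geq d_1(B_n)$. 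The one place where your write-up falls short of a proof is the improved lower bound: the assertion that the family of quilts extending the prescribed data on $\{T:|T|\leq\floor{n/2}\}$ is nonempty is precisely the content of that step, and you defer it to an unexhibited ``greatest extension.'' (Closure of the extension family under pointwise min and max is easy; nonemptiness is not automatic.) The paper avoids this entirely by writing down one explicit extension that does not depend on the choices $h_T$ at all, namely $f(x,T)=\min\{\rank x,\ |T|-\floor{n/2},\ k\}$ for $|T|>\floor{n/2}$; compatibility with the middle level is then a one-line check because $h_T$ takes values in $\{0,1\}$, and $f(\hat 1_P,[n])=\min\{k,\ \ceiling{n/2}\}=k$ is exactly where $n\geq 2\rank P$ enters. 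Your sketch can be completed along the lines you indicate (the maximal extension $\min\{\rank x,\ |T|-\floor{n/2}+\min_{T'\subseteq T,\,|T'|=\floor{n/2}}h_{T'}(x)\}$ does satisfy Boolean growth since the $h_{T'}$ are $\{0,1\}$-valued), but as written that verification is missing, and replacing the appeal to a greatest element by the explicit formula is the cleaner fix.
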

\begin{proof}
By \Cref{thm:upperbound}, we have
$$|\quilt(P,B_n)| \leq d_1(B_n)^{b(P)},$$
and the upper bound for $|\quilt(P,B_n)|$ now follows from \eqref{eq:kleitman}.

The $2^{\binom n{\floor{n/2}}}$ quilts of type $(P, B_{n})$ defined by
$$f(x, T) =
\left\{ \begin{array}{ccl}
 \min\{\rank x,\ |T|\} & : & |T| < \floor{k/2} \\
 \min\{\rank x,\ \floor{k/2}\} & : & \floor{k/2} \leq |T| < \floor{n/2} \\
 \min\{\rank x,\ \floor{k/2}+\epsilon_T\} & : & |T| = \floor{n/2} \\
 \min\{\rank x,\ \floor{k/2}+1\} & : & \floor{n/2} < |T| \leq n - \ceiling{k/2}  \\
 \min\{\rank x,\ |T|-n+k\} & : & n - \ceiling{k/2} <  |T|
\end{array} \right., $$
where $\epsilon_T \in \{0,1\}$, prove the first lower bound.

For the second lower bound, assume $n \geq 2k$.
For each $T \subseteq [n]$ with $|T| = \floor{n/2}$, choose a
1-Dedekind map $g_T \in D_1(P)$. Then, each such collection of choices
determines a distinct quilt of type $(P,B_n)$ given by
\[
f(x,T) = \begin{cases}
0 &  |T| < \floor{n/2} \\
g_T(x) &  |T| = \floor{n/2} \\
\min\{\rank x,\ |T|-\floor{n/2},k\} &  |T| > \floor{n/2}.
\end{cases}
\]
It follows that $|\quilt(P,B_n)| \geq d_1(P)^{\binom
 n{\floor{n/2}}}$. The last inequality follows from $d_1(B_k) \geq
 2^{\binom k{\floor{k/2}}}$ from the beginning of this section.
\end{proof}

\begin{remark}
\Cref{thm:bounds}  guarantees that for a poset $P$, there are positive numbers $A_P$ and $B_P$ such that
 $$\frac{\ln |\quilt(P,B_n)|}{\binom{n}{\floor{n/2}}} \in [A_P,B_P]$$
 for $n \geq \rank P$.  It is natural to ask if the limit
 $$L(P) = \lim_{n \to \infty} \frac{\ln |\quilt(P,B_n)|}{\binom{n}{\floor{n/2}}}$$
 exists. By the last part of the theorem, if $L(B_k)$ exists, it must be in the interval
 $$\left[\binom k{\floor{k/2}} \ln 2, k2^{k-1} \ln 2\right].$$
We do not have enough data to state an explicit conjecture, but we believe that the limit does indeed exist; if we had to venture a guess as to what this number would be, we would say $L(P) = b(P) \ln 2$. In other words, we believe that $2^{b(P) \binom n{\floor{n/2}}}$ is the best estimate for $|\quilt(P,B_n)|$ among functions of the form $C^{\binom n{\floor{n/2}}}$.
\end{remark}

\section{Final remarks} \label{sec:final}

\subsection*{Representability}

Call a quilt $f \in \quilt(B_k, B_n)$ \emph{representable} if there
exists a matrix $A \in \R^{k \times n}$, $\rank A = \min\{k,n\}$, so
that $f(I,J)$ is equal to the rank of the matrix obtained by taking
rows in $I$ and columns in $J$ in the matrix $A$. For $n = k = 2$,
there are $7$ representable quilts $f_1,\ldots,f_7$ coming from, say,
matrices $\left[ \begin{smallmatrix} 1 & 0 \\ 0 & 1 \end{smallmatrix}
\right]$, $\left[ \begin{smallmatrix} 0 & 1 \\ 1 & 0 \end{smallmatrix}
\right]$, $\left[ \begin{smallmatrix} 1 & 1 \\ 0 & 1 \end{smallmatrix}
\right]$, $\left[ \begin{smallmatrix} 1 & 0 \\ 1 & 1 \end{smallmatrix}
\right]$, $\left[ \begin{smallmatrix} 1 & 1 \\ 1 & 0 \end{smallmatrix}
\right]$, $\left[ \begin{smallmatrix} 0 & 1 \\ 1 & 1 \end{smallmatrix}
\right]$, $\left[ \begin{smallmatrix} 1 & 1 \\ -1 &
1 \end{smallmatrix} \right]$.  The lattice $\quilt(B_2, B_2) \cong
B_4$, on the other hand, contains $16$ elements, so they are not all representable.  It would be
interesting to understand representable quilts better.

\begin{open}\label{open:boolean.realizability}
Characterize the representable quilts of type $(B_{k}, B_{n})$.
\end{open}

\begin{open}\label{open:flag.realizability}
Characterize the representable chain quilts of types $(C_{k}, B_{n})$ and
those which correspond to $f_{w}$ for some $w \in W_{n,k}$.  
\end{open}

For comparison, all chain quilts of types $(C_{k}, C_{n})$ are
representable.  In fact, they can be realized as the rank functions of
partial permutation matrices \cite[Lem. 3.1]{Fulton1}. This fact was
important in Fulton's definition of the essential set of a
permutation.  We note that the referee asked if one can characterize
the analog of the essential set for all quilts.  This is currently an
open problem.

\subsection*{Dedekind--MacNeille completion}

It is well known that the lattice of alternating sign matrices is the
Dedekind--MacNeille completion of (i.e., the smallest lattice
containing) the strong Bruhat order on $S_n$. A natural question is
whether the lattice $\quilt(B_k, B_n)$ is the Dedekind--MacNeille
completion of the poset of representable quilts. The answer, however,
is no. The poset and the completion are shown in \Cref{fig:DMC}.  The
following problem inspired the exploration of quilts as a
generalization of ASMs.  However, it remains open.

\begin{open}\label{open:flag.completion}(Posed by Stark Ryan and
independently by Jessica Striker) What is the Dedekind--MacNeille
completion of the medium roast partial order on Fubini words in
$W_{n,k}$ defined in \Cref{sec:motivation}?  By
\Cref{cor:medium.roast.quilts}, this complete lattice must be
isomorphic to a sublattice of $\quilt(C_{k},B_{n})$ and contain the
quilts of the form $f_{w}$ for $w \in W_{n,k}$.
\end{open}

\begin{open}\label{open:bbcompletion}
Find the Dedekind--MacNeille completion of the poset of representable
quilts of type $(B_{k},B_{n})$.
\end{open}

\begin{figure}[!ht]
\begin{center}
\begin{tikzpicture}
\node at (0,-1){$\phantom{g_0}$};
\node at (-1,0) {$f_1$};
\node at (1,0) {$f_2$};
\node at (-1.5,1) {$f_3$};
\node at (-0.5,1) {$f_4$};
\node at (0.5,1) {$f_5$};
\node at (1.5,1) {$f_6$};
\node at (0,2) {$f_7$};
\draw (-1.125,0.25) -- (-1.375,0.75);
\draw (-0.875,0.25) -- (-0.625,0.75);
\draw (0.875,0.25) -- (0.625,0.75);
\draw (1.125,0.25) -- (1.375,0.75);
\draw (-1.25,1.25) -- (-0.25,1.75);
\draw (-0.375,1.25) -- (-0.125,1.75);
\draw (0.375,1.25) -- (0.125,1.75);
\draw (1.25,1.25) -- (0.25,1.75);
\end{tikzpicture} \qquad
\begin{tikzpicture}
\node at (0,-1) {$g_0$};
\node at (-1,0) {$g_1$};
\node at (1,0) {$g_2$};
\node at (-1.5,1) {$g_3$};
\node at (-0.5,1) {$g_4$};
\node at (0.5,1) {$g_5$};
\node at (1.5,1) {$g_6$};
\node at (0,2) {$g_7$};
\draw (-1.125,0.25) -- (-1.375,0.75);
\draw (-0.875,0.25) -- (-0.625,0.75);
\draw (0.875,0.25) -- (0.625,0.75);
\draw (1.125,0.25) -- (1.375,0.75);
\draw (-1.25,1.25) -- (-0.25,1.75);
\draw (-0.375,1.25) -- (-0.125,1.75);
\draw (0.375,1.25) -- (0.125,1.75);
\draw (1.25,1.25) -- (0.25,1.75);
\draw (-0.25,-0.75) -- (-0.75,-0.25);
\draw (0.25,-0.75) -- (0.75,-0.25);
\end{tikzpicture}
\caption{Induced poset of the 7 representable quilts of type  $(B_{2}, B_{2})$
and its Dedekind--MacNeille  completion.  Note, $|\quilt(B_{2}, B_{2})|=16$. }
\label{fig:DMC}
\end{center}

\end{figure}
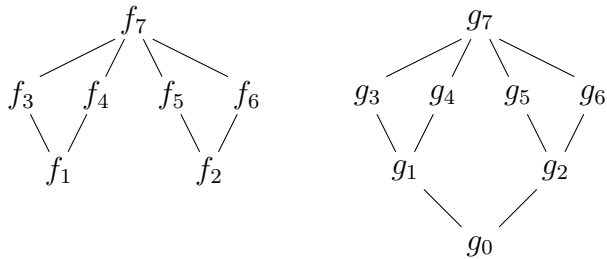

\subsection*{Irreducible quilts}

It is natural to look at the irreducible elements of a distributive lattice. It turns out that the irreducible elements of the quilt lattice, i.e.~the \emph{irreducible quilts}, have an elegant description, see~\cite{hkr}. We thank Nathan Reading for suggesting this direction of research.

\subsection*{Quilt polytopes}

There are beautiful results about the polytopes of alternating sign
matrices, matroids, and flag matroids, see e.g.~\cite{striker} and
\cite{CAMERON.DINU.MICHALEK.SEYNNAEVE}.  In 2018, Sanyal-Stump
\cite{Sanyal-Stump.2018} defined the \emph{Lipschitz polytope} of a
poset $P$, denoted $\mathcal{L}(P)$, as the set of functions $f \in
\mathbb{R}^{P}$ such that $ 0\leq f(a)\leq 1$ for all minimal elements
$a \in P$ and $0\leq f(y)-f(x)\leq 1 $ for all $x \lessdot y$ in $P$.
Therefore, the vertices of the Lipschitz polytopes are closely related
to the Dedekind maps on $P$.  This variation on Boolean growth leads
us to define $\mathcal{L}(P,Q)$ for a pair of finite ranked posets
$P,Q$ with least and greatest elements as the set of functions $f \in
\mathbb{R}^{P\times Q}$ such that
\begin{itemize}
 \item $f(x,y) = 0$ whenever $x = \hat 0_P$ or $y = \hat 0_Q$,
\item $f(\hat 1_P, \hat 1_Q) = \min\{\rank P,\  \rank
Q\}$, and
\item if $(x,y) \lessdot (x',y')$ in $P \times Q$, then $0\leq
f(x',y')-f(x,y)\leq 1$ (bounded growth).
\end{itemize}
Thus, integer lattice points of $\mathcal{L}(P,Q)$ are exactly the
quilts of type $(P,Q)$.  What is the Ehrhart polynomial for these
polytopes?  What more can be said about these
polytopes? This is the topic of the ongoing project~\cite{bk}.

\subsection*{Enumeration} As we stated in the introduction, one of the
most fascinating facts in the area is that there is a product formula
for the number of square ASMs. Corollary~\ref{cor:powerofasm} gives a
simple generalization of this statement. Is there a simple formula for
$|\quilt(P,P)|$ for some family of posets $P \neq j C_n$? Can we at
least find asymptotic formulas for $|\quilt(P_n,P_n)|$ for some nice
families of posets $P_n$, or upper and lower estimates?  Can we
improve the bounds for $|\quilt(P,B_n)|$ beyond \Cref{thm:bounds}?

\subsection*{Alternative definitions} We can generalize the definition
of a quilt slightly to account for finite ranked posets that do not
necessarily have a unique minimal and maximal element. In that case, we should replace the first two conditions in Definition~\ref{def:quilt} by
\begin{itemize}
 \item $f(x,y) = 0$ if $x$ is a minimal element of $P$ or $y$ is a minimal element of $Q$,
 \item $f(x,y) = \min\{\rank P,\ \rank Q\}$ if $x$ is a maximal element of $P$ and $y$ is a maximal element of $Q$.
\end{itemize}
Most of our results still hold, but not all; for example, there is no longer the concept of a monotone triangle with a specified top set.
Another possibility is to keep the least and greatest elements and remove the condition $f(\hat 1_P, \hat 1_Q) = \min\{\rank P,\ \rank Q\}$. One drawback of that is that elements of $\quilt(C_k,C_n)$ are no longer (in bijection with) alternating sign matrices; instead, we get \emph{partial alternating sign matrices}, see~\cite{partialASM}.

\subsection*{Standard quilts} The standard quilts
we defined in Section~\ref{sec:chain} seem worthy of further study, as
they generalize shifted standard Young tableau of shape
$(k,k-1,\ldots,1)$ and determine the asymptotics of $|\quilt(P, C_n)|$. Is there a simple way to count standard quilts
(generalizing the hook-length formula) or is that a \#P-complete
problem like computing $|\quilt(P, C_1)|$? Since determining the asymptotics of antichain quilts is \#P-complete by \Cref{thm:antichain} and since antichain quilts are typically simpler than chain quilts, we would assume that enumerating standard quilts is also a \#P-hard problem.

\subsection*{Monotone triangles} In Section~\ref{sec:chain}, we proved
that $m_P(a_1,\ldots,a_k)=|\MT_P(a_1,\ldots,a_k)|$ is a polynomial
function of $a_{1},\dotsc ,a_{k}$. For $P = C_k$, the crucial results
are the \emph{operator formula}, expressing the number of monotone
triangles via the number of Gelfand--Tsetlin patterns, and the
\emph{rotation formula}, which states that
$m_{C_k}(a_2,\ldots,a_k,a_1-k)| = (-1)^{k-1} m_{C_k}(a_1,\ldots,a_k)$,
see~\cite[Theorem 1]{MTs} and~\cite[Lemma 5]{Fischer.2005}. Is there a
way to generalize these results to arbitrary (or at least some)
posets?

\subsection*{Generalizing ASMs}\label{sub:generalizingASMs}

The literature on permutations and alternating sign matrices provide a
rich source of problems for quilts, some of which are mentioned in the
introduction and \Cref{rem:TerwilligerPoset}.  Following
Hamaker--Reiner \cite{hamakerreiner}, what are the descents for
quilts?  Is there a Hopf algebra
interpretation for quilts and an analog of the shuffle product?  Does shellability hold for the appropriately defined poset of Dedekind maps?  See~\cite{hk} for some recent progress in the direction of shellability. See
also the work of Cheballah--Giraudo--Maurice, who defined a Hopf algebra
with basis given by alternating sign matrices \cite{CGM}.

We could also consider generalizing Terwilliger's extension of the
Boolean lattice to include interlacing sets not just of the type in
\eqref{eq:interlacing.1}, but also to include
\eqref{eq:interlacing.2}.  What can be said about the interlacing
Boolean lattice with both types of interlacing conditions?

\section*{Appendix A: Computational Results}

\allowdisplaybreaks

The next three equalities illustrate Theorems~\ref{thm:antichain} and~\ref{thm:chainenumeration}. The formula for $|\quilt(B_4,C_n)|$ was actually produced using Theorem~\ref{thm:graph}.

\begin{multline*}
  \scriptstyle
  |\quilt(B_4,A_2(j))| = 2 \cdot 16^j + 12 \cdot 20^j + 6 \cdot 25^j + 24 \cdot 26^j + 8 \cdot 27^j + 24 \cdot 34^j + 8 \cdot 35^j + 14 \cdot 36^j + 8 \cdot 38^j  + 24 \cdot 39^j + 24 \cdot 42^j + 6 \cdot   47^j + 24 \cdot 49^j \\
  \scriptstyle
  + 12 \cdot 50^j + 24 \cdot 52^j + 24 \cdot 55^j + 24 \cdot 59^j + 12 \cdot 61^j + 49 \cdot 64^j + 24 \cdot 70^j + 20 \cdot 72^j + 24 \cdot 77^j + 12 \cdot   80^j + 4 \cdot 81^j + 12 \cdot 82^j + 12 \cdot 83^j + 24 \cdot 90^j + 24 \cdot 91^j \\
  \scriptstyle
  + 8 \cdot 95^j + 6 \cdot 100^j + 24 \cdot 101^j + 6 \cdot 102^j + 8 \cdot 103^j + 24 \cdot 104^j + 2 \cdot 113^j + 24 \cdot 114^j + 24 \cdot 115^j
  + 8 \cdot 122^j + 12 \cdot 128^j + 4 \cdot 129^j + 12 \cdot 133^j + 8 \cdot 147^j + 166^j
 \end{multline*}

 \begin{multline*}
\scriptstyle
 \!\!\!|\quilt(B_3,C_n)| = 1344 \binom n{12}+10080 \binom n{11}+33444 \binom n{10} +64506 \binom n{9} +79788 \binom n{8} +65652 \binom n{7}+35876 \binom n{6}+12471 \binom n{5}+2456 \binom n{4}+199 \binom n{3} \\
 \scriptstyle
 \text{for } n \geq 3
 \end{multline*}

 \begin{multline*}
  \scriptstyle
  |\quilt(B_4,C_n)| = 10651644896477184 \binom n {32} + 197055430584827904 \binom n {31} + 1738665057137541120 \binom n {30} + 9735818288500039680 \binom n {29} \\
  \scriptstyle
  + 38839556977856928768 \binom n {28}  + 117471942156471614976 \binom n {27} + 279881902757513059200 \binom n {26} + 538793272789014417984 \binom n {25}\\
  \scriptstyle
  + 852913906502788631808 \binom n {24} + 1124093660783042183328 \binom n {23} + 1244204557392229952160 \binom n {22} + 1163423387552452501296 \binom n {21} \\
  \scriptstyle
  + 922421269447363713000 \binom n {20} + 621185943976110723780 \binom n {19} + 355315109292664467516 \binom n {18} + 172335637248751133958 \binom n {17} \\
  \scriptstyle
  + 70636458716011510126 \binom n {16} + 24338243155860965610 \binom n {15} + 6997548154002120846 \binom n {14} + 1662187981311784640 \binom n {13} \\
  \scriptstyle
  + 321944626547285880 \binom n {12} + 49970302238834940 \binom n {11} + 6073377257995792 \binom n {10} + 560131126345528 \binom n {9} \\
  \scriptstyle
  + 37512372358044 \binom n {8} + 1710540931365 \binom n {7} + 48063694812 \binom n {6} + 703244285 \binom n {5} + 3813042 \binom n {4} \qquad \text{for } n \geq 4
 \end{multline*}

 The following is a list of the numbers of fundamental quilts for
 $B_3$ with a given top set given in reverse lexicographic order:

 $$ \begin{array}{|c|c||c|c||c|c||c|c||c|c||c|c||c|c|} \hline
    \scriptstyle 1, 2, 3 & \scriptstyle 199 & \scriptstyle 1, 2, 4 & \scriptstyle 1228 & \scriptstyle 1, 3, 4 & \scriptstyle 1228 & \scriptstyle 1, 2, 5 & \scriptstyle 3428 & \scriptstyle  1, 3, 5 & \scriptstyle 5615 & \scriptstyle 1, 4, 5 & \scriptstyle 3428 & \scriptstyle 1, 2, 6 & \scriptstyle 5175 \\ \hline
    \scriptstyle 1, 3, 6 & \scriptstyle 12763 & \scriptstyle 1, 4, 6 & \scriptstyle 12763 & \scriptstyle 1, 5, 6 & \scriptstyle 5175 & \scriptstyle 1, 2, 7 & \scriptstyle 4416 & \scriptstyle 1, 3, 7 & \scriptstyle 16518 & \scriptstyle 1, 4, 7 & \scriptstyle 23784 & \scriptstyle 1, 5, 7 & \scriptstyle 16518 \\ \hline
    \scriptstyle 1, 6, 7 & \scriptstyle 4416 & \scriptstyle 1, 2, 8 & \scriptstyle 2016 & \scriptstyle 1, 3, 8 & \scriptstyle 12501 & \scriptstyle 1, 4, 8 & \scriptstyle 25377 & \scriptstyle 1, 5, 8 & \scriptstyle 25377 & \scriptstyle 1, 6, 8 & \scriptstyle 12501 & \scriptstyle 1, 7, 8 & \scriptstyle 2016 \\ \hline
    \scriptstyle 1, 2, 9 & \scriptstyle 384 & \scriptstyle 1, 3, 9 & \scriptstyle 5184 & \scriptstyle 1, 4, 9 & \scriptstyle 16038 & \scriptstyle 1, 5, 9 & \scriptstyle 21294 & \scriptstyle 1, 6, 9 & \scriptstyle 16038 & \scriptstyle 1, 7, 9 & \scriptstyle 5184 & \scriptstyle 1, 8, 9 & \scriptstyle 384 \\ \hline
    \scriptstyle 1, 3, 10 & \scriptstyle 912 & \scriptstyle 1, 4, 10 & \scriptstyle 5664 & \scriptstyle 1, 5, 10 & \scriptstyle 10146 & \scriptstyle 1, 6, 10 & \scriptstyle 10146 & \scriptstyle 1, 7, 10 & \scriptstyle 5664 & \scriptstyle 1, 8, 10 & \scriptstyle 912 & \scriptstyle 1, 4, 11 & \scriptstyle 864 \\ \hline
    \scriptstyle 1, 5, 11 & \scriptstyle 2640 & \scriptstyle  1, 6, 11 & \scriptstyle 3072 & \scriptstyle 1, 7, 11 & \scriptstyle 2640 & \scriptstyle 1, 8, 11 & \scriptstyle 864 & \scriptstyle 1, 5, 12 & \scriptstyle 288 & \scriptstyle 1, 6, 12 & \scriptstyle 384 & \scriptstyle 1, 7, 12 & \scriptstyle 384 \\ \hline
    \scriptstyle 1, 8, 12 & \scriptstyle 288 \\ \cline{1-2}
 \end{array}$$

 Note that the sum of the numbers of fundamental quilts with last element of the top set equal to $m$ is equal to the coefficient of $\binom n m$ in the formula for $|\quilt(B_3,C_n)|$, e.g.\ $3428+5615+3428 = 12471$. Using this table, we can compute $|\MT_{B_3}(a_1,a_2,a_3)|$ using Theorem~\ref{thm:mt}:

 \begin{multline*} \scriptstyle
    |\MT_{B_3}(a_1,a_2,a_3)| = 199 + 1228 (a_3-a_2-1) + 1228 (a_2-a_1-1) + \dots + 384 \binom{a_2-a_1-1}5 \binom{a_3-a_2-1}4 + 288 \binom{a_2-a_1-1}6 \binom{a_3-a_2-1}3 \\
    \scriptstyle
    = \frac{1}{15} a_{1} a_{2}^8-\frac{1}{15} a_{2}^8 a_{3}-\frac{2 a_{2}^8}{15}-\frac{4}{15} a_{1}^2 a_{2}^7+\frac{4}{15} a_{2}^7 a_{3}^2+\frac{8}{15} a_{1} a_{2}^7+\frac{8}{15} a_{2}^7 a_{3} +\frac{7}{15} a_{1}^3 a_{2}^6-\frac{7}{15} a_{2}^6 a_{3}^3-\frac{14}{15} a_{1}^2 a_{2}^6-\frac{7}{15} a_{1} a_{2}^6 a_{3}^2\\
    \scriptstyle
    -\frac{14}{15} a_{2}^6 a_{3}^2-\frac{3}{40} a_{1} a_{2}^6 +\frac{7}{15} a_{1}^2 a_{2}^6 a_{3}-\frac{28}{15} a_{1} a_{2}^6 a_{3}+\frac{3}{40} a_{2}^6 a_{3} +\frac{3 a_{2}^6}{20}-\frac{7}{15} a_{1}^4 a_{2}^5+\frac{7}{15} a_{2}^5 a_{3}^4+\frac{14}{15} a_{1}^3 a_{2}^5+\frac{14}{15} a_{1} a_{2}^5 a_{3}^3+\frac{14}{15} a_{2}^5 a_{3}^3+\frac{9}{40} a_{1}^2 a_{2}^5\\
    \scriptstyle
    +\frac{14}{5} a_{1} a_{2}^5 a_{3}^2-\frac{9}{40} a_{2}^5 a_{3}^2-\frac{9}{20} a_{1} a_{2}^5 -\frac{14}{15} a_{1}^3 a_{2}^5 a_{3}+\frac{14}{5} a_{1}^2 a_{2}^5 a_{3}-\frac{9}{20} a_{2}^5 a_{3}+\frac{4}{15} a_{1}^5 a_{2}^4-\frac{4}{15} a_{2}^4 a_{3}^5-\frac{1}{3} a_{1}^4 a_{2}^4-a_{1} a_{2}^4 a_{3}^4-\frac{1}{3} a_{2}^4 a_{3}^4-\frac{31}{24} a_{1}^3 a_{2}^4\\
    \scriptstyle
    -\frac{1}{3} a_{1}^2 a_{2}^4 a_{3}^3 -\frac{10}{3} a_{1} a_{2}^4 a_{3}^3+\frac{31}{24} a_{2}^4 a_{3}^3+\frac{21}{8} a_{1}^2 a_{2}^4+\frac{1}{3} a_{1}^3 a_{2}^4 a_{3}^2-2 a_{1}^2 a_{2}^4 a_{3}^2-\frac{11}{4} a_{1} a_{2}^4 a_{3}^2+\frac{21}{8} a_{2}^4 a_{3}^2-\frac{13}{12} a_{1} a_{2}^4+a_{1}^4 a_{2}^4 a_{3} -\frac{10}{3} a_{1}^3 a_{2}^4 a_{3}+\frac{11}{4} a_{1}^2 a_{2}^4 a_{3}\\
    \scriptstyle
    -3 a_{1} a_{2}^4 a_{3}+\frac{13}{12} a_{2}^4 a_{3}-\frac{6 a_{2}^4}{5}-\frac{1}{15} a_{1}^6 a_{2}^3+\frac{1}{15} a_{2}^3 a_{3}^6-\frac{4}{15} a_{1}^5 a_{2}^3+\frac{2}{3} a_{1} a_{2}^3 a_{3}^5 -\frac{4}{15} a_{2}^3 a_{3}^5+\frac{53}{24} a_{1}^4 a_{2}^3+\frac{1}{3} a_{1}^2 a_{2}^3 a_{3}^4+\frac{8}{3} a_{1} a_{2}^3 a_{3}^4 -\frac{53}{24} a_{2}^3 a_{3}^4\\
    \scriptstyle
    -\frac{9}{2} a_{1}^3 a_{2}^3+\frac{4}{3} a_{1}^2 a_{2}^3 a_{3}^3+\frac{11}{3} a_{1} a_{2}^3 a_{3}^3-\frac{9}{2} a_{2}^3 a_{3}^3 +\frac{13}{6} a_{1}^2 a_{2}^3-\frac{1}{3} a_{1}^4 a_{2}^3 a_{3}^2+\frac{4}{3} a_{1}^3 a_{2}^3 a_{3}^2+3 a_{1} a_{2}^3 a_{3}^2-\frac{13}{6} a_{2}^3 a_{3}^2+\frac{12}{5} a_{1} a_{2}^3-\frac{2}{3} a_{1}^5 a_{2}^3 a_{3}+\frac{8}{3} a_{1}^4 a_{2}^3 a_{3}-\\
    \scriptstyle
    \frac{11}{3} a_{1}^3 a_{2}^3 a_{3} +3 a_{1}^2 a_{2}^3 a_{3}+\frac{12}{5} a_{2}^3 a_{3}+\frac{1}{5} a_{1}^6 a_{2}^2-\frac{1}{5} a_{1} a_{2}^2 a_{3}^6+\frac{1}{5} a_{2}^2 a_{3}^6-\frac{13}{15} a_{1}^5 a_{2}^2-\frac{2}{5} a_{1}^2 a_{2}^2 a_{3}^5-\frac{2}{5} a_{1} a_{2}^2 a_{3}^5+\frac{13}{15} a_{2}^2 a_{3}^5 +\frac{17}{24} a_{1}^4 a_{2}^2+\frac{1}{3} a_{1}^3 a_{2}^2 a_{3}^4\\
    \scriptstyle
    -3 a_{1}^2 a_{2}^2 a_{3}^4+\frac{55}{24} a_{1} a_{2}^2 a_{3}^4+\frac{17}{24} a_{2}^2 a_{3}^4+\frac{133}{40} a_{1}^3 a_{2}^2-\frac{1}{3} a_{1}^4 a_{2}^2 a_{3}^3+\frac{8}{3} a_{1}^3 a_{2}^2 a_{3}^3 -\frac{121}{12} a_{1}^2 a_{2}^2 a_{3}^3+\frac{32}{3} a_{1} a_{2}^2 a_{3}^3-\frac{133}{40} a_{2}^2 a_{3}^3-\frac{361}{60} a_{1}^2 a_{2}^2+\frac{2}{5} a_{1}^5 a_{2}^2 a_{3}^2\\
    \scriptstyle
    -3 a_{1}^4 a_{2}^2 a_{3}^2+\frac{121}{12} a_{1}^3 a_{2}^2 a_{3}^2-\frac{41}{2} a_{1}^2 a_{2}^2 a_{3}^2 +\frac{659}{40} a_{1} a_{2}^2 a_{3}^2-\frac{361}{60} a_{2}^2 a_{3}^2-\frac{127}{120} a_{1} a_{2}^2+\frac{1}{5} a_{1}^6 a_{2}^2 a_{3}-\frac{2}{5} a_{1}^5 a_{2}^2 a_{3}-\frac{55}{24} a_{1}^4 a_{2}^2 a_{3}+\frac{32}{3} a_{1}^3 a_{2}^2 a_{3}-\frac{659}{40} a_{1}^2 a_{2}^2 a_{3}\\
    \scriptstyle
  +\frac{29}{6} a_{1} a_{2}^2 a_{3}+\frac{127}{120} a_{2}^2 a_{3}+\frac{11 a_{2}^2}{60}-\frac{1}{5} a_{1}^6 a_{2}+\frac{1}{5} a_{1}^2 a_{3}^6
   a_{2}-\frac{2}{5} a_{1} a_{2} a_{3}^6+\frac{1}{5} a_{2} a_{3}^6+\frac{22}{15} a_{1}^5 a_{2}-\frac{2}{15} a_{1}^3 a_{2} a_{3}^5 +\frac{8}{5} a_{1}^2 a_{2} a_{3}^5-\frac{44}{15} a_{1} a_{2} a_{3}^5+\frac{22}{15} a_{2} a_{3}^5\\
   \scriptstyle
   -\frac{529}{120} a_{1}^4 a_{2}-\frac{2}{3} a_{1}^3 a_{2} a_{3}^4+\frac{121}{24} a_{1}^2 a_{2} a_{3}^4-\frac{35}{4} a_{1} a_{2} a_{3}^4+\frac{529}{120} a_{2} a_{3}^4 +\frac{289}{60} a_{1}^3 a_{2}+\frac{2}{15} a_{1}^5 a_{2} a_{3}^3-\frac{2}{3} a_{1}^4 a_{2} a_{3}^3+\frac{41}{6} a_{1}^2 a_{2} a_{3}^3-\frac{659}{60} a_{1} a_{2} a_{3}^3+\frac{289}{60} a_{2} a_{3}^3\\
   \scriptstyle +\frac{127}{120} a_{1}^2 a_{2}-\frac{1}{5} a_{1}^6 a_{2} a_{3}^2 +\frac{8}{5} a_{1}^5 a_{2} a_{3}^2-\frac{121}{24} a_{1}^4 a_{2} a_{3}^2+\frac{41}{6} a_{1}^3 a_{2} a_{3}^2-\frac{29}{12} a_{1} a_{2} a_{3}^2-\frac{127}{120} a_{2} a_{3}^2-\frac{11}{60} a_{1} a_{2}-\frac{2}{5} a_{1}^6 a_{2} a_{3}+\frac{44}{15} a_{1}^5 a_{2} a_{3} -\frac{35}{4} a_{1}^4 a_{2} a_{3}\\
   \scriptstyle+\frac{659}{60} a_{1}^3 a_{2} a_{3}-\frac{29}{12} a_{1}^2 a_{2} a_{3}-\frac{11}{60} a_{2} a_{3}+\frac{a_{1}^6}{15}-\frac{1}{15} a_{1}^3 a_{3}^6+\frac{1}{5} a_{1}^2 a_{3}^6-\frac{1}{5} a_{1} a_{3}^6+\frac{a_{3}^6}{15}-\frac{3 a_{1}^5}{5} +\frac{2}{15} a_{1}^4 a_{3}^5-\frac{14}{15} a_{1}^3 a_{3}^5+\frac{31}{15} a_{1}^2 a_{3}^5-\frac{28}{15} a_{1} a_{3}^5+\frac{3 a_{3}^5}{5}\\
   \scriptstyle
   +\frac{55 a_{1}^4}{24}-\frac{2}{15} a_{1}^5 a_{3}^4+\frac{4}{3} a_{1}^4 a_{3}^4-\frac{41}{8} a_{1}^3 a_{3}^4+\frac{217}{24} a_{1}^2 a_{3}^4 -\frac{889}{120} a_{1} a_{3}^4+\frac{55 a_{3}^4}{24}-\frac{15 a_{1}^3}{4}+\frac{1}{15} a_{1}^6 a_{3}^3-\frac{14}{15} a_{1}^5 a_{3}^3+\frac{41}{8} a_{1}^4 a_{3}^3-\frac{43}{3} a_{1}^3 a_{3}^3+\frac{2437}{120} a_{1}^2 a_{3}^3\\
   \scriptstyle
   -\frac{839}{60} a_{1} a_{3}^3 +\frac{15 a_{3}^3}{4}+\frac{137 a_{1}^2}{120}+\frac{1}{5} a_{1}^6 a_{3}^2-\frac{31}{15} a_{1}^5 a_{3}^2+\frac{217}{24} a_{1}^4 a_{3}^2-\frac{2437}{120} a_{1}^3 a_{3}^2+\frac{1331}{60} a_{1}^2 a_{3}^2-\frac{1223}{120} a_{1} a_{3}^2+\frac{137 a_{3}^2}{120} -\frac{3 a_{1}}{20}+\frac{1}{5} a_{1}^6 a_{3}\\
   \scriptstyle -\frac{28}{15} a_{1}^5 a_{3}+\frac{889}{120} a_{1}^4 a_{3}-\frac{839}{60} a_{1}^3 a_{3}+\frac{1223}{120} a_{1}^2 a_{3}-\frac{21}{10} a_{1} a_{3}+\frac{3 a_{3}}{20}.
 \end{multline*}

\section*{Appendix B: Numerical Sequences}\label{sec:oeis}

\subsection{Generalized Dedekind Numbers}

From \Cref{sec:def}, $d_{k}(B_{n})$ for $0\leq k\leq n\leq 5$ are
given by

$$\begin{array}{c|cccccc}
   n \backslash k & 0 & 1 & 2 & 3 & 4 & 5  \\ \hline
   0 & 1 & 0 & 0 & 0 & 0 & 0 \\
   1 & 1 & 1 & 0 & 0 & 0 & 0 \\
   2 & 1 & 4 & 1 & 0 & 0 & 0 \\
   3 & 1 & 18 & 18 & 1 & 0 & 0 \\
   4 & 1 & 166 & 656 & 166 & 1 & 0 \\
   5 & 1 & 7579 & 189967 & 189967 & 7579 & 1 \end{array}
$$
Note OEIS A007153 appears in column 1.  Reading the triangle of
nonzero entries by rows from the top we have the sequence 1, 1, 1, 1, 4,
1, 1, 18, 18, 1, 1, 166, 656, 166, 1, 1, 7579, 189967, 189967, 7579, 1.
Note the symmetry naturally comes from complementing sets and values.
This sequence is \cite[A374819]{oeis}.



\subsection{Boolean-Chain numbers}

The square table of numbers $|\quilt(B_{n}, C_{k})|$ for $1\leq
n\leq 4$ and $1\leq k\leq 7$ plus for $n=5,k=1,2$ and $n=6,k=1$ are given by



$$\begin{array}{c|ccccccc}
   n \backslash k & 1 & 2 & 3 & 4 & 5 & 6 & 7  \\ \hline
                    1 & 1 & 2 & 3 & 4 & 5 & 6 & 7 \\
                    2 & 4 & 4 & 17 & 46 & 100 & 190 & 329 \\
                    3 & 18 & 199 & 199 & 3252 & 26741 & 151522 & 671600 \\
                    4 & 166 & 47000 & 3813042 & 3813042 & 722309495 & 52340356152 & 2061888381504\\
	              5 & 7579 & 410131245  & ?  &  ? & ? & ? & ?\\
   	              6 & 7828352  & ?  & ?  &  ? & ? & ? & ?
\end{array}$$
Reading antidiagonals starting at $k=n=1$, we have 1, 2, 4, 3, 4, 18,
4, 17, 199, 166, 5, 46, 199, 47000, 7579, 6, 100, 3252, 3813042, 410131245,
7828352.  This sequence is \cite[A374820]{oeis}.


\subsection{Antichain-Boolean numbers}

The table of $|\quilt(A_2(j), B_{n})|$ for $1\leq n\leq 4$ and
$1\leq j \leq 6$, plus $|\quilt(A_{2}(1), B_{5})|$ is given by

$$\begin{array}{c|cccccc}
   n \backslash j & 1 & 2 & 3 & 4 & 5 & 6\\ \hline
                1 & 2 & 4 & 8 & 16 & 32 & 64\\
2 & 4 & 16 & 64 & 256 & 1024 & 4096 \\
3 & 199 & 2309 & 28225 & 364217 & 4960009 & 71091689 \\
4 & 47000 & 4001278 & 384285926 & 40139162386 & 4455115959710 & 517943027803618 \\
5 & 410131245  & ?  & ? & ? & ? & ?\\
\end{array}$$
Reading antidiagonals starting at $k=n=1$, we have 2, 4, 4, 8, 16,
199, 16, 64, 2309, 47000, 32, 256, 28225, 4001278, 410131245, 64,
1024, 364217, 384285926.  This sequence is \cite[A374821]{oeis}.


\subsection{Antichain-Chain Quilt Numbers}

The number of quilts of type $(A_2(j), C_k)$, where $j$ is the
column index for $1\leq j\leq 8$ and $k$ is the row index for $1\leq
k\leq 8$ is given by the table 
\begin{small}
$$\begin{array}{c|cccccccc}
   k \backslash j & 1 & 2 & 3 & 4 & 5 & 6 & 7 & 8\\ \hline
1 & 2 & 4 & 8 & 16 & 32 & 64 & 128 & 256 \\
2 & 2 & 4 & 8 & 16 & 32 & 64 & 128 & 256 \\
3 & 7 & 17 & 43 & 113 & 307 & 857 & 2443 & 7073 \\
4 & 16 & 46 & 142 & 466 & 1606 & 5746 & 21142 & 79426 \\
5 & 30 & 100 & 366 & 1444 & 6030 & 26260 & 117966 & 542404 \\
6 & 50 & 190 & 806 & 3718 & 18230 & 93430 & 494726 & 2684998 \\
7 & 77 & 329 & 1589 & 8393 & 47237 & 278249 & 1695029 & 10592393 \\
8 & 112 & 532 & 2884 & 17164 & 109012 & 725212 & 4992484 & 35277004
\end{array}$$
\end{small}
Reading antidiagonals we have 2, 4, 2, 8, 4, 7, 16, 8, 17, 16, etc.  This sequence is \cite[A374822]{oeis}.


\subsection{Chain-Chain Numbers }

The numbers of quilts of type $(C_k, C_n)$ is also the number of
rectangular alternating sign matrices in $\ASM_{k,n}$. This sequence
is included \cite[A297622]{oeis}, where they include the cases where
$k=0$.    Note, $|\quilt(C_0,
C_n)|=1$ for all $n\geq 0$.  The numbers $|\quilt(C_k, C_n)|$
for $1\leq k\leq 10$ are



$$\begin{array}{c|cccccccccc}
   n \backslash k & 1 & 2 & 3 & 4 & 5 & 6 & 7 & 8 & 9 & 10 \\ \hline
1 & 1 & 2 & 3 & 4 & 5 & 6 & 7 & 8 & 9 & 10 \\
2 & {} & 2 & 7 & 16 & 30 & 50 & 77 & 112 & 156 & 210 \\
3 & {} & {} & 7 & 42 & 149 & 406 & 938 & 1932 & 3654 & 6468 \\
4 & {} & {} & {} & 42 & 429 & 2394 & 9698 & 31920 & 90576 & 229680 \\
5 & {} & {} & {} & {} & 429 & 7436 & 65910 & 403572 & 1931325 & 7722110 \\
6 & {} & {} & {} & {} & {} & 7436 & 218348 & 3096496 & 29020904 & 205140540 \\
7 & {} & {} & {} & {} & {} & {} & 218348 & 10850216 & 247587252 & 3586953760 \\
8 & {} & {} & {} & {} & {} & {} & {} & 10850216 & 911835460 & 33631201864 \\
\end{array}
$$
Reading down columns for the triangle of numbers we have 1, 2, 2, 3,
7, 7, 4, 16, 42, 42, 5, 30, 149, 429, 429, 6, 50, 406, 2394, 7436,
7436, \dots .  Note, \cite[A005130]{oeis} is the sequence counting the
number of square ASMs.  It starts out 1, 1, 2, 7, 42, 429, 7436,
218348, 10850216, as shown in the diagonal.


\subsection{Antichain-Antichain Quilt Numbers}

The quilts of type $(A_2(j), A_2(k))$ are in bijection with the
$j\times k$ binary arrays, so the formula is $2^{jk}$ for all $j,k\geq
1$.

\subsection{Boolean-Boolean Quilt Numbers}

The triangular array of the number of ASM quilts of type $(B_n,
B_{k})$ begins with

$$\begin{array}{c|ccccc}
   n \backslash k & 1 & 2 & 3  & 4   & 5 \\ \hline
                1 & 1 & 4 & 18 & 166 & 7579 \\
2& {} & 16 & 2309 & 4001278  & {?}\\
3& {} & {} & 2406862  & {?} & {?}\\
\end{array}
$$
Reading down columns we have 1, 4, 16, 18, 2309, 2406862, 166,
4001278. This sequence is \cite[A374824]{oeis}.

\section*{Funding}

M.~Konvalinka was partially supported by ERC AdG KARST and by projects and programs J1-2452, N1-0218, P1-0294, P1-0297 of the Slovenian Research Agency.

\section*{Acknowledgments}

We would like to thank Anders Claesson, Herman Chau, Ilse Fischer, Hans
H\"ongesberg, Nathan Reading, Stark Ryan, Raman Sanyal, Jessica Striker, and Joshua
Swanson for helpful conversations related to this
work.  We would also like to thank an anonymous referee for carefully reading the paper and for their thoughtful suggestions.

\bibliographystyle{amsalpha}
\newcommand{\etalchar}[1]{$^{#1}$}
\providecommand{\bysame}{\leavevmode\hbox to3em{\hrulefill}\thinspace}
\providecommand{\MR}{\relax\ifhmode\unskip\space\fi MR }
\providecommand{\MRhref}[2]{%
  \href{http://www.ams.org/mathscinet-getitem?mr=#1}{#2}
}
\providecommand{\href}[2]{#2}

\end{document}